\newcommand{\pa} {\partial}
\numberwithin{equation}{section}
\numberwithin{equation}{section}
\theoremstyle{definition}
\newtheorem{definition}{Definition}[section]
\theoremstyle{plain}
\newtheorem{theorem}[definition]{Theorem}
\newtheorem{remark}[definition]{Remark}
\newtheorem{lemma}[definition]{Lemma}
\newtheorem{proposition}[definition]{Proposition}
\newtheorem{corollary}[definition]{Corollary}
\newcommand{\mc}{\mathcal{}}
\newcommand{\tl}{\tilde}
\newcommand{\n}{\mathbb{N}}
\newcommand{\z}{\mathbb{Z}}
\newcommand{\ens}[1]{ \left\{#1\right\} }
\newcommand{\ip}[2]{\left<{#1},{#2}\right>}
\newcommand{\norm}[1]{\left\|#1\right\|}
\newcommand{\rea}{\mathbb{R}}
\newcommand{\ox}{\Omega_x}
\newcommand{\oy}{\Omega_y}
\newcommand{\N}{\mathbb{N}}
\newcommand \F{\mathcal S}
\newcommand\Y{\mathcal{Y}}
\newcommand\V{\mathcal{V}}
\date{\today}
\begin{document}
	\title[Null controllability Kuramoto-Sivashinsky equation in cylindrical domains]{%Controllability from the Boundary for the Kuramoto-Sivashinsky Equation on Multi-Dimensional Cylindrical Domains
	On the controllability of the Kuramoto-Sivashinsky Equation on Multi-Dimensional Cylindrical Domains}
\author[V\'ictor Hern\'andez-Santamar\'ia and Subrata Majumdar]{V\'ictor Hern\'andez-Santamar\'ia and Subrata Majumdar}

\thanks{\textit{Acknowledgements.}~This work has received support from UNAM-DGAPA-PAPIIT grant IN117525 (Mexico). V. Hern\'andez-Santamar\'ia is supported by the program ``Estancias Posdoctorales por México para la Formación y Consolidación de las y los Investigadores por México'' of CONAHCYT (Mexico). He also received support from Project CBF2023-2024-116 of CONAHCYT and by UNAM-DGAPA-PAPIIT grants IA100324 and IN102925 (Mexico). Subrata Majumdar is supported by the UNAM Postdoctoral Program (POSDOC)}
	\keywords{Boundary control, Kuramoto-Sivashinsky equation,  moment method, Lebeau-Robbiano strategy, biorthogonal family, null controllability,}
	\subjclass[2020]{93B05, 93C20, 35K25, 30E05}
	\begin{abstract}
		In this article, we investigate null controllability of the Kuramoto-Sivashinsky (KS) equation on a cylindrical domain $\Omega=\Omega_x\times \Omega_y$  in $\mathbb R^N$, where $\Omega_x=(0,a),$ $a>0$ and $\Omega_y$ is a smooth domain in $\mathbb R^{N-1}$. We first study the controllability of this  system by a control acting on $\{0\}\times \omega$, $\omega\subset \Omega_y$, through the boundary term associated with the Laplacian component. The null controllability of the linearized system is proved using a combination of two techniques: the method of moments and Lebeau-Robbiano strategy. We provide a necessary and sufficient condition for the null controllability of this system along with an explicit control cost estimate. Furthermore, we show that there exists minimal time $T_0(x_0)>0$ such that the system is null controllable for all time $T > T_0(x_0)$ by means of an interior control exerted on $\gamma = \{x_0\} \times \omega \subset \Omega$, where $x_0/a\in (0,1)\setminus \mathbb{Q}$ and it is not controllable if $T<T_0(x_0).$
		If we assume $x_0/a$ is an algebraic real number of order $d > 1$, then we prove the controllability for any time $T>0.$
		  Finally, for the case of $N=2 \text{ or } 3$, we show  the local null controllability of the main nonlinear system by employing the source term method followed by the Banach fixed point theorem. 
		\end{abstract}
	\maketitle
%	\tableofcontents
\section{Introduction and main results}

\subsection{Motivation}

The Kuramoto–Sivashinsky (KS) equation is a paradigmatic model in the study of nonlinear partial differential equations. Originally introduced in the 1970s by Kuramoto, Tsuzuki, and Sivashinsky, it models a range of physical phenomena including crystal growth \cite{KT75,KT76} and flame front instabilities \cite{Siv77}. The scalar form of the equation reads
\begin{equation}\label{eq:kse}
\partial_t u + \Delta^2 u + \nu \Delta u + \frac{1}{2} |\nabla u|^2 = 0,
\end{equation}
and is typically supplemented with appropriate boundary conditions (e.g., Dirichlet, Navier, or periodic) and an initial condition. Here, $\nu$ is a positive real parameter.

Over the past four decades, the KS equation has been the subject of extensive analytical study. In the one-dimensional setting, the equation is known to be globally well-posed (see, e.g., \cite{Tad86}), and many qualitative and quantitative properties have been established \cite{HN85,NST85,CEE93,Gru00,Goo94,GO05,Ott09,KTZ18}. In contrast, the analysis of \eqref{eq:kse} in two or more spatial dimensions remains challenging. A general global well-posedness result is still lacking, and only partial results are available. For example, local well-posedness in $L^p$ spaces has been investigated in \cite{BS07,IS16}, while global results are known only under restrictive assumptions, such as thin domains, anisotropic reductions, or for modified versions of the equation \cite{ST92,BKR14,LK20,KM23,CDF21,FM22,ELW24}.

Motivated by the importance of understanding qualitative properties of higher-dimensional models, the goal of this paper is to study the controllability of the KS equation in spatial dimensions two and higher. Our contribution is twofold: on the one hand, we extend known results from the one-dimensional setting to higher dimensions; on the other hand, we improve and generalize some of the (few) existing controllability results available in dimensions two or more.

\subsection{Problem formulation}

To set the framework, let us consider equation \eqref{eq:kse} posed on a bounded domain $\Omega \subset \mathbb{R}^N$ with $N \geq 2$ (to be specified later), and impose Navier-type boundary conditions. In more detail, let $T>0$ and consider the system
\begin{equation}\label{NKS_cyl}
	\begin{cases}
		\partial_t u+\Delta^2 u+\nu\Delta u+\frac{1}{2}|\nabla u|^2=0 &  \text{ in } (0,T)\times \Omega,\\
		u=0, \quad \Delta u=\mathbf{1}_{\Gamma}q & \text{ on } (0,T)\times \partial \Omega, \\
		u(0)=u_0 & \text{ in } \Omega,
	\end{cases}
\end{equation}
where $\Gamma$ is a nonempty, open subset of $\partial \Omega$, $\mathbf{1}_{\Gamma}$ denotes its indicator function, and $u_0\in L^2(\Omega)$ is a given initial data. The function $q=q(t,x)$ will act as a control applied only on the subset $\Gamma$.

The classical objective in controllability theory is to determine whether there exists a control $q$ such that the solution of system \eqref{NKS_cyl} reaches a desired final state. In our case, we are interested in \emph{null controllability}, that is, whether one can steer the solution to rest at a given final time $T>0$. In other words, if there is a control $q$ such that $u(T,\cdot)=0$ in $\Omega$. 

%\textit{Control exerted on the boundary.} 
Let us consider domains $\Omega$ of the form $\Omega=\Omega_x\times \Omega_y$ in $\mathbb R^{N}$, $N\geq2$, where $\Omega_x=(0,a)$, and $\Omega_y$ is a bounded smooth domain in $\mathbb R^{N-1}$. Let $\Gamma=\{0\}\times \omega$ with $\omega\subset \Omega_y$ an open subset (see \Cref{fig:cyl_domain}). 

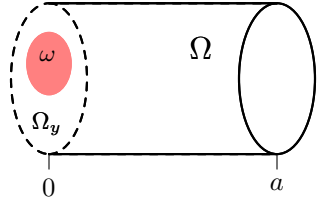
\begin{figure}[htbp!]
	\centering
	\begin{tikzpicture}[line cap=round,line join=round,scale=1]
		% dimensiones
		\def\a{1} % radio del cilindro
		\def\h{0} % altura del cilindro
		\def\z{3} % distancia de la base al eje vertical
		
		% Cilindro
		\foreach\i in {0,\h} % dibujamos el cilindro en la posición 0 y \h
		{%
			\draw[thick,dashed] (\z,\i+\a) -- (0,\i+\a) arc (90:270:0.5*\a cm and \a cm) -- (\z,\i-\a) ;
			\draw[thick, dashed] (0,\i-\a) arc (-90:90:0.5*\a cm and \a cm);
			\draw[thick] (\z,\i) ellipse (0.5*\a cm and \a cm);
			\draw[thick] (\z,\i+\a) -- (0,\i+\a);
			
			\draw[fill=red!50,draw=none] (0,0.2) ellipse (0.30*\a cm and 0.42\a cm);
			
			\draw[thick] (0,-1) -- (3,-1);
			\node at (2.0, 0.4) {\Large$\Omega$};
			\node[right] at (-0.25,0.3) {\small $\omega$};
			\node[right] at (-0.35,-0.6) {\small $\Omega_y$}; 
		}

		% Líneas de los puntos
		\draw[] (0,-\a) -- (0,-1.2) node [below] {$0$};
		\draw[] (\z,-\a) -- (\z,-1.2) node [below] {$a$};
	\end{tikzpicture}  
	\caption{Example of the domain $\Omega$ for equation \eqref{NKS_cyl} with $N=3$. The red region, denoted by $\omega$, represents the control set.}
	\label{fig:cyl_domain}
\end{figure}

To state our results, we begin by fixing some notation. Let $\{\mu_j^{\Omega_y},\Psi_j^{\Omega_y}\}$ denote the eigen-pairs of the following Dirichlet Laplacian eigenvalue problem
\begin{equation}\label{eg_value}
-\Delta \Psi = \mu \Psi \;\;\text{in } \Omega_y, \quad \quad \Psi=0 \;\; \text{in } \partial \Omega_y.
\end{equation}
Based on these eigenvalues, we define the following countable set of critical values
\begin{equation}\label{critical value_cyl1}
		\mathcal{N}=\bigg\{2\mu^{\oy}_j+\pi^2\left(\frac{k^2+l^2}{a^2}\right): k,l,j \in \N, \;\; k\neq l\bigg\}.
	\end{equation}
We are now in position to state our main results. 
 Let us first consider the following linearized equation of \eqref{NKS_cyl}
\begin{equation}\label{lin_NKS_cyl}
	\begin{cases}
		\partial_t u+\Delta^2 u+\nu\Delta u=0 &  \text{ in } (0,T)\times \Omega,\\
		u=0, \quad \Delta u=\mathbf{1}_{\Gamma}q & \text{ on } (0,T)\times \partial \Omega, \\
		u(0)=u_0 & \text{ in } \Omega.
	\end{cases}
\end{equation}

\begin{theorem}\label{Ln_KSE}
	Assume that $N\geq 2$ and $T>0$. For any $u_0\in L^2(\Omega)$, there is $q \in L^2(0,T; L^2(\Gamma))$ such that system \eqref{lin_NKS_cyl} satisfies $ u(T,\cdot) = 0$ in $\Omega$ if and only if $\nu\notin \mathcal N.$ Moreover, the control satisfies the following estimate
	\begin{equation*}
	\|q\|_{L^2(0,T;L^2(\Gamma))}\leq Ce^{C/T}\|u_0\|_{L^2(\Omega)}
	\end{equation*}
for some positive constant $C$ independent of $T$ and $u_0$.

	%The linear KSE equation \eqref{NKS_cyl} is null controllable for any $T>0$ if and only if $\nu\notin\mathcal N$.
	%
\end{theorem}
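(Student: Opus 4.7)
My strategy is to diagonalize the system along $\oy$, reduce to a family of one-dimensional boundary-control problems, solve each via the moment method, and glue the pieces back together with a Lebeau-Robbiano time-splitting argument.

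First, I would expand state and control in the basis $\{\Psi_j^\oy\}_{j\in\N}$: writing $u(t,x,y)=\sum_j u_j(t,x)\Psi_j^\oy(y)$ and $v_j(t):=\int_\omega q(t,y)\Psi_j^\oy(y)\,dy$, the identity $\Delta=\pa_x^2-\mu_j^\oy$ acting mode-by-mode reduces \eqref{lin_NKS_cyl} to the one-dimensional problems
\begin{equation*}
\pa_t u_j+(\pa_x^2-\mu_j^\oy)^2 u_j+\nu(\pa_x^2-\mu_j^\oy)u_j=0,\quad x\in(0,a),
\end{equation*}
with $u_j=0$ at $x\in\{0,a\}$, $\pa_{xx}u_j=0$ at $x=a$ and $\pa_{xx}u_j(\cdot,0)=v_j$. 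A direct spectral computation gives eigenvalues $\lambda_{j,k}=\zeta_{j,k}(\zeta_{j,k}-\nu)$ with $\zeta_{j,k}=(k\pi/a)^2+\mu_j^\oy$ and eigenfunctions $\sin(k\pi x/a)$, $k\in\N$.

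For the necessity, the relation $\lambda_{j,k}=\lambda_{j,l}$ with $k\neq l$ is equivalent to $\nu=\zeta_{j,k}+\zeta_{j,l}=2\mu_j^\oy+\pi^2(k^2+l^2)/a^2$, that is $\nu\in\mathcal N$. When this happens, the two associated eigenfunctions $\sin(k\pi x/a)\Psi_j^\oy$ and $\sin(l\pi x/a)\Psi_j^\oy$ of the adjoint share an eigenvalue, and their normal derivatives at $\{0\}\times\omega$ (which is the relevant observation operator for a Laplacian-type boundary control) are $-(k\pi/a)\Psi_j^\oy$ and $-(l\pi/a)\Psi_j^\oy$, hence linearly dependent on $\omega$. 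A standard Fattorini-Hautus argument then rules out null controllability. Note that cross-$j$ degeneracies are harmless because the $\Psi_j^\oy$'s are orthogonal.

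For the sufficiency, assume $\nu\notin\mathcal N$. For each fixed $j$ the sequence $\{\lambda_{j,k}\}_{k\in\N}$ is simple, satisfies $\lambda_{j,k}\sim(k\pi/a)^4$ and has a spectral gap inherited from the distance of $\nu$ to $\mathcal N$. A Fattorini-Russell style complex-analysis construction then yields a biorthogonal family $\{\psi_{j,k}\}\subset L^2(0,T)$ to $\{e^{-\lambda_{j,k}(T-\cdot)}\}$ with norms bounded by $\|\psi_{j,k}\|_{L^2(0,T)}\leq Ce^{C/T+C\sqrt{\mu_j^\oy}}$, the extra $\sqrt{\mu_j^\oy}$ term reflecting the $j$-dependent shift of the one-dimensional spectrum. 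Solving the resulting moment problem produces a control $v_j$ with $\|v_j\|_{L^2(0,T)}\leq Ce^{C/T+C\sqrt{\mu_j^\oy}}\|u_{0,j}\|_{L^2(\ox)}$.

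To close, I would implement the Lebeau-Robbiano scheme: split $[0,T]$ into a dyadic sequence of intervals, activate on the $n$-th interval only the modes with $\mu_j^\oy\leq\Lambda_n$ (for a geometric sequence $\Lambda_n$) using the moment-method control of the previous step, and let the remaining modes decay freely at rate $e^{-c\mu_j^2 t}$ in the waiting periods. Since the cost $e^{C\sqrt{\Lambda_n}}$ is dominated by the dissipation $e^{-c\Lambda_n t}$, a telescoping argument delivers $q(t,y)=\sum_j v_j(t)\Psi_j^\oy(y)$ together with the estimate $\|q\|_{L^2(0,T;L^2(\Gamma))}\leq Ce^{C/T}\|u_0\|_{L^2(\Omega)}$. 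The most delicate step is the biorthogonal estimate: constructing entire functions with prescribed zeros at $\{\lambda_{j,k}\}_k$ and with the \emph{uniform} dependence on $\mu_j^\oy$ needed for the Lebeau-Robbiano iteration to close is the real technical obstacle.
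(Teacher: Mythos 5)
Your overall architecture (Fourier decomposition in $y$, 1-D moment method, Lebeau--Robbiano time splitting) is the same as the paper's, and your necessity argument is fine -- in fact slightly more direct than the paper's: when $\nu\in\mathcal N$ the two adjoint eigenfunctions $\sin(k\pi x/a)\Psi_j^{\Omega_y}$ and $\sin(l\pi x/a)\Psi_j^{\Omega_y}$ share an eigenvalue and have proportional observations $\tfrac{k\pi}{a}\Psi_j^{\Omega_y}|_\omega$, $\tfrac{l\pi}{a}\Psi_j^{\Omega_y}|_\omega$, so a suitable combination is a nonzero adjoint eigenfunction with vanishing observation, which directly destroys the observability inequality equivalent to null controllability (the paper instead passes through approximate controllability of the 1-D system and a density-of-reachable-states argument).

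The genuine gap is in the sufficiency, at the synthesis step. Your reduction defines $v_j(t)=\int_\omega q(t,y)\Psi_j^{\Omega_y}(y)\,dy$, which is correct for decomposing the \emph{dynamics}, but when you then "solve each moment problem" you are implicitly prescribing the $v_j$ independently. Since all the $v_j$ are moments of the single function $\mathbf 1_\omega q(t,\cdot)$, this is only legitimate when $\omega=\Omega_y$; for $\omega\subsetneq\Omega_y$ (the setting of the theorem, $\Gamma=\{0\}\times\omega$) the indicator destroys the orthogonality and the 1-D problems remain coupled through the support constraint on $q$. The missing ingredient is precisely the Lebeau--Robbiano spectral inequality for the Dirichlet Laplacian on $\Omega_y$ observed on $\omega$ (Theorem \ref{thm:spec_ineq_degen}), which the paper combines with the 1-D observability to get the partial observability of \Cref{par obs_cyl}, $\|\Pi_{E_J}\sigma(0)\|^2\le C(C^{\Omega_x}_{T,J})^2e^{C\sqrt{\mu_J^{\Omega_y}}}\int_0^T\int_\omega|\partial_x\sigma(t,0,y)|^2$, and then, by duality, a control supported in $\{0\}\times\omega$ killing all $y$-modes with $\mu_j^{\Omega_y}\le\mu_J$ at cost $e^{C\sqrt{\mu_J}}$. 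The factor $e^{C\sqrt{\Lambda_n}}$ you carry into the iteration is exactly what this inequality provides, but in your sketch you attribute it to the $j$-shift of the 1-D spectrum and give no mechanism for localizing the control in $\omega$; without this the LR scheme cannot be closed with $q\in L^2(0,T;L^2(\Gamma))$. A secondary, fixable inaccuracy: the per-element biorthogonal bound is $\le Ke^{K(-\lambda_{j,k})^{1/4}+KT^{-1/3}}$ with $(-\lambda_{j,k})^{1/4}\lesssim k+\mu_j^{1/4}\sqrt{k}$, not $e^{C/T+C\sqrt{\mu_j}}$ uniformly in $k$; the form you state is recovered only after multiplying by $e^{\lambda_{j,k}T}$, applying Young's inequality and summing in $k$ (the paper performs this bookkeeping, ending with a cost of the form $Ce^{Cj^{1/(N-1)}/T}$), and one must also check the $j$-uniform gap and counting-function hypotheses, including the finitely many modes with $2\mu_j^{\Omega_y}-\nu<0$.
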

From \Cref{Ln_KSE} and a general method to deal with the controllability of nonlinear parabolic systems, we
deduce the local null controllability of the system \eqref{NKS_cyl}:
\begin{theorem}\label{th_nl_KS} 
Assume that $N=2$ or $3$. If $\nu\notin \mathcal N$, then the KS equation \eqref{NKS_cyl} is locally null controllable for any $T>0$, that is, there exists $R>0$ such that  for any $u_0\in L^2(\Omega)$ satisfying $\norm{u_0}_{L^2(\Omega)}\leq R$, there is $q \in L^2(0,T; L^2(\Gamma))$ such that system \eqref{NKS_cyl} satisfies $ u(T,\cdot) = 0$ in $\Omega$.
\end{theorem}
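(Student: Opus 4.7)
The plan is to apply the classical source term method (à la Liu--Takahashi--Tucsnak) combined with a Banach fixed point argument, as indicated in the statement. Introduce two weight functions of the form $\rho_0(t) = \exp(-A_0/(T-t))$ and $\rho_F(t) = \exp(-A_F/(T-t))$ with $A_0 > A_F > 0$ tuned so that the singular cost $e^{C/(T-t)}$ furnished by \Cref{Ln_KSE} can be absorbed. Applying this cost iteratively on a dyadic sequence of time intervals accumulating at $T$, one builds, for every $u_0 \in L^2(\Omega)$ and every source $f$ with $\rho_F^{-1} f \in L^2((0,T); L^2(\Omega))$, a control $q$ and a solution $u$ of the \emph{linear} boundary-controlled problem with source $f$ such that $u(T) = 0$ and
\begin{equation*}
\|\rho_0^{-1} u\|_{\mathcal{Y}} + \|\rho_F^{-1} q\|_{L^2((0,T); L^2(\Gamma))} \le C \left( \|u_0\|_{L^2(\Omega)} + \|\rho_F^{-1} f\|_{L^2((0,T); L^2(\Omega))} \right),
\end{equation*}
where $\mathcal{Y}$ is a parabolic energy space continuously embedded in $L^\infty(0,T; L^2(\Omega)) \cap L^2(0,T; H^2(\Omega))$.

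Next, I would set up the nonlinear fixed point map $\Lambda \colon u \mapsto \widetilde u$, where $\widetilde u$ is the solution above associated to the source $f(u) = -\tfrac{1}{2} |\nabla u|^2$. The crux is the quadratic estimate
\begin{equation*}
\left\| \rho_F^{-1} |\nabla u|^2 \right\|_{L^2((0,T); L^2(\Omega))} \le C \left\| \rho_0^{-1} u \right\|_{\mathcal{Y}}^{2},
\end{equation*}
together with its Lipschitz counterpart, obtained via the identity $|\nabla u|^2 - |\nabla v|^2 = (\nabla u - \nabla v) \cdot \nabla(u+v)$. In dimensions $N=2$ and $N=3$, this estimate follows by combining parabolic interpolation between $L^\infty(0,T; L^2(\Omega))$ and $L^2(0,T; H^2(\Omega))$ with the Sobolev embedding $H^2(\Omega) \hookrightarrow W^{1,4}(\Omega)$, which is valid precisely for $N \le 3$, provided the weights $\rho_0, \rho_F$ are matched so that $\rho_F \rho_0^{-2}$ remains bounded on $(0,T)$.

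With both the linear source-term estimate and the quadratic nonlinear estimate in hand, restricting $\Lambda$ to a small ball $B_R$ in the weighted space $\{u : \rho_0^{-1} u \in \mathcal{Y}\}$ and choosing $\|u_0\|_{L^2(\Omega)}$ sufficiently small compared to $R$ makes $\Lambda$ a self-map and a contraction; Banach's fixed point theorem then yields a solution $u$ of the nonlinear boundary-controlled problem satisfying $u(T) = 0$, which is the asserted local null controllability. The principal obstacle I foresee is the careful calibration of the weights: the exponents $A_0, A_F$ must simultaneously dominate the singular control cost from \Cref{Ln_KSE} and preserve the quadratic estimate on $|\nabla u|^2$ in the weighted spaces. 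The dimension constraint $N \le 3$ enters precisely through this estimate, since for $N \ge 4$ the embedding $H^2 \hookrightarrow W^{1,4}$ fails and additional regularity of the controlled trajectory would be required.
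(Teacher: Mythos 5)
Your overall strategy (source term method with weights blowing up at $t=T$, then a Banach fixed point) is the same as the paper's, but two of your key calibration claims are wrong as stated. First, the weight ordering: in the weighted linear result the \emph{source} weight must decay faster than the \emph{state} weight (with enough margin to absorb the cost $e^{C/(T-t)}$ from \Cref{Ln_KSE}), since on each dyadic interval the residual produced by the source must be killed at a price that blows up; this is exactly what the paper's choice \eqref{def_weight_func} with \eqref{choice-p_q} achieves, giving $\rho_0^2\lesssim\rho_{\F}\lesssim\rho_0$ near $T$. You instead impose $A_0>A_F$ (state weight decays faster than source weight), which makes the analogue of \Cref{Proposition-weighted} unattainable, and then require ``$\rho_F\rho_0^{-2}$ bounded'', i.e.\ $A_F\geq 2A_0$, which both contradicts $A_0>A_F$ and is the reverse of what the quadratic estimate needs (one needs $\rho_0^2/\rho_F$ bounded, so that $\rho_F^{-1}|\nabla u|^2=(\rho_0^2/\rho_F)\,|\nabla(\rho_0^{-1}u)|^2$ is controlled). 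The two conditions you state cannot hold simultaneously, and each is oriented the wrong way for its purpose.

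Second, and more substantively, your quadratic estimate $\|\rho_F^{-1}|\nabla u|^2\|_{L^2(0,T;L^2(\Omega))}\leq C\|\rho_0^{-1}u\|_{\mathcal Y}^2$ does not follow from the regularity you have, namely $L^\infty(0,T;L^2(\Omega))\cap L^2(0,T;H^2(\Omega))$: placing $|\nabla u|^2$ in $L^2_tL^2_x$ amounts to $\nabla u\in L^4_tL^4_x$, and a Gagliardo--Nirenberg/exponent count (or a parabolic scaling argument) shows this is not controlled by those two norms in dimension $2$ or $3$ — interpolation only gives, e.g., $u\in L^4_tH^1_x$, which is insufficient, and no choice of weights repairs a space-time integrability deficit. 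The paper circumvents precisely this by measuring the source in the weaker space $L^2(0,T;\mathcal H'(\Omega))$, where the duality estimate $\||\nabla u|^2\|_{\mathcal H'(\Omega)}\leq C\|u\|_{H^1(\Omega)}^2\leq C\|u\|_{L^2(\Omega)}\|u\|_{\mathcal H(\Omega)}$ (using $\mathcal H(\Omega)\hookrightarrow L^\infty(\Omega)$, valid for $N\leq3$) matches exactly the $L^\infty_tL^2_x$ and $L^2_t\mathcal H$ regularity of the controlled trajectory; this is where the dimensional restriction really enters. To make your argument work you would either have to adopt this $\mathcal H'$-valued formulation of the source space (and run the fixed point on $f$, as the paper does), or prove additional parabolic regularity for the controlled solution, which your proposal does not provide.
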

\begin{remark}
Note that our main result for the linear system holds under a necessary and sufficient condition on the parameter $\nu$, whereas for the nonlinear problem, this condition is only sufficient. Nevertheless, the nonlinear equation \eqref{NKS_cyl} may still be null controllable even when $\nu \in \mathcal N$, which presents an interesting question. For a similar problem in the case of the Korteweg–de Vries equation, see \cite{C07} and \cite{CC09}.
\end{remark}

%\textit{Control exerted on interior.} 
We can also extend our control results in a different setting, more precisely when control is acting in the interior of the domain. Let $\Omega$ be the domain as defined above and we assume $\gamma=\{x_0\}\times \omega,$ where $x_0\in (0,a)$, $\omega\subset \Omega_y,$ and we consider the following interior control problem
\begin{equation}\label{int}
	\begin{cases}
		\partial_t u+\Delta^2 u+\nu\Delta u=\delta_{x_0}\mathbf{1}_{\omega}(y) h(t,y) &  \text{ in } (0,T)\times \Omega,\\
		u=0, \quad \Delta u=0 & \text{ on } (0,T)\times \partial \Omega, \\
		u(0)=u_0 & \text{ in } \Omega.
	\end{cases}
\end{equation}
We prove the controllability of the system \eqref{int} with $u_0\in L^2(\Omega)$ by means of a control  $h \in L^2(0,T;L^2(\Omega_y))$ for two cases, when $\omega=\Omega_y$ and $\omega\subsetneq \Omega_y,$ see \Cref{fig:cyl_domain4} and \Cref{fig:cyl_domain3}.
\begin{figure}[htbp!]
	\centering
	\begin{tikzpicture}[line cap=round,line join=round,scale=1]
		% dimensiones
		\def\a{1} % radio del cilindro
		\def\h{0} % altura del cilindro
		\def\z{3}
		\def\n{2} % distancia de la base al eje vertical
		
		% Cilindro
		\foreach\i in {0,\h} % dibujamos el cilindro en la posición 0 y \h
		{%
			\draw[thick,dashed] (\z,\i+\a) -- (0,\i+\a) arc (90:270:0.5*\a cm and \a cm) -- (\z,\i-\a) ;
			\draw[thick, dashed] (0,\i-\a) arc (-90:90:0.5*\a cm and \a cm);
			\draw[thick] (\z,\i) ellipse (0.5*\a cm and \a cm);
			\draw[thick] (\z,\i+\a) -- (0,\i+\a);
			
			\draw[fill=red!50,draw=none] (1.5,0.0) ellipse (0.30*\a cm and \a cm);
			
			\draw[thick] (0,-1) -- (3,-1);
			\node at (2.0, 0.7) {\Large$\Omega$};
			%\node[right] at (0.7,-0.5) {\small ${\color{red}\{x_0\}\times\omega}$};
			\node[right] at (-0.35,-0.6) {\small $\Omega_y$}; 
		}

		% Líneas de los puntos
		\draw[] (0,-\a) -- (0,-1.2) node [below] {$0$};
		\draw[] (\z,-\a) -- (\z,-1.2) node [below] {$a$};
		%\draw[] (1.5,-1.2) circle (1pt) node [below] {$x_0$};
		\fill[black] (1.5,-1) circle (1pt) node[below] {$x_0$};
	\end{tikzpicture}  
	\caption{ $\omega=\Omega_y$.}
	\label{fig:cyl_domain4}
\end{figure}
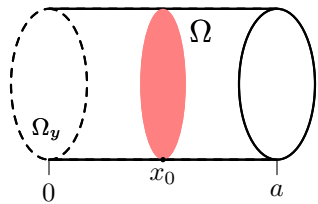
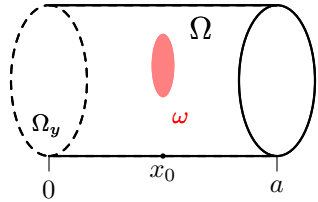
\begin{figure}[htbp!]
	\centering
	\begin{tikzpicture}[line cap=round,line join=round,scale=1]
		% dimensiones
		\def\a{1} % radio del cilindro
		\def\h{0} % altura del cilindro
		\def\z{3}
		\def\n{2} % distancia de la base al eje vertical
		
		% Cilindro
		\foreach\i in {0,\h} % dibujamos el cilindro en la posición 0 y \h
		{%
			\draw[thick,dashed] (\z,\i+\a) -- (0,\i+\a) arc (90:270:0.5*\a cm and \a cm) -- (\z,\i-\a) ;
			\draw[thick, dashed] (0,\i-\a) arc (-90:90:0.5*\a cm and \a cm);
			\draw[thick] (\z,\i) ellipse (0.5*\a cm and \a cm);
			\draw[thick] (\z,\i+\a) -- (0,\i+\a);
			
			\draw[fill=red!50,draw=none] (1.5,0.2) ellipse (0.15*\a cm and 0.42\a cm);
			
			\draw[thick] (0,-1) -- (3,-1);
			\node at (2.0, 0.7) {\Large$\Omega$};
			\node[right] at (1.5,-0.5) {\small ${\color{red}\omega}$};
			\node[right] at (-0.35,-0.6) {\small $\Omega_y$}; 
		}

		% Líneas de los puntos
		\draw[] (0,-\a) -- (0,-1.2) node [below] {$0$};
		\draw[] (\z,-\a) -- (\z,-1.2) node [below] {$a$};
		%\draw[] (1.5,-1.2) node [below] {$x_0$};
		\fill[black] (1.5,-1) circle (1pt) node[below] {$x_0$};
	\end{tikzpicture}  
	\caption{$\omega\subsetneq\Omega_y$}
	\label{fig:cyl_domain3}
\end{figure}
 Let us state the following results:
\begin{theorem}\label{thm_int}
	Let $T>0$, $u_0\in L^2(\Omega)$ and $\nu\notin \mathcal{N}.$ Also assume  $x_0/a\in (0,1)\setminus \mathbb{Q}$. Then, we have the following:
	\begin{itemize}
	\label{full}\item[1.] Let $\omega=\Omega_y.$ Denote 
		\begin{align}\label{T_0}
		[0,+\infty]\ni T_0(x_0) := \limsup\limits_{k \to +\infty} \frac{-\log\left( |
			\sin\left(\frac{k\pi x_0}{a} \right)| \right)}{\frac{k^4\pi^4}{a^4}}.
		\end{align}
	Then system \eqref{int} is null controllable in time $T>T(x_0)$ and not null controllable in time $T<T(x_0).$
	
\label{local}\item[2.] Let $x_0/a$ be an algebraic real number of order $d>1$ and $\omega\subsetneq \Omega_y.$ Then system \eqref{int} is null controllable for all $T>0.$
	\end{itemize}
\end{theorem}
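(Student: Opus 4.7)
The plan is to approach both parts of Theorem~\ref{thm_int} through the $y$-Fourier expansion
\begin{equation*}
u(t,x,y)=\sum_{j\geq1}u_j(t,x)\,\Psi_j^{\Omega_y}(y),
\end{equation*}
and similarly for $u_0$ and $h$, using the Dirichlet eigenpairs from \eqref{eg_value}. A direct computation shows that each mode $u_j$ satisfies the one-dimensional fourth-order problem
\begin{equation*}
\partial_t u_j+\bigl(\partial_x^2-\mu_j^{\Omega_y}\bigr)^{2}u_j+\nu\bigl(\partial_x^2-\mu_j^{\Omega_y}\bigr)u_j=\delta_{x_0}\,H_j(t)\quad\text{on }(0,a),
\end{equation*}
with Navier boundary conditions $u_j=\partial_x^2 u_j=0$ at $x\in\{0,a\}$, where $H_j(t):=\int_{\omega}h(t,y)\,\Psi_j^{\Omega_y}(y)\,dy$. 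The eigenvalues $\Lambda_{j,k}=\lambda_{j,k}(\lambda_{j,k}-\nu)$ with $\lambda_{j,k}=\mu_j^{\Omega_y}+k^2\pi^2/a^2$ are pairwise distinct in $k$ for each fixed $j$ precisely when $\nu\notin\mathcal{N}$, and the eigenfunctions $\phi_k(x)=\sin(k\pi x/a)$ satisfy $\phi_k(x_0)\neq0$ for all $k\geq1$ exactly when $x_0/a\notin\mathbb{Q}$.

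For Part~1, with $\omega=\Omega_y$, one has $H_j(t)=h_j(t)$, so the problem decouples. Null-controllability of each $u_j$ reduces to the moment problem
\begin{equation*}
\sin\bigl(\tfrac{k\pi x_0}{a}\bigr)\int_0^T e^{-\Lambda_{j,k}(T-t)}h_j(t)\,dt=-e^{-\Lambda_{j,k}T}\langle u_{0,j},\phi_k\rangle,\qquad k\geq1.
\end{equation*}
Using the biorthogonal family $\{q_{j,k}\}_{k}$ to $\{e^{-\Lambda_{j,k}(T-\cdot)}\}_{k}$ in $L^2(0,T)$ constructed in the proof of Theorem~\ref{Ln_KSE} (with sub-exponential bound $\|q_{j,k}\|_{L^2(0,T)}\leq C(T)\,e^{\eta\sqrt{\Lambda_{j,k}}}$ for arbitrarily small $\eta>0$), I would formally set
\begin{equation*}
h_j(t)=-\sum_{k\geq1}\frac{e^{-\Lambda_{j,k}T}}{\sin(k\pi x_0/a)}\,\langle u_{0,j},\phi_k\rangle\,q_{j,k}(t),
\end{equation*}
and square-sum over $(j,k)$. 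By Parseval, the $L^2$-summability of the resulting $h$ reduces to the boundedness of $e^{-2\Lambda_{j,k}T}/|\sin(k\pi x_0/a)|^2$; the very definition \eqref{T_0} of $T_0(x_0)$ guarantees this exactly when $T>T_0(x_0)$. For the negative direction at $T<T_0(x_0)$, I would test the dual observability inequality against the eigenfunctions $\phi_{k_n}\Psi_1^{\Omega_y}$ with $k_n$ realizing the $\limsup$ in \eqref{T_0}, and show that no uniform observability constant can exist, ruling out null controllability.

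For Part~2, with $\omega\subsetneq\Omega_y$ and $x_0/a$ algebraic of order $d>1$, the $y$-direction no longer decouples because $H_j(t)$ now depends on every $\Psi_j^{\Omega_y}$-coefficient of $\mathbf 1_{\omega}h$. My plan is to use a Lebeau--Robbiano telescoping scheme: partition $[0,T]=\bigcup_n[T_n,T_{n+1}]$ with $T_n\nearrow T$ and dyadic $y$-spectral cut-offs $\Lambda_n\nearrow\infty$, and on each interval combine the Lebeau--Robbiano spectral inequality $\|\psi\|_{L^2(\Omega_y)}\leq Ce^{C\sqrt{\Lambda}}\|\psi\|_{L^2(\omega)}$ (valid on $\mathrm{span}\{\Psi_j^{\Omega_y}:\mu_j^{\Omega_y}\leq\Lambda\}$) with the Part~1 one-dimensional moment method applied to each retained mode $j$. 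Dualizing the spectral inequality via HUM produces, for every prescribed tuple $(H_j)_{\mu_j^{\Omega_y}\leq\Lambda_n}$, a control $h\in L^2(\omega)$ of cost $e^{C\sqrt{\Lambda_n}}$ realizing that tuple; the 1D moment method then supplies the target $H_j(t)$. The algebraic hypothesis enters via Liouville's inequality $|\sin(k\pi x_0/a)|\geq c\,k^{-(d-1)}$, which forces $T_0(x_0)=0$ and makes the 1D cost polynomial in $k$ and exponential only in $(T_{n+1}-T_n)^{-1}$. The high-frequency part $(I-\pi_{\Lambda_n})u$, left uncontrolled on $[T_n,T_{n+1}]$, dissipates at rate $\geq e^{-c\Lambda_n^2(T_{n+1}-T_n)}$ thanks to the biharmonic leading order, so a compatible dyadic choice of $\Lambda_n$ and $T_{n+1}-T_n$ yields a telescoping control that is $L^2$-summable in time.

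\textbf{Main obstacle.} The delicate point lies in Part~2: the scheme requires a joint calibration, on each sub-interval, of the Lebeau--Robbiano cost $e^{C\sqrt{\Lambda_n}}$ in the $y$-variable, the 1D biorthogonal/moment cost $\sim e^{C/(T_{n+1}-T_n)}$ in the $x$-variable (finite only because the algebraic condition yields $T_0(x_0)=0$), and the dissipation factor $e^{-c\Lambda_n^2(T_{n+1}-T_n)}$ coming from the biharmonic part. The $\Lambda^2$ rate of dissipation leaves room to absorb both control costs, but verifying that the resulting construction produces an admissible $h\in L^2(0,T;L^2(\Omega_y))$ demands careful bookkeeping on the telescoping series, and a robust observability estimate linking the HUM-type $\omega$-control to the effective $y$-mode coefficients.
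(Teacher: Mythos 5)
Your proposal is correct and follows essentially the same route as the paper: Fourier decomposition in the $y$-variable, a one-dimensional pointwise-control moment problem with biorthogonal families whose solvability threshold is exactly $T_0(x_0)$ (the negative part obtained by testing the observability inequality along the subsequence realizing the $\limsup$), and, for part 2, Liouville's inequality forcing $T_0(x_0)=0$ combined with the Lebeau--Robbiano scheme built on the spectral inequality in $\omega\subset\Omega_y$ and the quadratic-in-$\mu$ dissipation of the high $y$-modes. The calibration you flag as the main obstacle is precisely the bookkeeping already carried out in the paper's proof of \Cref{Ln_KSE} (uniform-in-$j$ one-dimensional cost, spectral-inequality cost $e^{C\sqrt{\mu}}$, dissipation $e^{-c\mu^2 t}$) and reused for this theorem, so no genuinely new difficulty arises.
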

\begin{remark}
For case 1, the conditions $\nu \notin \mathcal{N}$ and $x_0/a\in (0,1)\setminus \mathbb{Q}$ are also necessary to have the null controllability at time $T>T_0(x_0),$ see \Cref{nec_int} and the proof of \Cref{thm_int} for details.
\end{remark}

\subsection{Bibliographic comments on control problems for the KS equation in higher dimensions}\label{sec_tak}
Let us discuss some of the most relevant control problems for fourth-order parabolic equations in dimension $N \geq 2$. In \cite{GK19}, the authors considered the following problem
\begin{equation}\label{GK_KS}
	\begin{cases}
		\partial_t u+\Delta^2 u=\chi_{\omega}v &  \text{ in } (0,T)\times \Omega,\\
		u=0, \quad \Delta u=0 & \text{ on } (0,T)\times \partial \Omega, \\
		u(0)=u_0 & \text{ in } \Omega,
	\end{cases}
\end{equation}
where $\Omega$ is a smooth bounded domain in $\rea^N$.
They studied Carleman estimates for the corresponding adjoint system, using an arbitrarily small open set $\omega$ as the observation region. This analysis led to an interior null controllability result. By employing classical results (see pages 28–29 of \cite{FI96}) and extending the domain $\Omega$, one can also obtain a boundary controllability result with two controls for the following system 
\begin{equation}\label{GK_KS_bd}
	\begin{cases}
		\partial_t u+\Delta^2 u=0 &  \text{ in } (0,T)\times \Omega,\\
		u=\mathbf{1}_{\Gamma}q_1, \quad \Delta u=\mathbf{1}_{\Gamma}q_2 & \text{ on } (0,T)\times \partial \Omega, \\
		u(0)=u_0 & \text{ in } \Omega,
	\end{cases}
\end{equation}
where $\Gamma$ is an open subset of the boundary $\pa \Omega.$ Whether it is possible to achieve controllability with fewer boundary controls remains an open question in the general case of smooth domains in multiple dimensions.

Let us also mention the related work \cite{LRR20}, where the authors established a spectral inequality for the bi-Laplace operator with Dirichlet boundary conditions, relying on several Carleman estimates. This inequality is then used to directly deduce the internal null controllability of the system
\begin{equation}\label{LRR_KS}
	\begin{cases}
		\partial_t u+\Delta^2 u=\chi_{\omega}v &  \text{ in } (0,T)\times \Omega,\\
		u=0, \quad \dfrac{\pa u}{\pa \nu}=0 & \text{ on } (0,T)\times \partial \Omega, \\
		u(0)=u_0 & \text{ in } \Omega.
	\end{cases}
\end{equation}
The most closely related result to our study available in the literature is presented in \cite{TT17}, which serves as the motivating point for our analysis. In that work, the author investigated the boundary null controllability of the 2-D Kuramoto–Sivashinsky equation \eqref{NKS_cyl} in a rectangular domain $\Omega=(0,a)\times (0,b).$ Local null controllability of \eqref{NKS_cyl} has been proved under the following assumptions:
\begin{itemize}
	\item $\nu\in(0,\sqrt{\lambda_1})$, where $\lambda_1$ is the smallest eigenvalue of the bi-Laplace operator % $\Delta^2$ with homogeneous Navier-type boundary conditions 
	(see \Cref{sec:framework} for further details)
	\[
	A_0 : \left\{ u \in H^4(\Omega) \; ; \; u = \Delta u = 0 \text{ on } \partial \Omega \right\} \longrightarrow L^2(\Omega), \quad 
	u \mapsto \Delta^2 u.
	\]

	\item $\Gamma$ contains both a horizontal and a vertical segment of nonzero length as indicated in \Cref{fig:region}. 
\end{itemize}

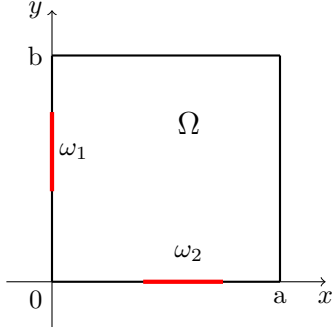
\begin{figure}[htbp!]
	\centering
	\begin{tikzpicture}[scale=3] % Aumenta la escala para hacer el dibujo más grande
		% Eje x: Extendido sin cruzar el borde
		\draw[thin, -] (-0.2,0) -- (0,0); % Parte negativa
		\draw[thin, ->] (1,0) -- (1.2,0) node[below] {$x$}; % Parte positiva
		
		% Eje y: Extendido sin cruzar el borde
		\draw[thin, -] (0,-0.2) -- (0,0); % Parte negativa
		\draw[thin, ->] (0,1) -- (0,1.2) node[left] {$y$}; % Parte positiva
		
		% Cuadrado con bordes más gruesos
		\draw[ thick] (0,0) -- (1,0) ;
		\draw[ thick] (1,0) -- (1,1) ;
		\draw[ thick] (1,1) -- (0,1) ;
		\draw[ thick] (0,1) -- (0,0);
		
		% Intervalo omega en el borde Gamma_1
		\draw[ultra thick, red] (0,0.4) -- (0,0.75);
		\draw[ultra thick, red] (0.4,0) -- (0.75,0); % Intervalo más grueso para destacar
		\node[left] at (0.2,0.575) {$\omega_1$}; % Etiqueta del intervalo
		\node[below] at (0.6,0.2) {$\omega_2$};
		% Coordenadas
		\node[below left] at (0,0) {0};
		\node[below] at (1,0) {a};
		\node[left] at (0,1) {b};
		% Etiqueta de Omega bar en la parte superior derecha
		\node at (0.6, 0.7) {\Large$\Omega$}; 
	\end{tikzpicture}
	\caption{The domain $\Omega$ for equation \ref{NKS_cyl}, with the control region denoted by $\omega=\omega_1\times \omega_2$ colored as red}.
	\label{fig:region}
\end{figure}
The proof demonstrated in \cite{TT17} relies on general results related to the observability 
and controllability of parabolic and hyperbolic systems. The method can be summarized as follows: first, using a classical technique due to Russell \cite{R73}, one deduces the null-controllability (or observability) of the linearized equation from the controllability (or observability) of an associated hyperbolic equation. Then, the linearized system with a source term is controlled using a result from
 \cite{Tucsnak-nonlinear}. Finally, a fixed-point argument is applied to obtain the desired result for the nonlinear equation.

In particular, \Cref{th_nl_KS} improves upon the main result of \cite{TT17} in several ways when $N=2$, that is, when $\Omega_y = (0,b)$ for some $b>0$:
\begin{itemize}
	\item We replace the sufficient condition $\nu\in(0,\sqrt{\lambda_1})$ by a more general one $\nu\notin \mathcal{N},$ where $\mathcal{N}$ is defined in \eqref{critical value_cyl1}.  This condition is natural and expected, as a similar phenomenon arises already in the one-dimensional case (see Section~\ref{1d_discussion} for further discussion).
\item The control region $\Gamma$ is reduced to a vertical segment of positive length, as illustrated in \Cref{fig:region1}, thereby relaxing the two-edges assumptions of earlier work.
\begin{figure}[htbp!]
	\centering
	\begin{tikzpicture}[scale=3] % Aumenta la escala para hacer el dibujo más grande
		% Eje x: Extendido sin cruzar el borde
		\draw[thin, -] (-0.2,0) -- (0,0); % Parte negativa
		\draw[thin, ->] (1,0) -- (1.2,0) node[below] {$x$}; % Parte positiva
		
		% Eje y: Extendido sin cruzar el borde
		\draw[thin, -] (0,-0.2) -- (0,0); % Parte negativa
		\draw[thin, ->] (0,1) -- (0,1.2) node[left] {$y$}; % Parte positiva
		
		% Cuadrado con bordes más gruesos
		\draw[ thick] (0,0) -- (1,0);
		\draw[thick] (1,0) -- (1,1);
		\draw[thick] (1,1) -- (0,1);
		\draw[ thick] (0,1) -- (0,0) ;
		
		% Intervalo omega en el borde Gamma_1
		\draw[ultra thick, red] (0,0.4) -- (0,0.75); % Intervalo más grueso para destacar
		\node[left] at (0.2,0.575) {$\omega$}; % Etiqueta del intervalo
		
		% Coordenadas
		\node[below left] at (0,0) {0};
		\node[below] at (1,0) {a};
		\node[left] at (0,1) {b};
		% Etiqueta de Omega bar en la parte superior derecha
		\node at (0.6, 0.7) {\Large$\Omega$}; 
	\end{tikzpicture}
	\caption{Control region denoted by $\omega$ colored as red}
	\label{fig:region1}
\end{figure}
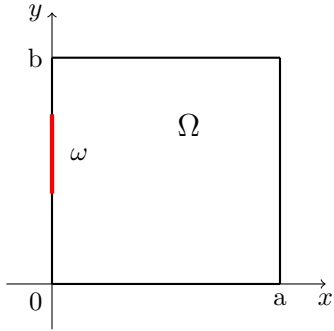
\item Moreover, for $N \geq 2 $, we establish necessary and sufficient conditions on the parameter $\nu$ for the null controllability of the linearized problem \eqref{lin_NKS_cyl}; see \Cref{Ln_KSE}. To the best of our knowledge, such necessary conditions had not been previously identified in the literature for the Kuramoto–Sivashinsky equation in higher dimensions.
\end{itemize}

\subsection{Strategy of the proofs of our main results}\label{st}

The starting point is to treat the linear setting with boundary controls. The approach transforms the higher-dimensional problem into an infinite set of 1-D problems by decomposing the solution of the associated adjoint system into eigenfunction expansions along one direction. The main idea is to combine the boundary control result in 1-D with the Lebeau-Robbiano strategy \cite{LR} to establish the null controllability of the main problem with an explicit control cost. This will give the proof of \Cref{Ln_KSE}. We remark that this strategy was developed for the first time in \cite{AB2014}. 

Next,  by applying the  source term method introduced in \cite{Tucsnak-nonlinear},  we prove a null-controllability result of the linearized model with additional source terms  which are exponentially decreasing as $t\to T^-$, and in this step, we notably use the precise control cost obtained in \Cref{Ln_KSE} . Finally, we use the Banach fixed-point theorem to obtain the local (boundary)  null-controllability for our nonlinear system \eqref{th_nl_KS} and thus obtaining the proof of \Cref{th_nl_KS}.

Let us discuss in more detail the strategy of the proof in the linear case. 
\begin{itemize}
	\item[--]We begin by employing the moment method to establish a boundary null controllability result of the one-dimensional Kuramoto-Sivashinsky equation (see \Cref{1d_discussion} below for literature review on associated 1-D control problems). For each $j\in\mathbb N$, let us consider
\begin{equation}\label{KS-oned_cyl}
	\begin{cases}
		\partial_t v+\partial_{x}^4 v+\left(\nu-2\mu^{\oy}_j\right)\partial_{x}^2 v=0 &\quad   t\in (0,T), \;\; x\in \Omega_x,\\
		v(t,0)=v(t,a)=0, \quad  \partial_{x}^2 v(t,0)=q(t), \quad  \partial_{x}^2 v(t,a)=0 &\quad t\in (0,T), \\
		v(0,x)=v_0(x) &\quad x\in \Omega_x,
	\end{cases}
\end{equation}
where $\{\mu_j^{\Omega_y}\}_{j\in \N}$ denotes the set of eigenvalues of the Dirichlet Laplacian in the domain ${\Omega_y}.$ We have the following result:
\begin{theorem}\label{null control 1d_cyl}
	Let us assume $T>0$ be given. Then for every $j\in\mathbb N$ and each $v_0\in L^2(\Omega_x)$, there exists a control $q\in L^2(0,T)$ such that the system \eqref{KS-oned_cyl} satisfies $v(T)=0$ if and only if $\nu \notin \mathcal{N}.$ Moreover, the control satisfies
	\begin{equation}\label{cost_cyl}
		\norm{q}_{L^2(0,T)}\leq Ce^{\frac{C j^{\frac{1}{(N-1)}}}{T}}\norm{v_0}_{L^2(\Omega_x)},
	\end{equation} 
	for constant $C$ which is independent of $T$ and $v_0$.
\end{theorem}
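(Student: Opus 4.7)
The plan is to apply the method of moments. The eigenpairs of $A_j u := u'''' + (\nu-2\mu_j^{\Omega_y})u''$ under hinged boundary conditions $u(0)=u(a)=u''(0)=u''(a)=0$ are
\[
\phi_k(x)=\sqrt{2/a}\,\sin(k\pi x/a),\qquad \lambda_k^j=\Big(\tfrac{k\pi}{a}\Big)^{\!2}\!\left(\Big(\tfrac{k\pi}{a}\Big)^{\!2}+2\mu_j^{\Omega_y}-\nu\right),\qquad k\geq 1,
\]
and a direct computation shows that $\lambda_k^j=\lambda_l^j$ with $k\neq l$ occurs precisely when $\nu=2\mu_j^{\Omega_y}+\pi^2(k^2+l^2)/a^2$. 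Thus the spectrum is simple if and only if $\nu\notin\mathcal N$. Pairing a solution $v$ of \eqref{KS-oned_cyl} against $\psi(t,x):=e^{-\lambda_k^j(T-t)}\phi_k(x)$, which solves the backward self-adjoint problem $-\partial_t\psi+A_j\psi=0$ with the same hinged boundary conditions, and integrating by parts in $x$, the only surviving boundary contribution is $q(t)\psi_x(0,t)$. Hence $v(T)=0$ is equivalent to solving the moment problem
\[
\phi_k'(0)\int_0^T q(T-s)\,e^{-\lambda_k^j s}\,ds = e^{-\lambda_k^j T}\,\langle v_0,\phi_k\rangle,\qquad k\geq 1,
\]
where $\phi_k'(0)=\sqrt{2/a}\,(k\pi/a)\neq 0$.

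Necessity is immediate: if $\lambda_k^j=\lambda_l^j$ for some $k\neq l$, then $\Phi:=l\phi_k-k\phi_l$ is a nonzero eigenfunction with $\Phi_x(0)=0$, so the Fourier mode $\langle v_0,\Phi\rangle$ is invisible to the boundary observation and cannot be controlled. For the sufficiency, assuming $\nu\notin\mathcal N$, I would construct a biorthogonal family $\{\chi_k^j\}\subset L^2(0,T)$ to $\{e^{-\lambda_k^j s}\}_{k\geq 1}$ and set
\[
q(T-s):=\sum_{k\geq 1}\frac{e^{-\lambda_k^j T}\,\langle v_0,\phi_k\rangle}{\phi_k'(0)}\,\chi_k^j(s).
\]
The existence and quantitative estimates for such a biorthogonal family in the present parabolic fourth-order setting can be obtained through the classical Fattorini--Russell construction, adapted to a spectrum with $k^4$ growth as in \cite{AB2014}. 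The key inputs are the uniform (in $k$ and $j$) gap/condensation bound $\sum_{m\neq k}|\lambda_m^j-\lambda_k^j|^{-1}\leq C$, which follows from the explicit form of $\lambda_k^j$ together with Weyl's law $\mu_j^{\Omega_y}\asymp j^{2/(N-1)}$, and yield an estimate of the form
\[
\|\chi_k^j\|_{L^2(0,T)}\leq C\exp\!\left(\frac{C\sqrt{\mu_j^{\Omega_y}}}{T}\right)e^{-\lambda_k^j T/2}.
\]
Combined with $|\phi_k'(0)|\geq ck$, Parseval's identity $\sum_k|\langle v_0,\phi_k\rangle|^2=\|v_0\|_{L^2(\Omega_x)}^2$, and the rapid tail decay $e^{-\lambda_k^j T/2}$, the series converges in $L^2(0,T)$ and produces the announced cost bound \eqref{cost_cyl}.

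The main obstacle is tracking the $j$-dependence of the biorthogonal norm carefully. Indeed, the low modes $k\lesssim\sqrt{\mu_j^{\Omega_y}}$ satisfy $\lambda_k^j\approx 2\mu_j^{\Omega_y}(k\pi/a)^2$ and accumulate near the origin before transitioning, for $k\gtrsim\sqrt{\mu_j^{\Omega_y}}$, into the genuine $k^4$ regime; it is this near-origin cluster that produces the factor $e^{C\sqrt{\mu_j^{\Omega_y}}/T}$ in the cost. Handling this cluster, either through a block decomposition of the spectrum (treating the low and high parts separately) or via a direct Luxemburg--Korevaar-type computation with sharp constants, is the technical heart of the argument and the place where the exponent $j^{1/(N-1)}$ enters via Weyl's asymptotic.
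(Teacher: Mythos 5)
Your overall architecture is the same as the paper's: reduce to a moment problem through the adjoint eigenfunctions, identify $\mathcal N$ as exactly the set where the spectrum degenerates, and solve the moments with a biorthogonal family to $\{e^{-\lambda_k^j s}\}$. Your necessity argument (the eigenfunction $\Phi=l\phi_k-k\phi_l$ with $\Phi_x(0)=0$, which makes the mode $\langle v_0,\Phi\rangle$ invisible in the moment identity) is correct and in fact more direct than the paper's route, which passes through failure of approximate controllability and density of the reachable set.

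There is, however, a genuine gap in the quantitative half, i.e.\ precisely where the cost \eqref{cost_cyl} must come from. The estimate you posit, $\norm{\chi_k^j}_{L^2(0,T)}\le C e^{C\sqrt{\mu_j^{\Omega_y}}/T}\,e^{-\lambda_k^j T/2}$, cannot hold for any biorthogonal family: from $\int_0^T e^{-\lambda_k^j s}\chi_k^j(s)\,ds=1$ and Cauchy--Schwarz one gets $\norm{\chi_k^j}_{L^2(0,T)}\ge \norm{e^{-\lambda_k^j\cdot}}_{L^2(0,T)}^{-1}\ge \sqrt{2\lambda_k^j}$ (for $\lambda_k^j>0$), so these norms necessarily \emph{grow} in $k$; the decay $e^{-\lambda_k^j T}$ lives in the moment data $e^{-\lambda_k^j T}\langle v_0,\phi_k\rangle$, which your final sentence then counts a second time. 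Relatedly, the condensation bound $\sum_{m\neq k}|\lambda_m^j-\lambda_k^j|^{-1}\le C$ is not by itself the hypothesis that yields a $T$-explicit bound. What the paper actually verifies is a uniform spectral gap together with a counting-function bound $N(r)\le Cr^{1/4}$ (uniform in $j$, after shifting the finitely many possibly non-positive exponents occurring when $2\mu_j^{\Omega_y}<\nu$), and then invokes an AB2014/Boyer-type theorem giving $\norm{q_{k,T}}_{L^2(0,T)}\le Ce^{C|\lambda_k^j|^{1/4}+CT^{-1/3}}$ with constants independent of $j$ and $T$. The $j$-dependence of the cost does not sit in the biorthogonal family at all: it enters through $|\lambda_k^j|^{1/4}\le C\bigl(k+(\mu_j^{\Omega_y})^{1/4}\sqrt{k}\bigr)$, Weyl's law $\sqrt{\mu_j^{\Omega_y}}\sim j^{1/(N-1)}$, and Young's inequality, which trades the growth $e^{Ck+C\mu_j^{1/4}\sqrt k}$ against the dissipation $e^{-ck^4T}$ at the price of a factor $e^{C/T}e^{Cj^{1/(N-1)}/T}$; summing the resulting series (and reducing $T\ge1$ to $T<1$) gives \eqref{cost_cyl}. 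So your plan is repairable along these lines, but as written the central estimate is false and is not delivered by any Fattorini--Russell-type construction, so the proof of the cost bound does not go through.
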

\begin{remark}
We point out that the control cost in~\eqref{cost_cyl} depends on the parameter~$j$. This contrasts with~\cite[Theorem 4.1]{TT17}, where a uniform estimate of the form $Ce^{C/T}$ is obtained for a one-dimensional problem. In our case, the coefficient in the second-order term of equation~\eqref{KS-oned_cyl} depends on~$j$, which affects the spectral properties of the operator. Our approach, based on the moment method, requires a careful analysis of both high and low frequencies. This leads to the observed dependence of the control cost on~$j$. Obtaining a uniform constant with respect to~$j$, whether via the moment method or an alternative approach, remains an interesting question. 
	\end{remark}
\begin{remark} We show that the KS equation in 1-D fails to be approximate controllable when the parameter $\nu \in \mathcal N$. 
Thus the condition $\nu \notin \mathcal{N}$ is necessary for the null controllability, see \Cref{necessary} for details.
\end{remark}
\item[--] Next, we explore a partial observability (see \Cref{par obs_cyl}) for the corresponding adjoint system to \eqref{lin_NKS_cyl} with data in a subset of $L^2(\Omega)$. To construct the control in multi dimensions, we use the controllability result from the one-dimensional case (see \Cref{null control 1d_cyl}) along with the following spectral inequality:
\begin{theorem}\label{thm:spec_ineq_degen}
	Let $(\mu_j^{\Omega_y},\Psi_j^{\Omega_y})$ be the eigen-element  of the eigenvalue problem \eqref{eg_value}. Let $\omega$ be an open and nonempty subset of $\Omega_y$. There exists a constant $C > 0$ such that
	\begin{equation}\label{lr}
		\sum_{\mu_j^{\Omega_y} \leq \mu} |a_j|^2 \leq C e^{C  \sqrt{\mu}} 
		\int_{\omega} \left| \sum_{\mu_j^{\Omega_y} \leq \mu} a_j \Psi_j^{\Omega_y} \right|^2 \, dx,
	\end{equation}
	 $\{a_j\} \in \mathbb{R}$, and any $\mu > 0$.
\end{theorem}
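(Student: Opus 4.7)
The plan is to prove this as an instance of the classical Lebeau--Robbiano spectral inequality for the Dirichlet Laplacian, whose proof proceeds by lifting the problem to one additional dimension and applying quantitative unique continuation for an elliptic operator. Writing $u_\mu := \sum_{\mu_j^{\Omega_y}\leq \mu} a_j \Psi_j^{\Omega_y}$, Parseval's identity shows that the left-hand side of \eqref{lr} equals $\|u_\mu\|_{L^2(\Omega_y)}^2$, so the goal reduces to the estimate $\|u_\mu\|_{L^2(\Omega_y)}^2 \leq C e^{C\sqrt{\mu}} \|u_\mu\|_{L^2(\omega)}^2$.

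For a fixed $S>0$, the first step is to introduce the augmented function
\begin{equation*}
w(s,y) := \sum_{\mu_j^{\Omega_y}\leq \mu} a_j \frac{\sinh\!\left(\sqrt{\mu_j^{\Omega_y}}\,s\right)}{\sqrt{\mu_j^{\Omega_y}}}\,\Psi_j^{\Omega_y}(y),\qquad (s,y)\in (0,S)\times \Omega_y.
\end{equation*}
A direct computation shows that $w$ solves the elliptic equation $(\partial_s^2+\Delta_y)w=0$ in $(0,S)\times \Omega_y$ with Dirichlet conditions $w=0$ on $(0,S)\times \partial \Omega_y$, together with the Cauchy data $w(0,\cdot)=0$ and $\partial_s w(0,\cdot)=u_\mu$. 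Since $\sinh(\sqrt{\mu_j^{\Omega_y}}\,s)\leq e^{\sqrt{\mu}\,S}$ for every $s\in (0,S)$ and every $j$ with $\mu_j^{\Omega_y}\leq \mu$, we get the global energy bound $\|w\|_{H^1((0,S)\times \Omega_y)}^2\leq Ce^{2\sqrt{\mu}\,S}\|u_\mu\|_{L^2(\Omega_y)}^2$.

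The second step is to apply a local Carleman estimate for $-(\partial_s^2+\Delta_y)$ on the cylinder $(0,S)\times \Omega_y$, combined with a propagation-of-smallness argument, to obtain an interpolation inequality of the form
\begin{equation*}
\|w\|_{H^1(\mathcal O)}\leq C\,\|w\|_{H^1((0,S)\times \omega)}^{\alpha}\,\|w\|_{H^1((0,S)\times \Omega_y)}^{1-\alpha}
\end{equation*}
for some $\alpha\in (0,1)$ and a neighborhood $\mathcal O\subset (0,S)\times \Omega_y$ of a piece of $\{s=0\}\times \Omega_y$ sitting away from the lateral boundary. Since $w(0,\cdot)=0$, trace and Poincar\'e-type inequalities across $\{s=0\}$ allow us to bound $\|\partial_s w(0,\cdot)\|_{L^2}=\|u_\mu\|_{L^2(\Omega_y)}$ by $\|w\|_{H^1(\mathcal O)}$; combined with the energy bound from the previous step this yields
\begin{equation*}
\|u_\mu\|_{L^2(\Omega_y)}^2\leq C\,\|u_\mu\|_{L^2(\omega)}^{2\alpha}\,e^{2(1-\alpha)\sqrt{\mu}\,S}\,\|u_\mu\|_{L^2(\Omega_y)}^{2(1-\alpha)}.
\end{equation*}
Rearranging and applying Young's inequality gives the desired estimate $\|u_\mu\|_{L^2(\Omega_y)}^2\leq Ce^{C\sqrt{\mu}}\|u_\mu\|_{L^2(\omega)}^2$.

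The main obstacle is the Carleman inequality itself, together with the propagation-of-smallness construction that carries the quantitative unique continuation from $(0,S)\times \omega$ to a neighborhood of $\{s=0\}\times \Omega_y$, while respecting the Dirichlet boundary condition on $(0,S)\times \partial \Omega_y$. This is by now a classical ingredient of the Lebeau--Robbiano strategy; one may either reproduce the argument following \cite{LR}, invoke the formulation given in \cite{LRR20}, or cite the inequality directly as a standard result for the Dirichlet Laplacian on a smooth bounded domain.
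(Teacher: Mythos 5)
The paper itself does not prove this statement: it is invoked as the classical Lebeau--Robbiano spectral inequality for the Dirichlet Laplacian on a smooth bounded domain (hence the regularity of $\Omega_y$ assumed later), and the authors simply use it, in the spirit of \cite{LR}. Your overall strategy --- the $\sinh$ lifting, harmonicity of $w$ in the cylinder with $w(0,\cdot)=0$, $\partial_s w(0,\cdot)=u_\mu$, the $e^{C\sqrt{\mu}\,S}$ energy bound, and quantitative unique continuation for $\partial_s^2+\Delta_y$ --- is indeed the standard proof of that classical result, so the route is right in spirit; citing it directly, as the paper does, would also be acceptable.

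However, as written the key step has a genuine gap: you state the interpolation inequality with observation $\|w\|_{H^1((0,S)\times\omega)}$ and then, in the final display, replace this quantity by $\|u_\mu\|_{L^2(\omega)}$. That replacement is not justified. For $s>0$ the slice $w(s,\cdot)=\sum_j a_j\,(\mu_j^{\Omega_y})^{-1/2}\sinh\big(\sqrt{\mu_j^{\Omega_y}}\,s\big)\Psi_j^{\Omega_y}$ carries $j$-dependent coefficients, and since the $\Psi_j^{\Omega_y}$ are not orthogonal in $L^2(\omega)$ you cannot bound its $L^2(\omega)$ norm by the largest coefficient times $\|u_\mu\|_{L^2(\omega)}$; the only direct bound passes through $\|u_\mu\|_{L^2(\Omega_y)}$, which ruins the absorption argument, and bounding $\|w\|_{H^1((0,S)\times\omega)}$ by $e^{C\sqrt{\mu}}\|u_\mu\|_{L^2(\omega)}$ is essentially the spectral inequality you are trying to prove, so the step is circular. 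In the classical argument the observation sits on the bottom face, not on the lateral cylinder: the interpolation inequality controls $\|w\|_{H^1((s_1,s_2)\times\Omega_y)}$ by $\|w\|_{H^1((0,S)\times\Omega_y)}^{1-\alpha}\,\|\partial_s w(0,\cdot)\|_{L^2(\omega)}^{\alpha}$, and $\partial_s w(0,\cdot)|_{\omega}=u_\mu|_{\omega}$ exactly, which is how $\|u_\mu\|_{L^2(\omega)}$ enters. Two further points in the same step: the propagated region must contain a full cross-sectional slab $(s_1,s_2)\times\Omega_y$, so the Carleman estimates must hold up to the lateral boundary where the Dirichlet condition is used (taking $\mathcal O$ ``away from the lateral boundary'' leaves you with a norm over a proper subset of $\Omega_y$, from which the full norm cannot be recovered by orthogonality); and the recovery of $\|u_\mu\|_{L^2(\Omega_y)}$ from that slab is cleanest via $\sinh(x)\geq x$, which gives $\|w(s,\cdot)\|_{L^2(\Omega_y)}\geq s\,\|u_\mu\|_{L^2(\Omega_y)}$, rather than via an $L^2$ bound on the trace of $\partial_s w$ at $s=0$, which does not follow from $H^1(\mathcal O)$ control alone. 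With the observation moved to the Cauchy data on $\{0\}\times\omega$, as in \cite{LR}, the argument closes and yields the stated $Ce^{C\sqrt{\mu}}$ constant.
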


\medskip 

\item[--] We utilize the partial observability mentioned in the previous step and combining it with the usual Lebeau-Robbiano approach, we design a control strategy driving $u$ to zero at time $T.$ This consists of the following: 
\begin{enumerate}
\item Deduce that we can find a control that kills the low frequencies.
\item Build a control that decreases the norm of $u(t)$, by first killing the low frequencies
with the control found in the previous step, and then let the system evolve with zero control and take advantage of the natural dissipation of the system.
\end{enumerate} 
\end{itemize}

\begin{remark}
It is worth mentioning that, despite the presence of the parameter~$j$ in the control cost of the 1-D problem, we are able to prove the controllability of the higher-dimensional system by exploiting the dissipation of the original system (see \Cref{dis_cyl}), which compensates for the growth in control cost. Notably, in the seminal paper \cite{AB2014} (see also \cite{BO24,khodja2024new,HSMdT25}), there is no dependency on the parameter~$j$ in the control cost coming from the 1-D problem. This phenomenon appears to be purely related to the fourth-order operator.
\end{remark}
We apply the same technique discussed above to prove \Cref{thm_int}. In \cite{S15}, the author studied a similar result as \Cref{thm_int} for the $N$-dimensional heat equation. Using a pointwise controllability result for one-dimensional heat equation \cite{D73} and then applying Lebeau-Robbiano technique, he conclude the result. On the contrary, to prove our result \eqref{thm_int}, we need to demonstrate first an analogous pointwise controllability result for the KS equation (see \Cref{int_nullcontrol} and \Cref{int_thm_on}). Then following the technique of \cite{S15}, we derive the main result.

\subsection{Bibliographic details for 1-D KS equation}\label{1d_discussion}
For the sake of bibliographical completeness, in this section we discuss a brief overview of the existing literature concerning one-dimensional KS equation. The earliest study regarding controllability 
of KS equation in 1-D appears in
\cite{EC10}, where the author employed the moment method to investigate the 
boundary null controllability of linear KS 
equation
\begin{equation}\label{KS-oned}
	\begin{cases}
		\partial_t u+\partial_{x}^4 u+\nu\partial_{x}^2 u=0 & \quad t\in (0,T), \;\; x\in (0,1),\\
		u(t,0)=u(t,1)=0,  \quad \partial_{x} u(t,0)=q(t), \partial_{x} u(t,1)=0& \quad t\in (0,T), \\
		u(0,x)=u_0(x) & \quad x\in (0,1),
	\end{cases}
\end{equation}
with a single control force $q_1\in H^1(0,T)$ acting 
on first order derivative at left end point for any $u_0\in L^2(0,1)$ 
 provided $\nu\notin \mathcal{N}_1$, where
\begin{equation}\label{critical value2}
	\mathcal{N}_1=\bigg\{\pi^2\left({k^2+l^2}\right): k,l \in \N, \;\; k\neq l, k\equiv l \text{ mod } 2\bigg\}.
\end{equation}
The authors in
\cite{EC11} considered  nonlinear 
KS equation 
\begin{equation}\label{KS-oned2}
	\begin{cases}
		\partial_t u+\partial_{x}^4 u+\nu\partial_{x}^2 u+uu_x=0 & \quad  t\in (0,T), \;\; x\in (0,1),\\
		u(t,0)=q_1(t), \;\; u(t,1)=0, \quad  \partial_{x} u(t,0)=q_2(t), \;\; \partial_{x} u(t,1)=0& \quad t\in (0,T), \\
		u(0,x)=u_0(x) & \quad x\in (0,1),
	\end{cases}
\end{equation}
and proved its local exact controllability 
to trajectory with $L^2(0,T)$ boundary controls 
acting at left endpoint of zeroth and first 
order derivative without any critical set conditions for $\nu$. 
They first used Carleman estimates to study 
the null controllability of linearized KS 
equation and then used local inversion 
theorem to get the desired result for 
nonlinear KS equation.
The study of  null controllability of 
linear KS equation was further developed in
\cite{EC17}, wherein the authors 
studied the boundary null controllability of 
linear KS equation, but now with the control 
acting only on zeroth order derivative at 
left end point, using moment method. 
Next, they considered the Neumann boundary 
case and proved that the linear KS system 
is not null controllable with a single control 
acting on either of the second or third order 
derivatives but is so with both the boundary 
controls acting simultaneously on the system. 
The author in
\cite{TT17} studied the local boundary 
null controllability of KS equation \eqref{KS-oned2} with $q_1=0$, $q_2\in L^2(0,T)$ and $u_0\in H^{-1}(0,1),$ provided $\nu \notin \mathcal{N}_1.$ As in the 2-D case discussed in \Cref{sec_tak}, here Russel's method (see \cite{R73}) and source term method (see \cite{Tucsnak-nonlinear}) have been used. Let us conclude our literature reviews by mentioning the work \cite{CL15}, where the authors studied Fredholm transform and local rapid stabilization for the KS equation
\begin{equation}\label{KS-oned3}
	\begin{cases}
		\partial_t u+\partial_{x}^4 u+\nu\partial_{x}^2 u+uu_x=0 & \quad  t\in (0,T), \;\; x\in (0,1),\\
		u(t,0)=u(t,1)=0, \quad  \partial^2_{x} u(t,0)=q(t), \;\; \partial^2_{x} u(t,1)=0 &\quad t\in (0,T), \\
		u(0,x)=u_0(x) &\quad x\in (0,1),
	\end{cases}
\end{equation}
provided $\nu \notin \mathcal{N}_2$, where
\begin{equation}\label{critical value3}
	\mathcal{N}_2=\bigg\{\pi^2\left({k^2+l^2}\right): k,l \in \N, \;\; k\neq l\bigg\}.
\end{equation}
Note that in our study, the associated one-dimensional system \eqref{KS-oned_cyl} is closely related to aforementioned equation \eqref{KS-oned3} as far as boundary condition is concerned. And thus the critical set condition \eqref{critical value_cyl1} can be easily anticipated from \eqref{critical value3}.

\subsection{Outline of the paper} The rest of the paper is organized as follows. \cref{sec:framework} is devoted to the functional framework and well-posedness of the higher-dimensional linear KS system. In \Cref{1d}, we establish that the one dimensional problem is null controllable with an explicit cost of the control (see \Cref{null control 1d_cyl}). \Cref{sec:cylinder} contains the proof of the main result in the liner setting (\Cref{Ln_KSE}). \Cref{sec:internal} deals with the internal control problem in higher dimensions. Finally, in \Cref{sec:nonlinear}, we present a brief proof of the nonlinear result stated in \Cref{th_nl_KS}

%\newpage

%\begin{theorem} Let $T >0$. For any $u_0\in L^2(\Omega)$ there exists 
%$q \in L^2(0,T; L^2(\Gamma))$ such that system \eqref{lin_NKS_cyl} satisfies $ u(T) = 0.$
%	\end{theorem}
%let $E$ be a closed subspace of $L^2(\Omega).$ We denote $\Pi_{E}$ the orthogonal projection in $L^2(\Omega)$ onto $E.$

\section{Functional framework and well-posedness}\label{sec:framework}
The goal of this section is to establish the well-posedness of the linearized system \eqref{lin_NKS_cyl}. This will be consequence of a general result for the following equation
\begin{equation}\label{lin_NKS_gen}
	\begin{cases}
		\partial_t u+\Delta^2 u+\nu\Delta u=f &  \text{ in } (0,T)\times \Omega,\\
		u=0, \quad \Delta u=\mathbf{1}_{\Gamma}q & \text{ on } (0,T)\times \partial \Omega, \\
		u(0)=u_0 & \text{ in } \Omega.
	\end{cases}
\end{equation}
where $u_0$, $f$, $q$ are functions taken from suitable functional spaces. We obtain this in several steps. 

Let $\nu>0$ be given and define the operator $\mathcal{A}:\mathcal D(\mathcal{A})\subset L^2(\Omega)\to L^2(\Omega)$ as
\begin{equation*}
\begin{cases}
\mathcal D(\mathcal A)=\{u\in H^4(\Omega):u=\Delta u=0 \;\; \textnormal{on}\;\;\partial \Omega\} \\
\mathcal Au=-\Delta^2u-\nu \Delta u
\end{cases}
\end{equation*}
We begin by proving the following lemma.
\begin{lemma}\label{lem:opA}
The operator $\mathcal A$ is self-adjoint, quasi-dissipative, and maximal.
\end{lemma}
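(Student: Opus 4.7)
The plan is to reduce all three properties to known spectral facts about the Dirichlet Laplacian $L := -\Delta_D$, with domain $\mathcal{D}(L) = H^2(\Omega) \cap H^1_0(\Omega)$, eigenvalues $\{\lambda_k\}_{k\geq 1}$, and an orthonormal eigenbasis $\{\phi_k\}_{k\geq 1}$ of $L^2(\Omega)$. The key observation is that the Navier conditions $u = \Delta u = 0$ on $\partial\Omega$ characterize precisely $\mathcal{D}(L^2)$: one has $u\in\mathcal{D}(L)$ and $Lu\in\mathcal{D}(L)$, and by elliptic regularity for smooth $\Omega$ this coincides with $\mathcal{D}(\mathcal A) = \{u\in H^4(\Omega): u=\Delta u = 0 \text{ on }\partial\Omega\}$. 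In particular each $\phi_k$ lies in $\mathcal{D}(\mathcal A)$ and satisfies $\mathcal Au = p(L)u$ with $p(s) = -s^2+\nu s$.

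For self-adjointness, I would first establish symmetry directly by two integrations by parts. For $u,v\in\mathcal D(\mathcal A)$, all boundary terms drop because of the Navier conditions, yielding
\begin{equation*}
\langle \mathcal Au, v\rangle_{L^2(\Omega)} = -\int_\Omega \Delta u\,\Delta v\,dx + \nu\int_\Omega \nabla u\cdot\nabla v\,dx,
\end{equation*}
which is obviously symmetric in $(u,v)$. Self-adjointness then follows from the functional calculus applied to the self-adjoint operator $L$: since $p$ is a real polynomial, $p(L)$ is self-adjoint on $\mathcal D(L^2)=\mathcal D(\mathcal A)$, with eigenvalues $p(\lambda_k) = -\lambda_k^2+\nu\lambda_k$. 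Equivalently, $\{\phi_k\}$ diagonalizes $\mathcal A$ by real multipliers, so $\mathcal A=\mathcal A^\ast$.

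For quasi-dissipativity, setting $u=v$ in the above identity gives $\langle \mathcal Au,u\rangle = -\|\Delta u\|^2 + \nu\|\nabla u\|^2$. I would then combine the elementary bound $\|\nabla u\|^2 = -(u,\Delta u)_{L^2}\leq \|u\|\|\Delta u\|$ with Young's inequality,
\begin{equation*}
\nu\|\nabla u\|^2 \leq \|\Delta u\|^2 + \tfrac{\nu^2}{4}\|u\|^2,
\end{equation*}
to conclude $\langle \mathcal Au,u\rangle \leq \omega\|u\|^2$ with $\omega=\nu^2/4$. Hence $\mathcal A-\omega I$ is dissipative.

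For maximality, it suffices to show that $\lambda I -\mathcal A$ is surjective onto $L^2(\Omega)$ for some (and hence all) $\lambda>\omega$. Given $f=\sum_k f_k\phi_k\in L^2(\Omega)$, the natural candidate solution of $\lambda u - \mathcal Au = f$ is
\begin{equation*}
u = \sum_k \frac{f_k}{\lambda+\lambda_k^2-\nu\lambda_k}\,\phi_k,
\end{equation*}
and for $\lambda$ sufficiently large the denominator is bounded below by a positive constant uniformly in $k$, so the series converges in $L^2$ and in fact $\sum_k \lambda_k^4|u_k|^2<\infty$, placing $u$ in $\mathcal D(L^2)=\mathcal D(\mathcal A)$. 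The main technical point I expect to have to justify carefully is the identification $\mathcal D(\mathcal A)=\mathcal D(L^2)$, which hinges on standard $H^4$ elliptic regularity up to the boundary for the Dirichlet Laplacian; once this is granted, all three properties follow cleanly from the functional calculus of $L$.
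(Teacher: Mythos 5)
Your argument is correct, but it follows a genuinely different route from the paper. The paper proves quasi-dissipativity by the same energy computation you use (integration by parts plus Young, with the slightly cruder constant $\nu^2/2$ instead of your $\nu^2/4$), dismisses self-adjointness as clear, and obtains maximality variationally: for $\lambda>\nu^2/2$ it introduces the bilinear form $a(u,v)=\lambda\int_\Omega uv+\int_\Omega\Delta u\,\Delta v+\nu\int_\Omega \Delta u\, v$ on $H^2(\Omega)\cap H^1_0(\Omega)$, checks coercivity, applies Lax--Milgram, and then invokes standard regularity to upgrade the weak solution to $\mathcal D(\mathcal A)$. You instead diagonalize: identifying $\mathcal D(\mathcal A)=\mathcal D(L^2)$ for the Dirichlet Laplacian $L$, you realize $\mathcal A=p(L)$ with $p(s)=-s^2+\nu s$, which gives self-adjointness for free from the functional calculus and maximality by an explicit eigenfunction expansion of the resolvent. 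Your approach buys an explicit spectral description of $\mathcal A$ (useful later in the paper anyway) and avoids Lax--Milgram, while the paper's variational route avoids any appeal to the spectral identification of the domain. Note, however, that both proofs hinge on the same nontrivial point, which you correctly flag: the inclusion $\mathcal D(L^2)\subset H^4(\Omega)$ (equivalently, the paper's ``standard arguments'' upgrading the weak solution) is an $H^4$ elliptic regularity statement up to the boundary, and here $\Omega=(0,a)\times\Omega_y$ is a cylinder whose boundary has edges, so ``smooth-domain'' regularity does not apply verbatim; one should justify it, e.g., by the tensor-product structure of the Dirichlet eigenfunctions (separation of variables in $x$ and $y$, or odd reflection in the $x$-variable), which is compatible with your spectral framework. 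With that caveat made explicit, your proof is complete and arguably cleaner.
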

\begin{proof}
The first property is clear. To prove that $\mathcal A$ is quasi-dissipative, let us compute by integration by parts and Young inequality
\begin{align*}
(\mathcal A u, u)_{L^2(\Omega)}&=-\int_{\Omega}u\Delta^2u-\nu\int_{\Omega}u \Delta u = -\int_{\Omega}|\Delta u|^2-\nu\int_{\Omega}u \Delta u \\
&\leq -\frac{1}{2}\int_{\Omega}|\Delta u|^2+\frac{\nu^2}{2}\int_{\Omega}|u|^2 \leq \frac{\nu^2}{2}\int_{\Omega}|u|^2,
\end{align*}
which proves our claim. 

For the maximality, we will see that for some $\lambda$ large enough, then $R(\lambda I-\mathcal A)=L^2(\Omega)$. To prove it, we will show that for any $f\in L^2(\Omega)$ and any $\lambda>\frac{\nu^2}{2}$, there is $u\in \mathcal D(\mathcal A)$ satisfying the boundary-value elliptic problem

\begin{equation*}
\lambda u+ \Delta^2 u+\nu \Delta u=f \text{ in } \Omega, \quad u=\Delta u=0 \text{ in } \partial \Omega.
\end{equation*}
Let us define the bilinear form $a:H^2(\Omega)\cap H_0^1(\Omega)\times H^2(\Omega)\cap H_0^1(\Omega)\to \mathbb R$ defined by
\begin{equation*}
a(u,v)=\lambda u+ \int_{\Omega}\Delta u \Delta v+\nu \int_{\Omega}\Delta u v, \qquad u,v\in H^2(\Omega)\cap H_0^1(\Omega)
\end{equation*}
Clearly, $a$ is continuous and, moreover, for any $u\in H^2(\Omega)\cap H_0^1(\Omega)$, we can argue as above and see that
\begin{equation*}
a(u,u)\geq \frac{1}{2}\int_{\Omega}|\Delta u|^2+\left(\lambda-\frac{\nu^2}{2}\right)\int_{\Omega}|u|^2 \geq c_0\|u\|^2_{H^2(\Omega)\cap H_0^1(\Omega)},
\end{equation*}
for some positive constant $c_0$. Therefore, by Lax-Milgram Lemma, we have that for any $f\in L^2(\Omega)$, there is a unique $u\in H^2(\Omega)\cap H_0^1(\Omega)$ such that $a(u,v)=\int_{\Omega}fv$ for all $v\in H^2(\Omega)\cap H_0^1(\Omega)$. By standard arguments, it can be derived that $u\in \mathcal D(\mathcal A)$, and thus $R(\lambda I-\mathcal A)=L^2(\Omega)$. This ends the proof.
\end{proof}

In turn, this yields the well-posedness of the following uncontrolled equation 
\begin{equation}\label{lin_NKS_cyl_homo}
	\begin{cases}
		\partial_t u+\Delta^2 u+\nu\Delta u=f &  \text{ in } (0,T)\times \Omega,\\
		u=0, \quad \Delta u=0 & \text{ on } (0,T)\times \partial \Omega, \\
		u(0)=u_0 & \text{ in } \Omega.
	\end{cases}
\end{equation}
To ease the reading, from now on, we denote by $\mathcal H(\Omega):= H^2(\Omega)\cap H_0^1(\Omega)$ and $\mathcal H^\prime(\Omega)$ its dual (with respect to the pivot space $L^2(\Omega)$). The result reads as follows.
\begin{proposition}\label{prop:wp_noncontr}
For any $u_0\in L^2(\Omega)$ and $f\in L^2(0,T;\mathcal H'(\Omega))$, there exists a unique solution to \eqref{lin_NKS_gen} (with $q\equiv 0$) such that $u\in C([0,T];L^2(\Omega))\cap L^2(0,T;\mathcal H(\Omega))$. Moreover, there exists $C>0$ independent of $f$, $T$, and $u_0$, such that
\begin{equation}\label{eq:apriori}
\|u\|_{C([0,T];L^2(\Omega))}+\|u\|_{L^2(0,T;\mathcal H(\Omega))}+\|\pa_t u\|_{L^2(0,T;\mathcal H^\prime(\Omega))}\leq Ce^{CT}\left(\|u_0\|_{L^2(\Omega)}+\|f\|_{L^2(0,T;\mathcal H^\prime(\Omega))}\right).
\end{equation}
\end{proposition}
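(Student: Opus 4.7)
The plan is to recast \eqref{lin_NKS_cyl_homo} as an abstract parabolic variational equation in the Gelfand triple $\mathcal{H}(\Omega) \hookrightarrow L^2(\Omega) \hookrightarrow \mathcal{H}^\prime(\Omega)$ and then invoke the classical Lions existence theorem for variational parabolic problems. I would introduce the bilinear form
\[
a(u,v) := \int_\Omega \Delta u\, \Delta v \, dx - \nu \int_\Omega \nabla u \cdot \nabla v \, dx, \qquad u,v \in \mathcal{H}(\Omega),
\]
which is the weak realization of $-\mathcal{A}$; here the Dirichlet condition $u=0$ is built into $\mathcal{H}(\Omega)$, while $\Delta u=0$ appears as the natural boundary condition of $a$. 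Continuity of $a$ on $\mathcal{H}(\Omega)\times \mathcal{H}(\Omega)$ is immediate, and the very integration-by-parts computation already performed in \Cref{lem:opA}, combined with Young's inequality, yields the G\aa rding-type estimate
\[
a(u,u) + \frac{\nu^2}{2} \|u\|_{L^2(\Omega)}^2 \geq \frac{1}{2} \|\Delta u\|_{L^2(\Omega)}^2 \geq c_0 \|u\|_{\mathcal{H}(\Omega)}^2,
\]
where the last inequality uses that $u \mapsto \|\Delta u\|_{L^2}$ is an equivalent norm on $\mathcal{H}(\Omega)$, a consequence of standard elliptic regularity for $-\Delta$ with Dirichlet data on the smooth domain $\Omega$.

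With continuity and coercivity in place, the standard variational parabolic theory of J.-L. Lions (see, e.g., Dautray--Lions Vol.~5, or Ern--Guermond) provides, for every $u_0 \in L^2(\Omega)$ and $f \in L^2(0,T;\mathcal{H}^\prime(\Omega))$, a unique
\[
u \in L^2(0,T;\mathcal{H}(\Omega)) \qquad \text{with} \qquad \partial_t u \in L^2(0,T;\mathcal{H}^\prime(\Omega)),
\]
satisfying $u(0)=u_0$ together with
\[
\langle \partial_t u(t), v\rangle_{\mathcal{H}^\prime,\mathcal{H}} + a(u(t),v) = \langle f(t), v\rangle_{\mathcal{H}^\prime,\mathcal{H}} \qquad \forall\, v \in \mathcal{H}(\Omega),\ \text{a.e. } t\in (0,T).
\]
The Lions--Magenes embedding $L^2(0,T;\mathcal{H}(\Omega))\cap H^1(0,T;\mathcal{H}^\prime(\Omega)) \hookrightarrow C([0,T];L^2(\Omega))$ then yields the time-continuity and makes the initial trace meaningful, while uniqueness follows from linearity and the energy estimate derived below applied to the difference of two solutions.

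To obtain \eqref{eq:apriori}, I would test the variational equation against $u(t)$, use $\langle \partial_t u, u\rangle = \tfrac{1}{2}\tfrac{d}{dt}\|u\|_{L^2}^2$, the coercivity of $a$, and Young's inequality to absorb the $\mathcal{H}$-norm coming from $\langle f, u\rangle$, arriving at
\[
\frac{d}{dt} \|u(t)\|_{L^2(\Omega)}^2 + c_0 \|u(t)\|_{\mathcal{H}(\Omega)}^2 \leq \nu^2 \|u(t)\|_{L^2(\Omega)}^2 + C \|f(t)\|_{\mathcal{H}^\prime(\Omega)}^2.
\]
Gronwall's lemma then produces the $C([0,T];L^2)$ and $L^2(0,T;\mathcal{H})$ bounds, and the $L^2(0,T;\mathcal{H}^\prime)$ bound on $\partial_t u$ is read off directly from the variational identity by estimating each term in the $\mathcal{H}^\prime$-norm. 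The only delicate point in the whole argument is the coercivity step, which rests on the equivalence of norms mentioned above; once that is in hand, the rest is essentially textbook and requires no new ingredient beyond what has already appeared in the proof of \Cref{lem:opA}.
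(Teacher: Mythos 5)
Your argument is correct, and in substance it is the same route the paper takes, only written out instead of cited: the paper deduces semigroup generation from \Cref{lem:opA} and then invokes the abstract parabolic results of Renardy--Rogers (Theorem 11.3 and Lemma 11.4), which is precisely the Lions/Galerkin variational machinery you implement by hand with the Gelfand triple $\mathcal H(\Omega)\hookrightarrow L^2(\Omega)\hookrightarrow \mathcal H'(\Omega)$, a G\aa rding inequality obtained by the same integration-by-parts and Young computation as in \Cref{lem:opA}, the energy estimate, and Gronwall. Your version buys self-containedness and an explicit tracking of the constant $Ce^{CT}$ in \eqref{eq:apriori}; the paper's version buys brevity. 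One caveat: your justification of the norm equivalence $\|\Delta u\|_{L^2}\simeq\|u\|_{H^2}$ on $H^2(\Omega)\cap H^1_0(\Omega)$ appeals to ``standard elliptic regularity on the smooth domain $\Omega$'', but $\Omega=(0,a)\times\Omega_y$ is not smooth: it has edges along $\{0\}\times\partial\Omega_y$ and $\{a\}\times\partial\Omega_y$. The equivalence is nevertheless valid here, e.g.\ because these edges are convex (right angles), or more directly by expanding $u=\sum_j u_j(x)\Psi_j^{\Omega_y}(y)$ in the Dirichlet eigenfunctions of $\Omega_y$, which shows that $\|\partial_x^2u\|_{L^2}$, $\|\Delta_y u\|_{L^2}$ and the mixed derivatives $\|\partial_x\nabla_y u\|_{L^2}$ are all controlled by $\|\Delta u\|_{L^2}$, with the remaining $y$-regularity supplied by the smoothness of $\Omega_y$. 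So this is a fixable misattribution rather than a gap; note that the paper itself relies on comparable unstated regularity when asserting $u\in\mathcal D(\mathcal A)$ ``by standard arguments'' in \Cref{lem:opA}.
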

\begin{proof}
By \Cref{lem:opA} and \cite[Theorem 12.22]{RR04}, $(\mathcal A,D(\mathcal A))$ generates a strongly continuous semigroup on $L^2(\Omega)$. Then, by Theorem 11.3 and Lemma 11.4 from \cite{RR04}, system \eqref{lin_NKS_cyl_homo} has a unique solution $u\in C([0,T];L^2(\Omega))\cap L^2(0,T;\mathcal H(\Omega))\cap H^1(0,T;\mathcal H^\prime(\Omega))$ satisfying the a priori estimate \eqref{eq:apriori}. 
\end{proof}

As a next step, we introduce the adjoint equation to \eqref{lin_NKS_cyl}, more precisely
\begin{equation}\label{lin_NKS_adj}
	\begin{cases}
		-\partial_t \sigma +\Delta^2 \sigma +\nu\Delta \sigma=0 &  \text{ in } (0,T)\times \Omega,\\
		\sigma=0, \quad \Delta \sigma=0 & \text{ on } (0,T)\times \partial \Omega, \\
		\sigma(T)=\sigma_T & \text{ in } \Omega.
	\end{cases}
\end{equation}
where $\sigma_T\in L^2(\Omega)$ is a terminal datum. The following result holds. 

\begin{proposition}\label{lem:reg_backward}
For any $\sigma_T\in L^2(\Omega)$, there exists a unique solution to \eqref{lin_NKS_adj} such that $\sigma\in C([0,T];L^2(\Omega))\cap L^2(0,T;\mathcal H(\Omega))$ and, in addition, there is $C>0$ independent of $T$ and $\sigma_T$ such that
\begin{equation}\label{eq:apriori_adjoint}
\|\sigma\|_{C([0,T];L^2(\Omega))}+\|\sigma\|_{L^2(0,T;\mathcal H(\Omega))}+\|\pa_t\sigma\|_{L^2(0,T;\mathcal H^\prime(\Omega))}\leq Ce^{CT}\|\sigma_T\|_{L^2(\Omega)}.
\end{equation}
\end{proposition}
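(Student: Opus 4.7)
The plan is to reduce the backward equation \eqref{lin_NKS_adj} to the forward equation \eqref{lin_NKS_cyl_homo} (with $f \equiv 0$) via a time reversal, and then invoke \Cref{prop:wp_noncontr}. Concretely, I would set $\tilde\sigma(t,x) := \sigma(T-t,x)$ for $(t,x) \in (0,T) \times \Omega$. A direct computation gives $\partial_t \tilde\sigma(t,x) = -(\partial_t \sigma)(T-t,x)$, so the equation $-\partial_t \sigma + \Delta^2 \sigma + \nu \Delta \sigma = 0$ evaluated at $T-t$ translates into
\begin{equation*}
\partial_t \tilde\sigma + \Delta^2 \tilde\sigma + \nu \Delta \tilde\sigma = 0 \quad \text{in } (0,T)\times \Omega,
\end{equation*}
with $\tilde\sigma = \Delta \tilde\sigma = 0$ on $(0,T)\times \partial \Omega$ and $\tilde\sigma(0,\cdot) = \sigma_T$. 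This is precisely \eqref{lin_NKS_cyl_homo} with $f \equiv 0$ and initial datum $\sigma_T \in L^2(\Omega)$.

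Then I would apply \Cref{prop:wp_noncontr} to obtain the existence and uniqueness of $\tilde\sigma \in C([0,T];L^2(\Omega)) \cap L^2(0,T;\mathcal H(\Omega))$ with $\partial_t \tilde\sigma \in L^2(0,T;\mathcal H'(\Omega))$, together with the a priori estimate
\begin{equation*}
\|\tilde\sigma\|_{C([0,T];L^2(\Omega))} + \|\tilde\sigma\|_{L^2(0,T;\mathcal H(\Omega))} + \|\partial_t \tilde\sigma\|_{L^2(0,T;\mathcal H'(\Omega))} \leq C e^{CT} \|\sigma_T\|_{L^2(\Omega)}.
\end{equation*}
Undoing the change of variables, $\sigma(t,\cdot) = \tilde\sigma(T-t,\cdot)$ inherits the same regularity, and since the norms appearing above are invariant under the involution $t \mapsto T-t$ (one just performs a change of variables in the time integrals), the estimate \eqref{eq:apriori_adjoint} follows immediately for $\sigma$. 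Uniqueness for $\sigma$ is likewise inherited from uniqueness for $\tilde\sigma$.

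There is no substantive obstacle here; this is essentially a bookkeeping argument once \Cref{lem:opA} and \Cref{prop:wp_noncontr} are in hand. The only minor point to double-check is that the operator $\mathcal A = -\Delta^2 - \nu \Delta$ is self-adjoint (already established in \Cref{lem:opA}), so that the semigroup governing the backward problem is literally the same as the one governing the forward problem, making the time-reversal argument exact rather than merely formal.
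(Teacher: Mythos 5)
Your argument is correct and coincides with the paper's own proof, which is exactly the observation that \eqref{lin_NKS_adj} reduces to \eqref{lin_NKS_cyl_homo} (with $f\equiv 0$) under the time reversal $t\mapsto T-t$, so that \Cref{prop:wp_noncontr} yields existence, uniqueness, and the estimate \eqref{eq:apriori_adjoint}. The remark on self-adjointness is not even needed: since the spatial operator involves no term that changes sign under time reversal, the reduction is exact in any case.
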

The proof of this result can be deduced from \Cref{prop:wp_noncontr} and a change of variable in time.  This motivates the following notion of solution to \eqref{lin_NKS_gen}.

\begin{definition}[Solution by transposition] Let $u_0\in L^2(\Omega)$, $f\in L^2(0,T;\mathcal H^\prime(\Omega))$, and $q\in L^2(0,T;L^2(\Gamma))$ be given. A function $u\in C([0,T];L^2(\Omega))$ is said to be a solution by transposition to \eqref{lin_NKS_gen} if for each $\sigma_\tau\in L^2(\Omega)$ and for all $\tau\in(0,T]$ one has
\begin{equation}\label{iden_transposition}
\langle u(\tau), \sigma_\tau \rangle_{L^2(\Omega)}=\langle u_0, \sigma(0,\cdot) \rangle_{L^2(\Omega)}+\int_0^{\tau}\langle f(s),\sigma(s)\rangle_{\mathcal H^\prime(\Omega),\mathcal H(\Omega)} ds +\int_{\partial \Omega}\mathbf{1}_{\Gamma} q\frac{\partial \sigma}{\partial \eta}dS,
\end{equation}
where $\eta$ is the unitary outward normal vector, $dS$ is the surface measure, and $\sigma$ is a solution to \eqref{lin_NKS_adj} with $\sigma(\tau,\cdot)=\sigma_{\tau}(\cdot)$ in $\Omega$. In \eqref{iden_transposition}, $\langle \cdot,\cdot \rangle_{\mathcal H^\prime(\Omega),\mathcal H(\Omega)}$ stands for the duality pairing between $\mathcal H(\Omega)$ and $\mathcal H^\prime(\Omega)$.
\end{definition}

We are in position to state our main well-posedness result. 

\begin{proposition}
For every $u_0\in L^2(\Omega)$ and $q\in L^2(0,T;L^2(\Gamma))$, equation  \eqref{lin_NKS_cyl} has a unique solution $u\in C([0,T];L^2(\Omega))$ defined in the sense of transposition and satisfies the following estimate
\begin{align}\notag 
\|u\|_{C([0,T];L^2(\Omega))}&+\|u\|_{L^2(0,T;\mathcal H(\Omega))}+\|\pa_t u\|_{L^2(0,T;\mathcal H^\prime(\Omega))}\\ \label{eq:apriori_full}
&\quad \leq Ce^{CT}\left(\|u_0\|_{L^2(\Omega)}+\|q\|_{L^2(0,T;L^2(\Gamma))}+\|f\|_{L^2(0,T;\mathcal H^\prime(\Omega))}\right).
\end{align}
\end{proposition}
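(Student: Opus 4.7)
The plan is to build $u$ as a solution by transposition using the adjoint well-posedness from \Cref{lem:reg_backward}, and then to upgrade the regularity to $L^2(0,T;\mathcal H(\Omega))\cap H^1(0,T;\mathcal H^\prime(\Omega))$ via a lifting and density argument. Uniqueness is immediate: if two candidates $u_1,u_2\in C([0,T];L^2(\Omega))$ both satisfy \eqref{iden_transposition} for every $\sigma_\tau\in L^2(\Omega)$, then $\langle u_1(\tau)-u_2(\tau),\sigma_\tau\rangle_{L^2(\Omega)}=0$ for all such $\sigma_\tau$, hence $u_1\equiv u_2$.

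For existence, the key ingredient is a trace estimate for the normal derivative of the adjoint state. Since $\sigma\in L^2(0,T;\mathcal H(\Omega))$ vanishes on $\partial\Omega$, standard trace theory gives $\partial_\eta\sigma\in L^2(0,T;H^{1/2}(\partial\Omega))\hookrightarrow L^2(0,T;L^2(\partial\Omega))$ with $\|\partial_\eta\sigma\|_{L^2(0,T;L^2(\partial\Omega))}\leq C\|\sigma\|_{L^2(0,T;\mathcal H(\Omega))}\leq Ce^{CT}\|\sigma_\tau\|_{L^2(\Omega)}$, the second inequality coming from \eqref{eq:apriori_adjoint} (adapted to terminal time $\tau$). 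Combined with the control of $\sigma(0,\cdot)$ and $\sigma$ in the norms of $L^2(\Omega)$ and $\mathcal H(\Omega)$ respectively, the right-hand side of \eqref{iden_transposition} defines, for each fixed $\tau\in(0,T]$, a bounded linear functional of $\sigma_\tau\in L^2(\Omega)$ whose norm is bounded by $Ce^{CT}\bigl(\|u_0\|_{L^2(\Omega)}+\|f\|_{L^2(0,T;\mathcal H^\prime(\Omega))}+\|q\|_{L^2(0,T;L^2(\Gamma))}\bigr)$. Riesz representation then yields a unique $u(\tau)\in L^2(\Omega)$ realising this functional, and continuity in $\tau$ follows from the continuous dependence of $\sigma(0,\cdot)$ and of the boundary integral on $\tau$ and $\sigma_\tau$. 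This delivers $u\in C([0,T];L^2(\Omega))$ together with the $C([0,T];L^2(\Omega))$ part of the estimate \eqref{eq:apriori_full}.

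The main obstacle is upgrading this $L^2$-in-space regularity to $L^2(0,T;\mathcal H(\Omega))\cap H^1(0,T;\mathcal H^\prime(\Omega))$. My plan is a density/lifting argument. Approximate $(u_0,f,q)$ by smooth data $(u_0^n,f^n,q^n)$ and construct a smooth lifting $\psi^n$ with $\psi^n|_{\partial\Omega}=0$ and $\Delta\psi^n|_{\partial\Omega}=\mathbf 1_\Gamma q^n$, for instance by two successive Poisson solves, so that the assignment $q^n\mapsto\psi^n$ is continuous in the relevant norms. Setting $v^n:=u^n-\psi^n$, the function $v^n$ solves the homogeneous-boundary problem \eqref{lin_NKS_cyl_homo} with initial datum $u_0^n-\psi^n(0)$ and source term $\tilde f^n:=f^n-\partial_t\psi^n-\Delta^2\psi^n-\nu\Delta\psi^n$, so \Cref{prop:wp_noncontr} delivers $v^n$ in the desired functional class together with the a priori bound. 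The technical heart of the step is to verify that the map $\phi\mapsto\int_\Gamma q\,\partial_\eta\phi\,dS$ defines an element of $\mathcal H^\prime(\Omega)$ with norm controlled by $\|q\|_{L^2(\Gamma)}$; this allows one to choose $\psi^n$ so that $\tilde f^n$ is uniformly bounded in $L^2(0,T;\mathcal H^\prime(\Omega))$ by $\|f^n\|_{L^2(0,T;\mathcal H^\prime(\Omega))}+\|q^n\|_{L^2(0,T;L^2(\Gamma))}$. Passing to the limit $n\to\infty$ produces a function in $C([0,T];L^2(\Omega))\cap L^2(0,T;\mathcal H(\Omega))\cap H^1(0,T;\mathcal H^\prime(\Omega))$ satisfying \eqref{eq:apriori_full}, and since the transposition identity \eqref{iden_transposition} is preserved in the limit, the resulting limit coincides with the transposition solution constructed above, concluding the proof.
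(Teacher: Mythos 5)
Your first half—uniqueness from the transposition identity, and existence of $u(\tau)\in L^2(\Omega)$ via the trace bound $\|\partial_\eta\sigma\|_{L^2(0,T;L^2(\partial\Omega))}\leq C\|\sigma\|_{L^2(0,T;\mathcal H(\Omega))}\leq Ce^{CT}\|\sigma_\tau\|_{L^2(\Omega)}$ and Riesz representation—is exactly the paper's first step (continuity of the map $\sigma_T\mapsto\partial_\eta\sigma$ plus the classical transposition scheme of \cite[Section 2.3]{Cor07}), and it is fine. The genuine gap is in your regularity upgrade. In the lifting step the modified source is $\tilde f^n=f^n-\partial_t\psi^n-\Delta^2\psi^n-\nu\Delta\psi^n$, where $\psi^n(t)$ is obtained from $q^n(t)$ by elliptic solves at each \emph{fixed} time; hence $\partial_t\psi^n$ is the lifting of $\partial_t q^n$, and its $L^2(0,T;\mathcal H'(\Omega))$ norm is controlled only by a norm of $\partial_t q^n$, not by $\|q^n\|_{L^2(0,T;L^2(\Gamma))}$. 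The claimed uniform bound on $\tilde f^n$ is therefore false: for smooth approximations $q^n$ oscillating in time the constant blows up, and the limit passage cannot deliver \eqref{eq:apriori_full}. Your observation that $\phi\mapsto\int_\Gamma q\,\partial_\eta\phi\,dS$ defines an element of $\mathcal H'(\Omega)$ of norm $\leq C\|q\|_{L^2(\Gamma)}$ does not repair this, since it says nothing about the time derivative of the lifting.

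That observation is, however, precisely what makes a correct argument possible without any lifting: since $\Delta u=\mathbf 1_\Gamma q$ is the \emph{natural} boundary condition of the form $a(u,\phi)=\int_\Omega\Delta u\,\Delta\phi-\nu\int_\Omega\nabla u\cdot\nabla\phi$ on $\mathcal H(\Omega)$, the controlled problem admits the variational formulation $\frac{d}{dt}\langle u,\phi\rangle_{L^2(\Omega)}+a(u,\phi)=\langle f,\phi\rangle_{\mathcal H'(\Omega),\mathcal H(\Omega)}+\int_\Gamma q\,\partial_\eta\phi\,dS$ for all $\phi\in\mathcal H(\Omega)$, i.e. an evolution equation with homogeneous variational boundary conditions and source $f+Bq\in L^2(0,T;\mathcal H'(\Omega))$, where $\|Bq(t)\|_{\mathcal H'(\Omega)}\leq C\|q(t)\|_{L^2(\Gamma)}$. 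Lions' variational theory (this is essentially what the paper does when it invokes \cite[Section~11.1]{RR04} ``incorporating the terms corresponding to the control'') then yields $u\in C([0,T];L^2(\Omega))\cap L^2(0,T;\mathcal H(\Omega))\cap H^1(0,T;\mathcal H'(\Omega))$ together with \eqref{eq:apriori_full}, and one verifies that this solution satisfies \eqref{iden_transposition}, hence coincides with your transposition solution; alternatively, the $L^2(0,T;\mathcal H(\Omega))$ bound follows by duality, pairing $u$ with solutions of the adjoint problem driven by a source $g\in L^2(0,T;\mathcal H'(\Omega))$ with zero terminal datum. Either of these replaces your lifting step; as written, that step does not prove the estimate.
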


\begin{proof}
The proof is based on well-known arguments. The first step consists on an application of a classical argument for abstract evolution equations, see \cite[Section 2.3]{Cor07} and it is based on the continuity of the mapping
\begin{equation}\label{map_Lambda}
\begin{split}
\Lambda: L^2(\Omega)&\to L^2(0,T;L^2(\Gamma) \\
\sigma_T &\mapsto \tfrac{\partial \sigma}{\partial \eta}
\end{split}
\end{equation}
where $\sigma$ is the solution to the adjoint system \eqref{lin_NKS_adj} with given terminal datum $\sigma_T$. In turn, the continuity of \eqref{map_Lambda} is a direct consequence of the regularity in \Cref{lem:reg_backward} and usual trace theorems (see e.g. \cite[Chapter 4, Section 2]{LM72}). The proof of the energy estimate \eqref{eq:apriori_full} can be done by following Section 11.1 in \cite{RR04} and incorporating the terms corresponding to the control. For brevity, we skip the details.    
\end{proof}

\section{Boundary controllability problem of the one-dimensional KS equation}\label{1d}
This section is devoted to the study of the null controllability of the 1-D KS equation \eqref{KS-oned_cyl}, i.e., \Cref{null control 1d_cyl}. We begin by stating an existence result for this equation, whose proof follows the same approach as that presented in \cref{sec:framework}.
\subsection{Linearized operator and well-posedness}
Let us first define the unbounded linear operator $A : \mathcal D(A)\subset L^2(\Omega_x)\mapsto L^2(\Omega_x)$ as follows:
\begin{equation*}
	\begin{cases}
		\mathcal{D}( A) = \{u \in H^4(\Omega_x) : u(0) = u(a) = \pa_x^2  u(0) = \pa_x^2 u(a) = 0 \},\\
		Au =-\partial_{x}^4 u-\left(\nu-2\mu^{\oy}_j\right)\partial_{x}^2 u , \quad \forall \, u \in \mathcal D(A) \end{cases}
\end{equation*}
$A$ is a self-adjoint operator with compact resolvent.
Simple computations give that the
eigenvalues of $A$ are
\begin{equation}\label{eigenv}
	\lambda^{\ox}_k = -\frac{k^4\pi^4}{a^4}+\left(\nu-2\mu^{\oy}_j\right) \frac{k^2\pi^2}{a^2}, \quad   \forall \, k \in \N
\end{equation}
and the eigenfunction corresponding to the eigenvalue $\lambda^{\ox}_k$
is
\begin{equation*}\Psi^{\ox}_k (x) = \sqrt{2}
\sin\left(\frac{k\pi x}{a} \right), \quad k\in \N.\end{equation*}
Next, we write the adjoint system of \eqref{KS-oned_cyl} as follows:
\begin{equation}\label{adj_cyl}
	\begin{cases}
		-\partial_t \phi+\partial_{x}^4 \phi+\left(\nu-2\mu^{\oy}_j\right)\partial_{x}^2 \phi=0 & \quad  t\in (0,T), \;\; x\in \Omega_x,\\
		\phi(t,0)=\phi(t,a)=0,\quad  \partial_{x}^2 \phi(t,0)=\partial_{x}^2 \phi(t,a)=0 & \quad t\in (0,T), \\
		\phi(T,x)=\phi_T(x) & \quad x\in \Omega_x.
	\end{cases}
\end{equation}
\begin{proposition}\label{lem:reg_backward1}
	For any $\phi_T\in L^2(\Omega_x)$, there exists a unique solution to \eqref{adj_cyl} such that $\phi\in C([0,T];L^2(\Omega_x))\cap L^2(0,T;\mathcal H(\Omega_x))$ and, in addition, there is $C>0$ independent of $T$ and $\phi_T$ such that
	\begin{equation}\label{eq:apriori_adjoint1}
		\|\phi\|_{C([0,T];L^2(\Omega))}+\|\phi\|_{L^2(0,T;\mathcal H(\Omega_x))}+\|\pa_t\phi\|_{L^2(0,T;\mathcal H^\prime(\Omega_x))}\leq Ce^{CT}\|\phi_T\|_{L^2(\Omega_x)}.
	\end{equation}
\end{proposition}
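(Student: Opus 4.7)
The plan is to mirror, in one spatial variable, the argument of \Cref{sec:framework}. The key step is to show that the unbounded operator $A$ generates a strongly continuous semigroup on $L^2(\Omega_x)$, which yields well-posedness of the associated forward Cauchy problem, and then to recover \eqref{adj_cyl} via the time reversal $\tilde{\phi}(t,x) := \phi(T-t,x)$.

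First I would establish that $A$ is self-adjoint, quasi-dissipative and maximal, in direct analogy with \Cref{lem:opA}. Self-adjointness follows from two integrations by parts, whose boundary contributions vanish thanks to the Navier conditions $u(0)=u(a)=\partial_x^2 u(0)=\partial_x^2 u(a)=0$ built into $\mathcal{D}(A)$. For quasi-dissipativity, the same integrations by parts give
\begin{equation*}
(Au,u)_{L^2(\Omega_x)} = -\int_{\Omega_x} |\partial_x^2 u|^2\, dx - (\nu-2\mu^{\oy}_j)\int_{\Omega_x} u\, \partial_x^2 u\, dx,
\end{equation*}
and Young's inequality applied to the cross term yields $(Au,u) \leq -\tfrac{1}{2}\|\partial_x^2 u\|_{L^2}^2 + \tfrac{(\nu-2\mu^{\oy}_j)^2}{2}\|u\|_{L^2}^2$. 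Maximality of $\lambda I - A$ for $\lambda$ sufficiently large follows from Lax--Milgram applied to the continuous and coercive bilinear form
\begin{equation*}
a(u,v) := \lambda (u,v)_{L^2(\Omega_x)} + \int_{\Omega_x}\partial_x^2 u\, \partial_x^2 v\, dx + (\nu-2\mu^{\oy}_j)\int_{\Omega_x} \partial_x^2 u\, v\, dx, \qquad u,v \in \mathcal{H}(\Omega_x),
\end{equation*}
followed by a standard elliptic bootstrap placing the weak solution in $\mathcal{D}(A)$, exactly as in \Cref{lem:opA}.

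Once these three properties are in hand, the Lumer--Phillips theorem (alternatively \cite[Theorem 12.22]{RR04}) ensures that $(A,\mathcal{D}(A))$ generates a strongly continuous semigroup on $L^2(\Omega_x)$. Invoking \cite[Theorem 11.3 and Lemma 11.4]{RR04} as in \Cref{prop:wp_noncontr} then produces, for any $w_0 \in L^2(\Omega_x)$, a unique solution $w \in C([0,T];L^2(\Omega_x)) \cap L^2(0,T;\mathcal{H}(\Omega_x)) \cap H^1(0,T;\mathcal{H}'(\Omega_x))$ of the forward problem $\partial_t w = A w$, $w(0)=w_0$, together with an a priori estimate of the same form as \eqref{eq:apriori} with growth $e^{CT}$. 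Setting $w(t,x) := \phi(T-t,x)$ and $w_0 := \phi_T$ converts this forward system into the backward problem \eqref{adj_cyl}, and \eqref{eq:apriori_adjoint1} follows verbatim from the forward estimate.

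The only subtlety worth flagging, rather than a real obstacle, is that the quasi-dissipativity constant, and hence the constant $C$ in \eqref{eq:apriori_adjoint1}, depends on $j$ through the coefficient $\nu-2\mu^{\oy}_j$. This is harmless at the well-posedness level, but it foreshadows the genuine difficulty to come in \Cref{null control 1d_cyl}, where the $j$-dependence of the control cost \eqref{cost_cyl} must be tracked carefully and later absorbed by the Lebeau--Robbiano summation in \Cref{sec:cylinder}.
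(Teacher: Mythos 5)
Your proposal is correct and follows essentially the same route as the paper: the paper states this proposition without a separate proof, noting only that it "follows the same approach as that presented in" the well-posedness section, i.e., the operator is shown to be self-adjoint, quasi-dissipative and maximal, semigroup generation and the estimates are obtained via \cite[Theorems 12.22, 11.3 and Lemma 11.4]{RR04}, and the backward system \eqref{adj_cyl} is recovered by a change of variable in time — exactly your argument. Your remark on the $j$-dependence of the constant is consistent with the statement, which only requires $C$ to be independent of $T$ and $\phi_T$.
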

%The proof of this result can be deduced from \Cref{prop:wp_noncontr} and a change of variable in time.  This motivates the following notion of solution to \eqref{lin_NKS_gen}.
Let us define the following notion of solution to \eqref{KS-oned_cyl}.
\begin{definition}[Solution by transposition] Let $v_0\in L^2(\Omega_x)$, and $q\in L^2(0,T)$ be given. A function $v\in C([0,T];L^2(\Omega_x))$ is said to be a solution by transposition to \eqref{KS-oned_cyl} if for each $\phi_\tau\in L^2(\Omega_x)$ and for all $\tau\in(0,T]$ one has
	\begin{equation}\label{iden_transposition1}
		\langle v(\tau), \phi_\tau \rangle_{L^2(\Omega_x)}=\langle v_0, \phi(0,\cdot) \rangle_{L^2(\Omega_x)} -\int_{0}^{\tau} q(s)\phi_x(s,0) ds,
	\end{equation}
\end{definition}
We are in position to state the well-posedness result for \eqref{KS-oned_cyl}. 
\begin{proposition}
	For every $v_0\in L^2(\Omega_x)$ and $q\in L^2(0,T)$, equation  \eqref{KS-oned_cyl} has a unique solution $v\in C([0,T];L^2(\Omega_x))$ defined in the sense of transposition and satisfies the following estimate
	\begin{align*} 
		\|v\|_{C([0,T];L^2(\Omega_x))}&+\|v\|_{L^2(0,T;\mathcal H(\Omega_x))}+\|\pa_t v\|_{L^2(0,T;\mathcal H^\prime(\Omega_x))}\\ 
		&\quad \leq Ce^{CT}\left(\|v_0\|_{L^2(\Omega_x)}+\|q\|_{L^2(0,T)}\right).
	\end{align*}
\end{proposition}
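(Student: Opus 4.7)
The strategy is to mirror the argument used in \Cref{sec:framework} for the higher-dimensional case, working this time with the one-dimensional operator $A$ and its adjoint problem \eqref{adj_cyl}. First, I would establish the existence and uniqueness of the solution by transposition. To do so, the key ingredient is the continuity of the trace map
\begin{equation*}
\Lambda_1 : L^2(\Omega_x) \to L^2(0,T), \qquad \phi_T \mapsto \pa_x\phi(\cdot,0),
\end{equation*}
where $\phi$ is the unique solution of \eqref{adj_cyl} with terminal datum $\phi_T$. This continuity follows from the regularity stated in \Cref{lem:reg_backward1} together with standard trace theorems (cf.\ \cite[Chapter 4, Section 2]{LM72}), since $\phi\in L^2(0,T;\mathcal H(\Omega_x))\cap H^1(0,T;\mathcal H'(\Omega_x))$ admits a well-defined trace of $\pa_x\phi$ on $\{0\}\times(0,T)$ which is square-integrable in time.

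Given $v_0\in L^2(\Omega_x)$ and $q\in L^2(0,T)$, I would then define, for each $\tau\in(0,T]$, the linear functional
\begin{equation*}
\phi_\tau \mapsto \langle v_0, \phi(0,\cdot)\rangle_{L^2(\Omega_x)} - \int_0^\tau q(s)\,\pa_x\phi(s,0)\,ds
\end{equation*}
on $L^2(\Omega_x)$, where $\phi$ solves \eqref{adj_cyl} on $(0,\tau)$ with final datum $\phi_\tau$. By \Cref{lem:reg_backward1} and the continuity of $\Lambda_1$, this functional is continuous, with norm bounded by $Ce^{CT}(\|v_0\|_{L^2(\Omega_x)}+\|q\|_{L^2(0,T)})$. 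The Riesz representation theorem then yields a unique $v(\tau)\in L^2(\Omega_x)$ satisfying \eqref{iden_transposition1}, and the resulting map $\tau\mapsto v(\tau)$ can be shown to be continuous in $L^2(\Omega_x)$ by standard arguments (e.g., approximating $q$ by smooth controls and using the uniform estimate). Uniqueness is immediate from the identity \eqref{iden_transposition1}, since if $v_0=0$ and $q=0$ then $\langle v(\tau),\phi_\tau\rangle=0$ for every $\phi_\tau\in L^2(\Omega_x)$.

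For the full energy estimate, I would proceed in two stages. First, I would establish the bound on $\|v\|_{C([0,T];L^2(\Omega_x))}$ directly from the transposition identity and the continuity of $\Lambda_1$, as outlined above. Next, to obtain control on $\|v\|_{L^2(0,T;\mathcal H(\Omega_x))}$ and $\|\pa_t v\|_{L^2(0,T;\mathcal H'(\Omega_x))}$, I would approximate the boundary datum $q$ by smooth functions $q_n\in C^\infty_c(0,T)$, use a lifting to reduce to an equation with homogeneous boundary conditions and an $\mathcal H'(\Omega_x)$-valued source term, and apply \Cref{prop:wp_noncontr} adapted to the one-dimensional setting (this is the analogue of Section 11.1 in \cite{RR04}). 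Passing to the limit using the uniform bound gives the claimed estimate for $v$.

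The main technical point is the continuity of the trace $\Lambda_1$, which rests on the parabolic smoothing of the backward adjoint system: although $\phi_T$ is only in $L^2(\Omega_x)$, the instantaneous smoothing pushes $\phi$ into $L^2(0,T;\mathcal H(\Omega_x))$, allowing the boundary trace of $\pa_x\phi$ to be controlled. Beyond this, the rest of the argument is essentially identical to the higher-dimensional proof in \Cref{sec:framework}, and for brevity I would refer the reader to the analogous computations there.
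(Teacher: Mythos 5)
Your proposal is correct and follows essentially the same route as the paper, which simply states that the proof follows the approach of \Cref{sec:framework}: continuity of the trace map $\phi_T\mapsto \pa_x\phi(\cdot,0)$ via \Cref{lem:reg_backward1} and trace results (in 1-D even simpler, since $H^2(\Omega_x)\subset C^1(\overline{\Omega_x})$), the classical transposition argument of \cite[Section 2.3]{Cor07}, and the energy estimate following Section 11.1 of \cite{RR04}. No gaps; your added details (Riesz representation, approximation by smooth controls) are just the standard fleshing-out of that argument.
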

\subsection{Approximate controllability}
We start our study concerning the control problem of \eqref{KS-oned_cyl} by proving the following approximate controllability result:
\begin{theorem}\label{aprx}
	The system \eqref{KS-oned_cyl} is approximately controllable if and only if $\nu\notin \mathcal{N}.$
\end{theorem}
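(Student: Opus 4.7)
The plan is to exploit the standard duality between approximate controllability of \eqref{KS-oned_cyl} and a unique continuation property for the adjoint system \eqref{adj_cyl}. Via the transposition identity \eqref{iden_transposition1}, approximate controllability in $L^2(\Omega_x)$ is equivalent to the following statement: if $\phi$ solves \eqref{adj_cyl} with terminal datum $\phi_T\in L^2(\Omega_x)$ and $\partial_x\phi(t,0)=0$ for a.e.\ $t\in(0,T)$, then $\phi_T\equiv 0$. I would establish this equivalence in one paragraph and then reduce the problem to an exponential-sum uniqueness question.

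Since $A$ is self-adjoint with compact resolvent and the eigenfunctions $\{\Psi_k^{\Omega_x}\}_{k\in\N}$ form an orthonormal basis of $L^2(\Omega_x)$, any solution of \eqref{adj_cyl} admits the expansion
$$\phi(t,x)=\sum_{k\in\N} a_k\, e^{\lambda_k^{\Omega_x}(T-t)}\,\Psi_k^{\Omega_x}(x), \qquad \phi_T=\sum_{k\in\N} a_k\Psi_k^{\Omega_x},$$
so that
$$\partial_x\phi(t,0)=\sum_{k\in\N} a_k\,\frac{\sqrt{2}\,k\pi}{a}\,e^{\lambda_k^{\Omega_x}(T-t)}.$$
The pivotal calculation is to decide when the eigenvalues defined in \eqref{eigenv} coincide. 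A direct manipulation shows that $\lambda_k^{\Omega_x}=\lambda_l^{\Omega_x}$ with $k\neq l$ if and only if
$$\nu=2\mu_j^{\Omega_y}+\frac{\pi^2(k^2+l^2)}{a^2},$$
i.e.\ if and only if $\nu\in\mathcal N$. This is the algebraic fact that matches the statement of the theorem to the spectral picture.

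For the sufficient direction ($\nu\notin\mathcal N$), all $\lambda_k^{\Omega_x}$ are pairwise distinct and satisfy $\lambda_k^{\Omega_x}\to-\infty$. If $\partial_x\phi(\cdot,0)$ vanishes a.e.\ on $(0,T)$, then since the Dirichlet-type series above is analytic in $T-t\in(0,T)$, it vanishes identically on $(0,T)$. A classical uniqueness result for exponential series with distinct real exponents (obtained for instance by multiplying by $e^{-\lambda_k s}$ and letting $s\to\infty$, or by a Müntz-type argument) then forces $a_k\,\tfrac{\sqrt{2}k\pi}{a}=0$ for every $k$; since $k\geq 1$, this gives $a_k=0$ and hence $\phi_T=0$. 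For the necessary direction ($\nu\in\mathcal N$), I would build an explicit counterexample: pick $k\neq l$ with $\lambda_k^{\Omega_x}=\lambda_l^{\Omega_x}$ and set $\phi_T:=l\,\Psi_k^{\Omega_x}-k\,\Psi_l^{\Omega_x}\neq 0$ in $L^2(\Omega_x)$. Then
$$\partial_x\phi(t,0)=\frac{\sqrt{2}\,\pi}{a}\bigl(l\cdot k-k\cdot l\bigr)\,e^{\lambda_k^{\Omega_x}(T-t)}=0,$$
contradicting the unique continuation property and ruling out approximate controllability.

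The main obstacle is the rigorous justification of the linear independence of the exponential family in the sufficient direction; although standard, it requires a clean statement (analyticity of the series in $s=T-t$, together with separation $\lambda_k^{\Omega_x}\to-\infty$) to conclude that all Fourier coefficients vanish from cancellation of the observed trace. Everything else is bookkeeping: checking that the integration-by-parts sign in \eqref{iden_transposition1} matches, verifying that the constructed $\phi_T$ in the necessary part is nontrivial (immediate from the orthonormality of $\{\Psi_k^{\Omega_x}\}$), and noting that no further cases arise since $\partial_x\Psi_k^{\Omega_x}(0)=\sqrt{2}\,k\pi/a$ never vanishes for $k\in\N$.
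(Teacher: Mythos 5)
Your proposal is correct and follows essentially the same route as the paper: expand the observed trace in the eigenbasis, note that $\nu\notin\mathcal N$ makes the eigenvalues $\lambda_k^{\Omega_x}$ pairwise distinct so that a dominant-exponent (multiply by $e^{-\lambda_{k_0}s}$, let $s\to\infty$) argument kills all coefficients, and for $\nu\in\mathcal N$ exhibit the same two-mode combination with equal eigenvalues whose trace at $x=0$ vanishes. The only cosmetic difference is that you phrase unique continuation for the backward adjoint system via the transposition duality, while the paper imposes the zero-observation condition directly on the forward system, which is equivalent here by self-adjointness.
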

\begin{proof}
	Let us recall that the approximate controllability of system \eqref{KS-oned_cyl} 
is equivalent to showing that the solution of  
\begin{equation}\label{KS-oned aprx_cyl}
	\begin{cases}
		\partial_t v+\partial_{x}^4 v+\left(\nu-2\mu^{\oy}_j\right)\partial_{x}^2 v=0 & \quad  t\in (0,T), \;\;x\in \Omega_x,\\[0.3em]
		v(t,0)=v(t,a)=0, \quad \partial_{x}^2 v(t,0)=\partial_{x}^2 v(t,a)=0 & \quad t\in (0,T), \\[0.3em]
		v_x(t,0)=0 & \quad  t\in (0,T),\\[0.3em]
		v(0,x)=v_0(x) & \quad x\in \Omega_x,
	\end{cases}
\end{equation}
is identically zero for any initial condition $v_0 \in L^2(\Omega_x)$. 
Note that in the above system, the zero observation condition has been incorporated 
by imposing $v_x(t,0)=0$.

	Let us first assume that $\nu\notin \mathcal{N}$ and fix any $j\in \N.$ The solution of equation \eqref{KS-oned aprx_cyl} can be written as
	\begin{equation*}
		v(t,x)=\sum_{k=1}^{\infty}v_{0,k}e^{\left(-\frac{k^4\pi^4}{a^4}+\left(\nu-2\mu^{\oy}_j\right) \frac{k^2\pi^2}{a^2}\right)t}\sqrt{2}\sin\left(\frac{k\pi x}{a}\right),
	\end{equation*}
	where $v_{0,k}=\sqrt{2}\int_{0}^{a} v_0(x)\sin\frac{k\pi x}{a}.$
	In turn, the observation term takes the form
	\begin{equation*}
		v_x(t,0)=\sum_{k=1}^{\infty}v_{0,k}e^{\left(-\frac{k^4\pi^4}{a^4}+\left(\nu-2\mu^{\oy}_j\right) \frac{k^2\pi^2}{a^2}\right)t}\sqrt{2}\left(\frac{k\pi}{a}\right)=0.
	\end{equation*}
	and, since $v_x(t,0)$ is analytic in $(0,T]$, we have the following extension
	\begin{equation}\label{vx_cyl}
		v_x(t,0)=\sum_{k=1}^{\infty}v_{0,k}e^{\left(-\frac{k^4\pi^4}{a^4}+\left(\nu-2\mu^{\oy}_j\right) \frac{k^2\pi^2}{a^2}\right)t}\sqrt{2}\left(\frac{k\pi}{a}\right)=0 \quad \forall t\in (0,\infty).
	\end{equation}
	Let $k_0\in \N$ be such that 
	$-\frac{k_0^4\pi^4}{a^4}+\left(\nu-2\mu^{\oy}_j\right) \frac{k_0^2\pi^2}{a^2}=\max\limits_{k\in \N}\bigg\{-\frac{k^4\pi^4}{a^4}+\left(\nu-2\mu^{\oy}_j\right) \frac{k^2\pi^2}{a^2} \bigg\}.$ As we assume $\nu\notin \mathcal{N}$, such $k_0$ is unique. Multiplying both sides of \eqref{vx_cyl} by $e^{\left(\frac{k_0^4\pi^4}{a^4}-\left(\nu-2\mu^{\oy}_j\right) \frac{k_0^2\pi^2}{a^2}\right)t}$ and letting $t\to \infty$, we obtain $v_{0,k_0}=0.$ Next, let $k_1\in \N$ be such that $-\frac{k_1^4\pi^4}{a^4}+\left(\nu-2\mu^{\oy}_j\right) \frac{k_1^2\pi^2}{a^2}=\max\limits_{k\in \N-\{k_0\}}\bigg\{-\frac{k^4\pi^4}{a^4}+\left(\nu-2\mu^{\oy}_j\right) \frac{k^2\pi^2}{a^2} \bigg\}$. Again, uniqueness of $k_1$ follows from the assumption $\nu\notin \mathcal{N}$. Multiplying both sides of \eqref{vx_cyl} by $e^{\left(\frac{k_1^4\pi^4}{a^4}-\left(\nu-2\mu^{\oy}_j\right) \frac{k_1^2\pi^2}{a^2}\right)t}$, we deduce that $v_{0,k_1}=0.$ Thus, by induction, we can prove that $v_{0,k}=0, \forall k\in \N.$ Therefore $v=0 \text{ in } (0,T)\times \Omega_x.$
	
	Conversely let $\nu \in \mathcal{N}.$ Then there exists some $k_0\neq l_0$ such that $\nu=2\mu^{\oy}_j+\pi^2\left(\frac{k_0^2+l_0^2}{a^2}\right).$ Therefore, we have
	\begin{align*}
		-\frac{k_0^4\pi^4}{a^4}+\left(\nu-2\mu^{\oy}_j\right)\frac{k_0^2\pi^2}{a^2}=-\frac{l_0^4\pi^4}{a^4}+\left(\nu-2\mu^{\oy}_j\right) \frac{l_0^2\pi^2}{a^2}=\frac{k_0^2l_0^2\pi^4}{a^4}.
	\end{align*}
	Next, observe that $v(t,x)=e^{\left(-\frac{k_0^4\pi^4}{a^4}+\left(\nu-2\mu^{\oy}_j\right) \frac{k_0^2\pi^2}{a^2}\right)t}\sin\left(\frac{k_0\pi x}{a}\right)-e^{\left(-\frac{l_0^4\pi^4}{a^4}+\left(\nu-2\mu^{\oy}_j\right) \frac{l_0^2\pi^2}{a^2}\right)t}\frac{k_0}{l_0}\sin\left(\frac{l_0\pi x}{a}\right)$
	satisfies the equation \eqref{KS-oned aprx_cyl} without $v=0.$
\end{proof}	In the next section, we will prove that the condition $\nu \notin \mathcal{N}$ is sufficient for the null controllability of the system \eqref{KS-oned_cyl} at time $T>0$. Nevertheless, one can show that the condition $\nu \notin \mathcal{N}$ is necessary for null controllability of \eqref{KS-oned_cyl}.
	\begin{corollary}\label{necessary}
		Let us assume that the system \eqref{KS-oned_cyl} is null controllable in time $T>0$. Then $\nu\notin \mathcal{N}.$
	\end{corollary}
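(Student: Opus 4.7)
The plan is a short two-step argument: show that null controllability at time $T$ implies approximate controllability at the same time, and then invoke Theorem~\ref{aprx} to conclude $\nu\notin \mathcal N$. The first step is a standard HUM-style duality based on the identity \eqref{iden_transposition1}: null controllability of \eqref{KS-oned_cyl} is equivalent to the observability inequality
\[
\|\phi(0,\cdot)\|_{L^2(\Omega_x)}^2 \le C\int_0^T |\phi_x(t,0)|^2\,dt
\]
for every solution $\phi$ of the adjoint \eqref{adj_cyl} (cf.\ \cite[Chapter 2]{Cor07}). Given any $\phi_T\in L^2(\Omega_x)$ with $\phi_x(\cdot,0)\equiv 0$ on $(0,T)$, this inequality forces $\phi(0,\cdot)=0$; reading \eqref{adj_cyl} in the reversed time $s=T-t$ as a well-posed forward parabolic Cauchy problem and using uniqueness then yields $\phi\equiv 0$, hence $\phi_T=0$. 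This is precisely the dual characterization of approximate controllability at time $T$, and Theorem~\ref{aprx} concludes.

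For a reader who prefers a direct contradiction, one can bypass the duality passage. Assume $\nu\in\mathcal N$ and reuse the explicit counterexample built in the proof of Theorem~\ref{aprx}. Because the spatial operator $\partial_x^4+(\nu-2\mu_j^{\Omega_y})\partial_x^2$ is self-adjoint under the Dirichlet/Navier boundary conditions of \eqref{KS-oned_cyl}, the time-reversed function $\phi(t,x):=v(T-t,x)$ constructed there is a nontrivial solution of the adjoint \eqref{adj_cyl} that still satisfies $\phi_x(\cdot,0)\equiv 0$ on $(0,T)$. Plugging this $\phi$ into the observability inequality gives $\|\phi(0,\cdot)\|_{L^2(\Omega_x)}=0$ while $\phi\not\equiv 0$, which is a contradiction.

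I do not anticipate any genuine obstacle: the control/observability duality and the forward uniqueness of the (time-reversed) adjoint parabolic equation are classical, and the relevant spectral counterexample modes have already been produced in the proof of Theorem~\ref{aprx}. The only minor care needed is to respect the sign convention in the duality pairing \eqref{iden_transposition1} and to verify the boundary integration by parts that underlies it; this is mechanical once one notices that all relevant boundary traces of $v$ and $\phi$ at $x=0,a$ vanish except for $\partial_x^2 v(t,0)=q(t)$ and the corresponding trace $\phi_x(t,0)$.
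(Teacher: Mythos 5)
Your proposal is correct in substance, but it reaches the conclusion by a genuinely different route than the paper. The paper works on the primal side: assuming null controllability, it steers $v_0-\sqrt{2}\sin(k\pi x/a)$ to zero while the eigenfunction part evolves freely, so every state $\sqrt{2}e^{\lambda_k^{\Omega_x}T}\sin(k\pi x/a)$ lies in the reachable set $\mathcal R(T,v_0)$; density of the eigenfunctions then gives approximate controllability, and \Cref{aprx} yields $\nu\notin\mathcal N$. You instead pass to the dual side: null controllability is equivalent (by the transposition identity \eqref{iden_transposition1} and standard duality, which the paper itself uses in \eqref{obs1_cyl} and \Cref{dual}) to the observability inequality $\|\phi(0,\cdot)\|_{L^2(\Omega_x)}^2\le C\int_0^T|\phi_x(t,0)|^2\,dt$, from which you deduce the unique continuation property characterizing approximate controllability, or, in your second route, you contradict observability directly by time-reversing the explicit two-mode counterexample from the proof of \Cref{aprx}. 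Your second route is clean and even bypasses the statement of \Cref{aprx} (it only recycles its counterexample); the paper's construction has the advantage of staying entirely on the control side and avoiding any uniqueness discussion for the adjoint flow.

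One point to fix in your first route: from $\phi(0,\cdot)=0$ you cannot conclude $\phi\equiv0$ by ``uniqueness of the well-posed forward Cauchy problem'' in the reversed time variable; knowing that the forward solution vanishes at the \emph{final} time is a \emph{backward} uniqueness statement, not forward uniqueness. Here it is true and immediate, since $\phi(t)=\sum_k e^{\lambda_k^{\Omega_x}(T-t)}\langle\phi_T,\Psi_k^{\Omega_x}\rangle\,\Psi_k^{\Omega_x}$, so $\phi(0,\cdot)=0$ forces every coefficient $\langle\phi_T,\Psi_k^{\Omega_x}\rangle$ to vanish (injectivity of the self-adjoint semigroup), hence $\phi_T=0$; but the justification should be stated this way. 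Your second route does not rely on this step and is unaffected.
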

\begin{proof}
	Let us assume that the system \eqref{KS-oned_cyl} is null controllable in time $T>0.$ Therefore for any $v_0\in L^2(\ox)$, there always exists control $q\in L^2(0,T)$ such that the following system 
	\begin{equation*}
			\begin{cases}
			\partial_t w+\partial_{x}^4 w+\left(\nu-2\mu^{\oy}_j\right)\partial_{x}^2 w=0 & \quad  t\in (0,T), \;\; x\in \Omega_x,\\
			w(t,0)=w(t,a)=0, \quad \partial_{x}^2 w(t,0)=q(t), \;\; \partial_{x}^2 w(t,a)=0& \quad t\in (0,T), \\
			w(0,x)=v_0(x)-\sqrt{2}\sin(\frac{k\pi x}{a}) & \quad x\in \Omega_x,
		\end{cases}
	\end{equation*}
satisfies $w(T,x)=0$, $\forall k\in \N.$
Next, it is easy to observe that the solution of the following system 
\begin{equation*}
	\begin{cases}
		\partial_t y+\partial_{x}^4 y+\left(\nu-2\mu^{\oy}_j\right)\partial_{x}^2 y=0 & \quad  t\in (0,T), \;\; x\in \Omega_x,\\
		y(t,0)=y(t,a)=0, \quad \partial_{x}^2 y(t,0)=\partial_{x}^2 y(t,a)=0 & \quad t\in (0,T), \\
		y(0,x)=\sqrt{2}\sin(\frac{k\pi x}{a}) & \quad x\in \Omega_x,
	\end{cases}
\end{equation*}
satisfies $y(T,x)=\sqrt{2}e^{\lambda_k^{\ox} T}\sin(\frac{k\pi x}{a}),$ as $\sin(\frac{k\pi x}{a})$  is an eigenfunction of the operator $A$ with respect to the eigenvalue $\lambda_k^{\ox}.$ Thus, it follows that $v=y+w$ solves equation \eqref{KS-oned_cyl} with $v(T,x)=\sqrt{2}e^{\lambda_k^{\ox} T}\sin(\frac{k\pi x}{a})$. We also observe that $\big\{\sqrt{2}e^{\lambda_k^{\ox} T}\sin(\frac{k\pi x}{a})\big\}_{k\in \N}\subset \mathcal{R}(T,v_0),$ where the reachable space $\mathcal{R}$ is defined by
\begin{align*}
	\mathcal{R}(T,v_0)=\big\{v(T), v \text{ is the solution of } \eqref{KS-oned_cyl} \text{ with } q\in L^2(0,T)  \big\}.
\end{align*}
As the family $\big\{\sqrt{2}\sin(\frac{k\pi x}{a})\big\}_{k\in \N}$ of eigenfunction is dense in $L^2(\ox)$, we have $\mathcal{R}(T,v_0)$ is dense in $L^2(\ox)$ for any $v_0\in L^2(\ox).$ This implies that \eqref{KS-oned_cyl} is approximately controllable in time $T>0.$ It is immediate from \Cref{aprx} that $\nu \notin \mathcal{N}.$ 
\end{proof}
\subsection{Null controllability}
%\subsection{Biorthogonal result}
In the framework of parabolic control theory, the existence of bi-orthogonal families to the family of exponential functions in $L^2(0, T)$ has been extensively studied, from the pioneer work \cite{Fattorini-Russell-1} up to the very recent developments. In this paper, we use \cite[Theorem V.6.43]{Boy23} (which is similar to \cite[Theorem 1.5]{AB2014} but with a more general set of assumptions) to establish the following result.

\begin{theorem}\label{biorthogonal}
	Let $\Lambda$ be a collection of positive real numbers satisfying the following conditions:
	\begin{itemize}
%		\item there exists $\eta>0$ such that $\Lambda$ satisfies the following sector condition with $\eta>0$
%		\begin{equation}
%			\Lambda\subset \mathscr{S}_{\eta}=\{z\in \cplx; \Re(z)>0, |\Im( z)|<\sinh(\eta)(\Re(z))\}
%		\end{equation}
		\item there exist $\theta\in (0,1)$ and $\kappa_1>0$ such that the following asymptotic property holds
		\begin{equation}\label{counting}
			%\begin{cases}
				\hspace{2cm}N(r)\leq \kappa_1 r^{\theta},
			%	|N_{\lambda}(r)-N_{\lambda}(s)|\leq \kappa_2j^{\frac{1}{(N-1)}} (1+|r-s|^{\theta}), \forall r,s>0
		%	\end{cases}
		\end{equation}
		
		where ${N}$ is the counting function associated with the sequence $\Lambda $ defined by 
		\begin{equation}\label{counting fn}
			{N}(r)= \# \ens{ \lambda\in \Lambda :\, \vert \lambda\vert \leq r}, \quad \forall r>0.
		\end{equation}
		\item $\Lambda$ satisfies the following gap condition: there exists $\rho>0$ such that:
		\begin{equation*}
			|\lambda-\mu|\geq \rho, \quad \forall \lambda\neq \mu \in \Lambda.
		\end{equation*}
	\end{itemize}
	Then for any $T>0$ there exists a family $\{q_{\lambda,T}\}_{\lambda\in \Lambda}$ in $L^2(0,T)$ satisfying
	\begin{equation*}
		\int_{0}^{T}e^{-\mu t} q_{\lambda,T}(t) dt =\delta_{\lambda,\mu}, \quad \forall \lambda,\mu\in \Lambda
	\end{equation*} 
	with the following estimate
	\begin{equation*}
		\norm{q_{\lambda,T}}_{L^2(0,T)}\leq Ke^{K \lambda^{\theta}+KT^{-\frac{\theta}{1-\theta}}} \quad \forall \lambda\in \Lambda,
	\end{equation*}
	where $K(\theta, \rho, \kappa_1)>0$ does not depend on $T.$
\end{theorem}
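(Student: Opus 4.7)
The plan is to follow the classical complex-analytic approach initiated by Fattorini--Russell and developed in \cite{AB2014, Boy23}. The core idea is to use the Paley--Wiener theorem to reduce the construction of the biorthogonal family to the construction of a suitable entire function. Specifically, after a time shift $t \mapsto t - T/2$, an element $q_{\lambda,T}\in L^2(0,T)$ can be identified with an entire function of exponential type $T/2$ whose restriction to the real line is square-integrable. Under this correspondence, the biorthogonality condition $\int_0^T e^{-\mu t}q_{\lambda,T}(t)dt=\delta_{\lambda,\mu}$ translates into prescribing the values of a Laplace transform $F_\lambda$ on the set $\Lambda$: namely, $F_\lambda(\mu)=0$ for $\mu\in \Lambda\setminus\{\lambda\}$ and $F_\lambda(\lambda)=e^{-\lambda T/2}$ (up to a normalization constant). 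Hence the whole problem reduces to constructing an entire function $F_\lambda$ with the required zero set, prescribed growth, and controlled behavior on the imaginary axis.

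The construction of $F_\lambda$ proceeds in two stages. First, I would introduce the canonical product
\begin{equation*}
P_\lambda(z)=\prod_{\mu\in \Lambda,\ \mu\neq \lambda}\left(1-\frac{z^2}{\mu^2}\right),
\end{equation*}
whose convergence follows from $\sum_{\mu\in \Lambda}\mu^{-s}<\infty$ for every $s>\theta$, which is in turn a consequence of the counting estimate \eqref{counting}. A standard application of Borel's theorem (or a direct Jensen-type argument using \eqref{counting}) then yields an upper bound of the form $\log|P_\lambda(z)|\leq C|z|^\theta$ for $|z|$ large, so that $P_\lambda$ has order at most $\theta<1$, and in particular is of exponential type zero. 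The gap condition provides a matching lower bound $|P_\lambda(\lambda)|\geq c\,e^{-C\lambda^\theta}$, since the factor with index closest to $\lambda$ is bounded below by $\rho/\lambda$ and the remaining factors are controlled by the same Jensen-type computation.

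Second, to promote $P_\lambda$ to a function of exponential type $T/2$ whose restriction to $\mathbb R$ is in $L^2$, I would multiply by an Ingham-type multiplier $M_T$ built as an infinite product of normalized sine factors,
\begin{equation*}
M_T(z)=\prod_{k\geq 1}\frac{\sin(a_k z)}{a_k z},
\end{equation*}
with a geometrically decreasing sequence $\{a_k\}$ satisfying $\sum a_k=T/2$. Such multipliers have exponential type $T/2$ by construction and decay on the real line at a rate $e^{-c(T|x|)^{1/(1-\theta)}/T^{\theta/(1-\theta)}}$ when $\{a_k\}$ is tuned to balance the growth $e^{C|z|^\theta}$ of $P_\lambda$. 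Setting $F_\lambda=P_\lambda M_T$ and taking the inverse Fourier transform of $F_\lambda/F_\lambda(\lambda)$ on the imaginary axis produces $q_{\lambda,T}$. Combining the lower bound on $|P_\lambda(\lambda)|$ with the growth of the multiplier on the real line and the Paley--Wiener $L^2$ estimate yields the claimed norm bound $\|q_{\lambda,T}\|_{L^2(0,T)}\leq K e^{K\lambda^\theta+KT^{-\theta/(1-\theta)}}$.

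The main obstacle is the careful quantitative analysis required in the multiplier step: one must choose the sequence $\{a_k\}$ so as to simultaneously guarantee that $M_T$ has exponential type exactly $T/2$, that $P_\lambda M_T$ is square-integrable on $\mathbb R$, and that the resulting $L^2$-norm produces the sharp $T^{-\theta/(1-\theta)}$ dependence rather than a worse power of $T$. This is precisely the optimization carried out in \cite[Theorem V.6.43]{Boy23}, and I would invoke that reference to extract the final estimate once the canonical product $P_\lambda$ has been analyzed under the hypotheses \eqref{counting} and the gap condition.
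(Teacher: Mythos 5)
Your proposal is correct and follows essentially the same route as the paper: the paper proves this theorem simply by invoking \cite[Theorem V.6.43]{Boy23}, and your sketch (Weierstrass product with growth bounded via the counting function, lower bound at $\lambda$ from the gap condition, a sinc-product multiplier of type $T/2$ tuned to give the $T^{-\theta/(1-\theta)}$ dependence, and Paley--Wiener) is precisely the construction underlying that cited result, which you also invoke for the final quantitative optimization.
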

\begin{proof}
The proof can be found in Theorem V.6.43, \cite{Boy23}.
\end{proof}

\begin{lemma}\label{verification}
Consider $\nu \notin \mathcal N$.
Then the collection of the eigenvalues is $\Lambda=\{-\lambda_k^{\ox}, k\in \N\}$ given by \eqref{eigenv} verifies the condition of \Cref{biorthogonal}.
\end{lemma}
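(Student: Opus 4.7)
The plan is to verify the two hypotheses of \Cref{biorthogonal} for $\Lambda=\{-\lambda_k^{\ox}\}_{k\in\N}$, where from \eqref{eigenv}
\begin{equation*}
-\lambda_k^{\ox}=\frac{k^4\pi^4}{a^4}-(\nu-2\mu_j^{\oy})\frac{k^2\pi^2}{a^2}.
\end{equation*}
Since the quartic term dominates, all but finitely many elements of $\Lambda$ are positive, and the finitely many exceptions can be handled by a harmless shift $\lambda\mapsto\lambda+C$ that multiplies the biorthogonal family by $e^{-Ct}$ but affects neither the counting nor the gap condition; I will not track this adjustment further.

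For the counting function \eqref{counting fn}, the plan is to exploit the bound $|{-\lambda_k^{\ox}}|\geq \tfrac12 k^4\pi^4/a^4$ valid once $k$ exceeds a threshold depending only on $a$, $\nu$ and $\mu_j^{\oy}$. Then $|{-\lambda_k^{\ox}}|\leq r$ forces $k\leq C r^{1/4}$ for large $r$, while for bounded $r$ the number of contributing indices is uniformly controlled. Combining these two regimes yields $N(r)\leq\kappa_1 r^{1/4}$ for every $r>0$, establishing \eqref{counting} with $\theta=1/4$.

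The core of the argument is the gap condition, for which the key identity is the factorization
\begin{equation*}
(-\lambda_k^{\ox})-(-\lambda_l^{\ox})=\frac{\pi^2(k^2-l^2)}{a^2}\left[\frac{\pi^2(k^2+l^2)}{a^2}+2\mu_j^{\oy}-\nu\right].
\end{equation*}
For $k\neq l$ with $k,l\geq 1$ one has $|k^2-l^2|\geq 3$. The bracket vanishes precisely when $\nu=2\mu_j^{\oy}+\pi^2(k^2+l^2)/a^2$, i.e., when $\nu\in\mathcal{N}$, which is excluded by hypothesis. I then plan to observe that the set
\begin{equation*}
\mathcal{E}=\left\{2\mu_j^{\oy}+\frac{\pi^2(k^2+l^2)}{a^2}:\ k,l\in\N,\ k\neq l\right\}
\end{equation*}
is a discrete subset of $\mathbb{R}$ accumulating only at $+\infty$, so that $\rho_0:=\mathrm{dist}(\nu,\mathcal{E})$ is strictly positive. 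This delivers the uniform gap $|(-\lambda_k^{\ox})-(-\lambda_l^{\ox})|\geq 3\pi^2\rho_0/a^2$ for every $k\neq l$.

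The main obstacle is precisely this uniform positivity of the bracket: for large $k+l$ the bracket itself grows unboundedly and positivity is automatic, but for small $k+l$ one is reduced to checking finitely many values, each of which is nonzero thanks to the critical-set exclusion $\nu\notin\mathcal{N}$. Once the gap is secured, combining it with the counting-function estimate completes the verification of the hypotheses of \Cref{biorthogonal}.
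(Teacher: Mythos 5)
Your proof is correct and follows essentially the same route as the paper: positivity after a harmless constant shift, the gap condition from the factorization of $\lambda_k^{\ox}-\lambda_l^{\ox}$ combined with the exclusion $\nu\notin\mathcal N$, and the counting bound $N(r)\leq \kappa_1 r^{1/4}$ (i.e.\ $\theta=1/4$). The only real difference is how the gap is quantified: the paper splits into the cases $2\mu^{\oy}_j-\nu>0$ and $2\mu^{\oy}_j-\nu\leq 0$ and asserts a gap constant $\rho$ \emph{independent of $j$}, whereas you bound the bracket below by $\rho_0=\mathrm{dist}(\nu,\mathcal E)$ with $\mathcal E$ the fixed-$j$ critical set, which gives the clean explicit gap $3\pi^2\rho_0/a^2$. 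This fully proves the lemma as stated, but be aware that your $\rho_0$ is a priori $j$-dependent, while the downstream use of \Cref{biorthogonal} (the constant in \eqref{cost_cyl} must not depend on $j$) requires uniformity; your argument delivers it with one extra line, since for $2\mu^{\oy}_j-\nu>0$ the bracket is automatically at least $\pi^2(k^2+l^2)/a^2\geq 5\pi^2/a^2$, and only finitely many $j$ with $2\mu^{\oy}_j-\nu\leq 0$ remain, so $\inf_j \rho_0(j)>0$. Similarly, for the counting function at small $r$ you should note, as the paper does, that $N(r)=0$ below the smallest (shifted) eigenvalue, after which the bound $N(r)\leq \kappa_1 r^{1/4}$ indeed holds for every $r>0$; with these two remarks your verification matches the paper's.
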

\begin{proof}
Let us denote	\begin{equation}\label{n not_cyl}
		n_0=\min\bigg\{j \in \N: \left(2\mu^{\oy}_j-\nu\right)>0\bigg\}.
	\end{equation}  
We verify the conditions of \Cref{biorthogonal} dividing two cases $j\geq n_0$ and $1\leq j<n_0$ separately.

\textit{Case 1.} $j \geq n_0.$
\begin{itemize}
	\item {\em Positivity condition.}
	Obvious by assumption on $j.$

	\item {\em The gap condition.} We already assume $\nu\notin \mathcal{N}.$ Hence it is clear that $\lambda_k^{\ox}\neq \lambda_m^{\ox}$, $\forall \, k, m \in \N$, $k\neq m.$ Moreover, let us assume that $k>m.$ Then we estimate the gap between the eigenvalues as follows:
\begin{align*}
	|\lambda_k^{\ox}- \lambda_m^{\ox}|=\frac{\pi^2}{a^2}(k^2-m^2)\left|\frac{\pi^2}{a^2}(k^2+m^2)+\left(2\mu^{\oy}_j-\nu\right)\right|\geq \rho,
\end{align*}
for some $\rho>0$ independent of $j.$

\item {\em The condition on the counting function.} Using the definition of the counting function we have for $r>0$:
\begin{equation*}
	N(r)=k \text{ if and only if } |-\lambda_k^{\ox}|\leq r, |-\lambda_{k+1}^{\ox}|>r.
\end{equation*}
Let $N(r)=k$ for some $r>0.$ Then $\big|\frac{k^4\pi^4}{a^4}+\left(2\mu^{\oy}_j-\nu\right) \frac{k^2\pi^2}{a^2}\big|<r$ and it readily follows that $k<\frac{a}{\pi}r^{1/4}.$ Thus we have $N(r)<\frac{a}{\pi}r^{1/4}.$ Hence, inequality $\eqref{counting}$ is verified.

\end{itemize}

\textit{Case 2.} $1\leq j<n_0.$ Note that in this case $\nu_j=\left(-\nu+2\mu^{\oy}_j\right)<0.$
\begin{itemize}
		\item {\em Positivity condition.}
		Without loss of generality, we assume that all the eigenvalues $-\lambda_k^{\ox}$ are positive. Indeed, we can choose some $c_0>0$ such that $-\lambda_k^{\ox}+c_0>0$ for all $-\lambda_k^{\ox} \in \Lambda.$ In
		what follows, an additional factor $e^{c_0T}$ will appear in the estimation of control cost, but
		without any consequences on our overall analysis. %Furthermore, this modification does not impact any result in control problem, for details \Cref{stable matrix}.
		\item {\em The gap condition.} Same as Case 1.
		\item {\em The condition on  counting function.}
		Let $N(r)=k$ for some $r>0.$ 
		Then using definition of counting function we have $\big|\frac{k^4\pi^4}{a^4}+\left(2\mu^{\oy}_j-\nu\right) \frac{k^2\pi^2}{a^2}\big|<r.$
		Let us denote $p=\frac{k^2\pi^2}{a^2}.$  So we have from the above $p^2+\nu_jp-r<0,$ where $\nu_j=\left(-\nu+2\mu^{\oy}_j\right)<0.$ We consider the function $f(x)=x^2+\nu_jx-r, x\geq 1.$ $f$ is convex and it has two real roots $\frac{1}{2}(-\nu_j\pm\sqrt{\nu_j^2+4r}).$ Therefore $f<0$ in the interval
		$\left(\frac{1}{2}(-\nu_j-\sqrt{\nu_j^2+4r}), \frac{1}{2}(-\nu_j+\sqrt{\nu_j^2+4r})\right)$. Which implies that $p<\frac{1}{2}(-\nu_j+\sqrt{\nu_j^2+4r})\leq -\nu_j+\sqrt{r} $. %It follows that $p< \sqrt{r}.$ Which gives that $k<\frac{a}{\pi}r^{1/4}.$ Thus we have $N(r)<\frac{a}{\pi}r^{1/4}.$ Hence $\eqref{counting}$ is verified. 
		By a straightforward computation and noting that the term ${-\nu_j}$ is bounded for $1\leq j<n_0,$ one can deduce that $N(r)<C+\frac{a}{\pi}r^{1/4},$ for some $C>0$, which gives the desired bound for large $r$. For small $r$, we can choose $\tilde r>0$ such that $N(r)=0$ for $r<\tilde r$, and by possibly increasing $C$, the same estimate $N(r)<Cr^{1/4}$ then holds for all $r\ge \tilde r$. Hence, the bound $N(r)<Cr^{1/4}$ is valid uniformly for all $r>0$.
\end{itemize}
\end{proof}

\subsubsection{\textbf{Reduction to the moment problem}}
To apply the method of moments for showing the null controllability of \eqref{KS-oned_cyl}, we use the adjoint system \eqref{adj_cyl} and derive an equivalent criterion for exact controllability. 
\begin{lemma}\label{lemma moment}
	The control system \eqref{KS-oned_cyl} is null controllable in time $T>0$ in the space $L^2(\Omega_x)$ if and only if for any $v_0\in L^2(\Omega_x)$, there exists a function $q\in L^2(0,T)$ such that for any $ \phi_T \in  L^2(\ox)$ the following identity holds
	\begin{align}\label{eq:equivalent m_cyl}
		\ip{v_0}{\phi(0,\cdot)}_{L^2(\Omega_x)}=\int_{0}^{T} q(t) \phi_x(t,0) dt,
	\end{align}
	where $ \phi$ is the solution of the adjoint system \eqref{adj_cyl}.
\end{lemma}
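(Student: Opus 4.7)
The plan is to derive the equivalence as a direct consequence of the definition of solution by transposition. Fix $v_0 \in L^2(\Omega_x)$ and $q \in L^2(0,T)$, and let $v$ be the associated solution of \eqref{KS-oned_cyl}. For any terminal datum $\phi_T \in L^2(\Omega_x)$, let $\phi$ solve the backward adjoint problem \eqref{adj_cyl}. Setting $\tau = T$ in the transposition identity \eqref{iden_transposition1} yields
\begin{equation*}
\langle v(T), \phi_T\rangle_{L^2(\Omega_x)} = \langle v_0, \phi(0,\cdot)\rangle_{L^2(\Omega_x)} - \int_0^T q(t)\,\phi_x(t,0)\,dt,
\end{equation*}
which is the central identity from which both implications are read off.

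For the forward direction, assume the system is null controllable. Then for every $v_0\in L^2(\Omega_x)$ one can select $q\in L^2(0,T)$ realizing $v(T)=0$. The left-hand side of the displayed identity vanishes for every $\phi_T$, producing exactly the moment identity \eqref{eq:equivalent m_cyl}. Conversely, suppose $q\in L^2(0,T)$ satisfies \eqref{eq:equivalent m_cyl} for every $\phi_T \in L^2(\Omega_x)$. Substituting into the transposition identity gives $\langle v(T), \phi_T\rangle_{L^2(\Omega_x)} = 0$ for all $\phi_T \in L^2(\Omega_x)$, and therefore $v(T) = 0$ in $L^2(\Omega_x)$.

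The only technical point that warrants attention is the well-definedness of the trace $\phi_x(\cdot,0)$ as an element of $L^2(0,T)$, so that the right-hand side of \eqref{eq:equivalent m_cyl} makes sense. This follows from \Cref{lem:reg_backward1}, which provides $\phi \in L^2(0,T;\mathcal H(\Omega_x))$ with continuous dependence on $\phi_T$; combined with the one-dimensional Sobolev embedding $H^2(\Omega_x) \hookrightarrow C^1(\overline{\Omega_x})$, this yields $\phi_x(\cdot,0) \in L^2(0,T)$ together with the estimate $\|\phi_x(\cdot,0)\|_{L^2(0,T)} \le C\|\phi_T\|_{L^2(\Omega_x)}$. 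Since no estimate deeper than the already-established well-posedness is needed, this lemma is a structural reformulation rather than a substantive result; the actual work in proving \Cref{null control 1d_cyl} will consist in constructing a control $q$ that solves the moment problem generated by \eqref{eq:equivalent m_cyl} through the biorthogonal family supplied by \Cref{biorthogonal}.
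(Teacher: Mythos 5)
Your proof is correct and follows essentially the same route as the paper: both rest on the duality identity $\langle v(T),\phi_T\rangle_{L^2(\Omega_x)}=\langle v_0,\phi(0,\cdot)\rangle_{L^2(\Omega_x)}-\int_0^T q(t)\phi_x(t,0)\,dt$, from which the two implications are read off immediately. The only cosmetic difference is that you invoke the transposition identity \eqref{iden_transposition1} at $\tau=T$ directly (together with the trace remark justified by \Cref{lem:reg_backward1}), whereas the paper rederives the same identity by integrating by parts for smooth data and then passing to $\phi_T\in L^2(\Omega_x)$, $q\in L^2(0,T)$ by density.
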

\begin{proof} At first we consider the terminal data $\phi_T$ and the boundary control $q$ are smooth enough. 
	Taking the inner product of \eqref{KS-oned_cyl} with $\phi$  in $L^2(\Omega_x)$, where $\phi$ is the solution of the adjoint equation \eqref{adj_cyl}, we have
	\begin{align}\label{equivalent m_cyl}
		-\ip{v(T,\cdot)}{\phi_T}_{L^2(\Omega_x)}+	\ip{v_0}{\phi(0,\cdot)}_{L^2(\Omega_x)}=\int_{0}^{T} q(t) \phi_x(t,0) dt,
	\end{align}	
	By density argument we can prove the identity taking $ \phi_T \in  L^2(\ox)$ and  $q \in L^2(0,T)$.
	
	If \eqref{equivalent m_cyl} holds, then $\ip{v(T)}{\phi_T}=0, \forall \phi_T \in L^2(\ox)$. Hence $v(T)=0.$ Conversely, if there exists a control $q$ such that the system \eqref{KS-oned_cyl} is exactly controllable, then from \eqref{equivalent m_cyl} one can conclude \eqref{eq:equivalent m_cyl}.
\end{proof}
\subsubsection{\textbf{Proof of \Cref{null control 1d_cyl}}}
Our next task is to convert the above identity into a sequential problem by using the orthonormal eigenbasis $\{\Psi^{\ox}_k\}_{k\in \N}$. Let us consider $\phi_T=\Psi^{\ox}_k.$ As $\lambda^{\ox}_k$ is the eigenvalue of the operator ${A}^*$ (as $A$ is self-adjoint operator), the solution of the adjoint problem \eqref{adj_cyl} becomes
\begin{equation}\label{eq:sigma m_cyl}
	\phi(t,x)= e^{\lambda^{\ox}_k(T-t)}\Psi^{\ox}_k(x).
\end{equation}
Plugging \eqref{eq:sigma m_cyl} in \eqref{eq:equivalent m_cyl}, we have the following identity equivalent to \eqref{eq:equivalent m_cyl}
\begin{align}
	\nonumber	e^{\lambda^{\ox}_k T}\ip{u_0}{\Psi^{\ox}_k}_{L^2(\Omega_x)}=\sqrt{2}\frac{k\pi}{a}&\int_{0}^{T}q(t) e^{\lambda^{\ox}_k(T-t)}dt.
	%=&e^{\overline{\lambda}_k T}\int_{0}^{T}p(t) e^{-\overline{\lambda}_k t}dt, \, \forall \, k \, \in \mathbb{Z}.
\end{align}
Using a change of variable $t\mapsto T-t,$ and denoting $h(t)=q(T-t),$ we finally have
\begin{align}\label{moment1}
	e^{\lambda^{\ox}_k T}\ip{u_0}{\Psi^{\ox}_k}_{L^2(\Omega_x)}=\sqrt{2}\frac{k\pi}{a}&\int_{0}^{T}h(t) e^{\lambda^{\ox}_kt}dt.
	%=&e^{\overline{\lambda}_k T}\int_{0}^{T}p(t) e^{-\overline{\lambda}_k t}dt, \, \forall \, k \, \in \mathbb{Z}.
\end{align}
Thanks to \Cref{biorthogonal}, there exists a family $\{q_{m,T}\}_{m\in \N}$ in $L^2(0,T)$ such that the following holds
\begin{equation}\label{biorth}
	\int_{0}^{T}e^{\lambda^{\ox}_k t} q_{m,T}(t) dt =\delta_{k,m}, \quad \forall k,m\in \N,
\end{equation} 
with the estimate
\begin{align}\label{conest1}
	&	\norm{q_{k,T}}_{L^2(0,T)}\leq Ke^{K(-\lambda^{\ox}_k)^{1/4}+KT^{-\frac{1/4}{1-1/4}}}, \quad \forall k \in \N, \;\; j \geq n_0 ,\\
	&\norm{q_{k,T}}_{L^2(0,T)}\leq K_1 e^{K_1(-\lambda^{\ox}_k+c_0)^{1/4} +K_1T^{-\frac{1/4}{1-1/4}}}, \quad \forall k \in \N, \;\; \forall j < n_0,
\end{align}
for some positive constants $K, \, K_1$ independent of $T.$
From the expression of $\lambda^{\ox}_k$ it readily follows that when $j\geq n_0,$ $(-\lambda^{\ox}_k)^{1/4}\leq Ck+C_1\left(\mu^{\oy}_j\right)^{1/4}\sqrt{k}$, for some $C, C_1>0.$ Thanks to Weyl's law we have $\sqrt{\mu_{j}^{\Omega_y}} \sim_{j\to +\infty} C j^{\frac{1}{N-1}}.$ Therefore we can write
$(-\lambda^{\ox}_k)^{1/4}\leq Ck+C_1\left(j^{\frac{1}{N-1}}\right)^{1/2}\sqrt{k}$, for some $C, C_1>0.$
Also when $1<j<n_0,$ it is easy to check that $(-\lambda^{\ox}_k+c_0)^{1/4}\leq C_2 k,$ for some $C_2>0.$

Let us define the control function $h$ in the following way:
\begin{equation}\label{exp con}
	h(t)=\frac{a}{\sqrt{2}k\pi}\sum_{k=1}^{\infty}	e^{\lambda^{\ox}_k T}\ip{u_0}{\Psi^{\ox}_k}_{L^2(\Omega_x)}q_{k,T}(t),
\end{equation}
%\begin{equation}
%	\norm{q_{k,T}}_{L^2(0,T)}\leq Ke^{K(\Re(-\lambda^{\ox}_k))^{1/4}+KT^{-\frac{1/4}{1-1/4}}}, \forall k \in \N ,
%\end{equation}
where we need to take $ \theta =\frac{1}{4},$ defined in \Cref{biorthogonal}. % and $K>0$ does not depend on $T.$ 
%\begin{remark}
%	Here  $\tl p$ is the least natural number such that $-\lambda^{\ox}_k>0.$
%\end{remark}
Clearly this control $h$ satisfies the moment problem \eqref{moment1}. Now we need to show that $h\in L^2(0,T).$ 
Note that for any $j\in \N,$ there always exists $p_0\in \N$ such that $\lambda^{\ox}_k\leq -Ck^4,$ for all $k>p_0$ and for some $C>0.$ Therefore we estimate the control as follows
:\begin{align*}
	\norm{h}_{L^2(0,T)} \leq Ce^{\frac{C}{T^{1/3}}}  \left(e^{CT}  +\sum_{k>p_0}^{\infty}e^{C k}
	e^{C_1 j^{\frac{1}{2(N-1)}}\sqrt{k}} e^{-C k^4 T}\right)\norm{u_0}_{L^2(\Omega_x)}.
\end{align*}

%\begin{align*}
%	\norm{h}_{L^2(0,T)} \leq Ce^{\frac{C}{T^{1/3}}}  \sum_{k=1 }^{\infty}e^{C_1k} e^{-C_2k^4 T}
%\end{align*}
%For $k\leq \tl p$, the expression $-\lambda^{\ox}_k$ is negative and thus we estimate the finite number of term $e^{K(\Re(-\lambda^{\ox}_k))^{1/4}} e^{\lambda^{\ox}_k T}$ separately.
Using Young's inequality we have,
\begin{align*}
	C k\leq \frac{C^2}{T}+C_2k^{2}T\text{ and }C_1 j^{\frac{1}{2(N-1)}}\sqrt{k}\leq \frac{C^2 j^{\frac{1}{(N-1)}}}{T}+C_2kT.
\end{align*}
We further compute
\begin{align*}
	\norm{h}_{L^2(0,T)} &\leq Ce^{\frac{C}{T^{1/3}}}\left(e^{CT}+e^{\frac{C^2}{T}}e^{\frac{C^2 j^{\frac{1}{(N-1)}}}{T}} \sum_{k=1}^{\infty} e^{C_3(-k^4+k^{2}+k) T}\right)\norm{u_0}_{L^2(\Omega_x)}\\
	&\leq   
	Ce^{\frac{C}{T^{1/3}}}e^{\frac{C^2 j^{\frac{1}{(N-1)}}}{T}}\left(e^{CT}+e^{\frac{C^2}{T}}\left( 1+\sum_{k=2}^{\infty} e^{-C_4 k^2 T}\right)\right)\norm{u_0}_{L^2(\Omega_x)}\\
	&\leq Ce^{\frac{C}{T^{1/3}}}e^{\frac{C^2 j^{\frac{1}{(N-1)}}}{T}}\left(e^{CT}+e^{\frac{C^2}{T}}\left( 1+C\sqrt{\frac{1}{T}}\right)\right)\norm{u_0}_{L^2(\Omega_x)}\\
	&\leq C e^{\frac{C}{T^{1/3}}}e^{\frac{C j^{\frac{1}{(N-1)}}}{T}}\left(e^{CT}+e^{\frac{C}{T}}\right) \norm{u_0}_{L^2(\Omega_x)}.
	%&
	%	\leq Ce^{\frac{C}{T}(1+T^{2/3}+T^2)}\norm{u_0}_{L^2(\Omega_x)}
\end{align*}
Without loss of generality we can consider $T<1$ and then we have the desired control cost estimate \begin{equation*}\norm{h}_{L^2(0,T)} \leq Ce^{\frac{C j^{\frac{1}{(N-1)}}}{T}}\norm{u_0}_{L^2(\Omega_x)}.\end{equation*} 
The case $T\geq 1$ case reduced to the previous one. Indeed any continuation by zero of a control on $(0,\frac{1}{2})$ is a control on $(0,T)$ and the estimate follows from the decrease of the cost with respect to time. This ends the proof of \Cref{null control 1d_cyl}. \qed

\begin{remark}\label{stable matrix}
	Let us consider the following perturbed system of \eqref{KS-oned_cyl}
	\begin{equation}\label{KS-oned 1_cyl}
		\begin{cases}
			\partial_t \tl v+\partial_{x}^4 \tl v+\left(\nu-2\mu^{\oy}_j\right)\partial_{x}^2 \tl v+\left((\mu^{\oy}_j)^2-\nu\mu^{\oy}_j\right)\tilde v=0 & \quad  t\in (0,T), \;\; x\in \Omega_x,\\
			\tl	v(t,0)= \tl v(t,1)=0, \quad  \partial_{x}^2 \tl v(t,0)=\tl q(t), \;\; \partial_{x}^2 \tl v(t,1)=0 &\quad t\in (0,T), \\
			\tl v(0,x)=v_0(x) & \quad x\in \Omega_x.
		\end{cases}
	\end{equation}
	The null controllability result for \eqref{KS-oned_cyl}, namely \Cref{null control 1d_cyl}, implies the controllability for system \eqref{KS-oned 1_cyl}. Indeed, let us consider the change of variable $\tl v=ve^{-\left((\mu^{\oy}_j)^2-\nu\mu^{\oy}_j\right)t}.$ Then $\tl q(t)=e^{-\left((\mu^{\oy}_j)^2-\nu\mu^{\oy}_j\right)t} q(t)$ will be the new control for \eqref{KS-oned 1_cyl}.  Also note that $\forall j\geq n_0,$ ($n_0$ is defined in \eqref{n not_cyl}) $\left((\mu^{\oy}_j)^2-\nu\mu^{\oy}_j\right)$ is positive so we can use the bound $e^{-\left((\mu^{\oy}_j)^2-\nu\mu^{\oy}_j\right)t}<1$ and for all $j<n_0,$ we only get $e^{c_0T}$ in the control cost which can be treated by taking $T<T_0$ for some $T_0>0$. Thus we will get the required control cost as well for the perturbed system \eqref{KS-oned 1_cyl}.
\end{remark}

\section{Boundary controllability of KS equation on  cylindrical domains}\label{sec:cylinder}
In this section, we aim to establish our main multi-dimensional result, namely \Cref{Ln_KSE}. As outlined in \Cref{st}, the proof relies on using the geometric structure of the problem along with the one-dimensional controllability result presented in \Cref{null control 1d_cyl}. To achieve this, we adopt the Lebeau–Robbiano strategy, following the approach developed in \cite{AB2014}. We denote $(\lambda^{\oy}_k, \Psi^{\oy}_k) $ as the eigen-element of the operator 
\begin{equation*}
	\begin{cases}
		\mathcal{D}( \tl A) = \{u \in H^4(\Omega_y) : u = \Delta u = 0 \text{ on } \partial \Omega_y\},\\
		\tl Au =-\Delta^2 u-\nu \Delta u, \quad  \forall u \in \mathcal D(\tl A). \end{cases}
\end{equation*}
Therefore it is clear that $\lambda^{\oy}_k=-(\mu^{\oy}_k)^2+\nu\mu^{\oy}_k,$ where $(\mu^{\oy}_k,\Psi^{\oy}_k)$ is the eigen-elements of the Dirichlet Laplacian. %\Psi^{\oy}_k=\sqrt{2}\sin\left(\frac{k\pi y}{b}\right), y\in \oy.$ Let us introduce the following closed subspace of $L^2(\Omega)$ (see \cite{boy14})
We introduce the subpspace
\begin{equation*}
	E_J=\bigg\{\sum\limits_{j=1}^{J}\ip{u}{\Psi^{\oy}_j}_{L^2(\oy)}\Psi^{\oy}_j| u\in L^2(\Omega)\bigg\}\subset L^2(\Omega), \quad J\geq1,
\end{equation*}
where the notation $\sum\limits_{j=1}^{J}\ip{u}{\Psi^{\oy}_j}_{L^2(\oy)}\Psi^{\oy}_j$ means the following
\begin{equation*}
	(x,y)\mapsto \sum\limits_{j=1}^{J}\ip{u(x,\cdot)}{\Psi^{\oy}_j}_{L^2(\oy)}\Psi^{\oy}_j(y).
\end{equation*}
We will use the following decomposition result taken from \cite[Lemma 2.1]{AB2014}.
\begin{lemma}\label{normu}
	Any function $u\in L^2(\Omega)$ has the following representation:
	\begin{equation*}
		u=\sum\limits_{j=1}^{\infty}\ip{u}{\Psi^{\oy}_j}_{L^2(\oy)}\Psi^{\oy}_j.
	\end{equation*}
\end{lemma}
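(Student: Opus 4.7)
The plan is to reduce this to the standard Hilbert basis expansion in $L^2(\Omega_y)$ applied slicewise in $x\in\Omega_x$, and then promote the slicewise $L^2(\Omega_y)$-convergence to convergence in $L^2(\Omega)$ via Parseval and the dominated convergence theorem. Said differently, the claim is just the tensor decomposition $L^2(\Omega)=L^2(\Omega_x)\otimes L^2(\Omega_y)$ written in terms of the specific orthonormal basis $\{\Psi^{\oy}_j\}_{j\ge 1}$ of $L^2(\Omega_y)$.

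First I would recall that, since $\Omega_y$ is a bounded smooth domain, the Dirichlet Laplacian on $\Omega_y$ is self-adjoint with compact resolvent, and hence its eigenfunctions $\{\Psi^{\oy}_j\}_{j\ge 1}$ form a complete orthonormal basis of $L^2(\Omega_y)$. Consequently, any $v\in L^2(\Omega_y)$ admits the expansion
\begin{equation*}
v=\sum_{j=1}^{\infty}\ip{v}{\Psi^{\oy}_j}_{L^2(\oy)}\Psi^{\oy}_j \quad \text{in } L^2(\Omega_y),
\end{equation*}
together with the Parseval identity $\|v\|_{L^2(\oy)}^2=\sum_{j\ge 1}|\ip{v}{\Psi^{\oy}_j}_{L^2(\oy)}|^2$.

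Next, given $u\in L^2(\Omega)$, Fubini's theorem implies that $u(x,\cdot)\in L^2(\Omega_y)$ for a.e.\ $x\in \Omega_x$ and that $\int_{\Omega_x}\|u(x,\cdot)\|^2_{L^2(\oy)}dx=\|u\|^2_{L^2(\Omega)}$. I would then define the measurable coefficients $c_j(x):=\ip{u(x,\cdot)}{\Psi^{\oy}_j}_{L^2(\oy)}$ and the partial sum $S_J(x,y):=\sum_{j=1}^J c_j(x)\,\Psi^{\oy}_j(y)$. For a.e.\ fixed $x$, the basis expansion above yields $r_J(x):=\|u(x,\cdot)-S_J(x,\cdot)\|^2_{L^2(\oy)}\to 0$ as $J\to\infty$, while Bessel's inequality gives the uniform bound $r_J(x)\le \|u(x,\cdot)\|^2_{L^2(\oy)}$, which is integrable on $\Omega_x$. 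The dominated convergence theorem then delivers $\|u-S_J\|^2_{L^2(\Omega)}=\int_{\Omega_x}r_J(x)\,dx\to 0$, which is exactly the stated identity.

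There is no genuine obstacle here; the proof is essentially bookkeeping. The only subtle point worth spelling out is the identification between $\sum_j c_j(x)\Psi^{\oy}_j(y)$ and a genuine element of $L^2(\Omega)$, and this is precisely what the dominated convergence step above justifies.
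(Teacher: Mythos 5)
Your proof is correct: the slicewise expansion in the orthonormal Dirichlet eigenbasis of $L^2(\Omega_y)$, combined with Fubini, Bessel's inequality, and dominated convergence, is exactly the standard argument for this tensor-product decomposition. The paper itself gives no proof but simply cites the result from \cite[Lemma 2.1]{AB2014}, whose proof proceeds along the same lines, so your write-up faithfully supplies the omitted details.
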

\subsection{Partial Observability}
Let us recall $\mu_k^{\Omega_y}$ as the eigenvalue of the Dirichlet Laplacian in the domain ${\Omega_y}.$ Using Lebeau-Robbiano spectral inequality \eqref{thm:spec_ineq_degen} we have, for any $K\in \N,$
\begin{equation}\label{lr_cyl}
	\sum_{k=1}^{K}|a_k|^2\leq C e^{C\sqrt{\mu_K^{\Omega_y}}}\int_{\omega}\left|\sum_{k=1}^{K}a_k \Psi^{\oy}_k(y)\right|^2 dy.
\end{equation}
We denote by $\Pi_{E_J}$ the orthogonal projection in $L^2(\Omega)$  onto $E_J$. A crucial aspect of applying the Lebeau–Robbiano strategy lies in estimating the cost associated with the following partial observability on the finite-dimensional approximation spaces. This estimate plays a central role during the active control phase.
\begin{proposition}\label{par obs_cyl}
	Recall that the system \eqref{KS-oned_cyl} is controllable with control cost $C_{T,j}^{\Omega_x}=Ce^{\frac{C j^{\frac{1}{(N-1)}}}{T}}.$ Let $\oy$ be of class $C^2.$ Then we have the following partial observability inequality 
	\begin{equation}
		\norm{\Pi_{E_J}\sigma(0)}_{L^2(\Omega)}^2\leq C (C_{T,j}^{\Omega_x})^2 e^{\sqrt{\mu_J^{\Omega_y}}}\int_{0}^{T}\int_{\omega}|\pa_x\sigma(t,0,y)|^2 dy,
	\end{equation}
	where $\sigma$ is the solution of the following adjoint system \eqref{lin_NKS_adj}.
\end{proposition}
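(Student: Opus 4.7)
The plan is to reduce the partial observability to a family of one-dimensional observabilities by expanding $\sigma$ along the transverse Dirichlet eigenbasis, and then to glue the scalar boundary traces back together by means of the Lebeau--Robbiano spectral inequality \eqref{lr_cyl}.

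First, I would write $\sigma(t,x,y) = \sum_{j\geq 1}\sigma_j(t,x)\Psi_j^{\Omega_y}(y)$ and substitute into \eqref{lin_NKS_adj}. Using $-\Delta_y\Psi_j^{\Omega_y} = \mu_j^{\Omega_y}\Psi_j^{\Omega_y}$ and the Navier-type boundary conditions, each Fourier coefficient $\sigma_j(t,x)$ satisfies the one-dimensional adjoint problem associated with the perturbed KS operator appearing in \Cref{stable matrix}, with $\sigma_j=\partial_x^2\sigma_j=0$ at $x\in\{0,a\}$. Dualizing the one-dimensional null controllability (\Cref{null control 1d_cyl} together with \Cref{stable matrix}) then yields, for every $j\geq 1$, the scalar observability
\begin{equation*}
\|\sigma_j(0,\cdot)\|_{L^2(\Omega_x)}^2 \leq (C_{T,j}^{\Omega_x})^2\int_0^T|\partial_x\sigma_j(t,0)|^2\,dt,
\end{equation*}
with the explicit weight $C_{T,j}^{\Omega_x}=Ce^{Cj^{1/(N-1)}/T}$.

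By Parseval, $\|\Pi_{E_J}\sigma(0)\|_{L^2(\Omega)}^2 = \sum_{j=1}^{J}\|\sigma_j(0,\cdot)\|_{L^2(\Omega_x)}^2$. Summing the scalar observability from $j=1$ to $J$ and using the monotonicity of $C_{T,j}^{\Omega_x}$ in $j$, I would bound this quantity by $(C_{T,J}^{\Omega_x})^2\int_0^T \sum_{j=1}^{J}|\partial_x\sigma_j(t,0)|^2\,dt$. Applying the Lebeau--Robbiano inequality \eqref{lr_cyl} pointwise in $t$ to the real coefficients $a_j:=\partial_x\sigma_j(t,0)$ with cutoff $\mu=\mu_J^{\Omega_y}$ then turns this sum of squared modal traces into an integral of the squared partial sum over $\omega$, at the cost of the factor $Ce^{C\sqrt{\mu_J^{\Omega_y}}}$. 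Since $E_J$ is invariant under the adjoint flow, the finite sum $\sum_{j=1}^J\partial_x\sigma_j(t,0)\Psi_j^{\Omega_y}(y)$ is exactly $\partial_x(\Pi_{E_J}\sigma)(t,0,y)$, and this is the object to be identified with $\partial_x\sigma(t,0,y)$ in the statement; equivalently, the argument is really run on $\Pi_{E_J}\sigma$ itself, which is still a solution of \eqref{lin_NKS_adj}, so that the passage from $\sigma$ to $\Pi_{E_J}\sigma$ on the right-hand side costs nothing.

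The main delicate point is the interplay between the $j$-dependence of the one-dimensional control cost and the spectral weight from \eqref{lr_cyl}: the sum over $j\leq J$ produces the factor $(C_{T,J}^{\Omega_x})^2$, which must be combined cleanly with the factor $e^{C\sqrt{\mu_J^{\Omega_y}}}$. Weyl's law $\sqrt{\mu_J^{\Omega_y}}\sim J^{1/(N-1)}$ tells us that both contributions are exponentials of comparable strength in $J^{1/(N-1)}$; at the level of this proposition it is enough to record the combined constant $(C_{T,J}^{\Omega_x})^2\, e^{C\sqrt{\mu_J^{\Omega_y}}}$, and the harder question of whether this weight remains dominated by the dissipation factor $e^{-c\mu_{J+1}^{\Omega_y} T}$ is deferred to the Lebeau--Robbiano passive stage in the next section.
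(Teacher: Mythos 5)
Your proposal is correct and follows essentially the same route as the paper: expand in the transverse Dirichlet eigenbasis, dualize the one-dimensional null controllability of the perturbed system (\Cref{null control 1d_cyl} with \Cref{stable matrix}) to get the modal observability with cost $C_{T,j}^{\Omega_x}$, sum over $j\leq J$, and apply the spectral inequality \eqref{lr_cyl} to the coefficients $a_j=\partial_x\sigma_j(t,0)$. Your remark about running the argument on $\Pi_{E_J}\sigma$ matches the paper, which simply assumes $\sigma_T\in E_J$ from the outset (the only case needed for the duality in \Cref{dual}), so that the finite modal sum coincides with $\partial_x\sigma(t,0,y)$.
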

\begin{proof}
	Thanks to \Cref{normu}, let us first assume that $\sigma_T=\sum\limits_{j=1}^{J}\sigma^{j}_T(x)\Psi^{\oy}_j(y),$ for some $\sigma^j_T\in L^2(\ox).$ 
	Thus if we write 
	\begin{equation}
		\sigma(t,x,y)=\sum\limits_{j=1}^{J}\sigma^j(t,x)\Psi^{\oy}_j(y),
	\end{equation}
	then $\sigma^j$ satisfy the following
	\begin{equation}\label{adj2}
		\begin{cases}
			-\partial_t \sigma^j+\partial_{x}^4 \sigma^j+\left(\nu-2\mu^{\oy}_j\right)\partial_{x}^2 \sigma^j+\left((\mu^{\oy}_j)^2-\nu\mu^{\oy}_j\right)\sigma^j=0 & \quad  t\in (0,T), \;\; x\in \Omega_x,\\
			
\sigma^j(t,0)=\sigma^j(t,a)=0, \quad  \partial_{x}^2 \sigma^j(t,0)=\partial_{x}^2 \sigma^j(t,a)=0 & \quad t\in (0,T), \\
			\sigma^j(T,x)=\sigma^j_T(x) & \quad x\in \Omega_x.
		\end{cases}
	\end{equation}
	It is clear that the above system is the adjoint system for the one-dimensional KS equation \eqref{KS-oned 1_cyl}.
	As we know, system \eqref{KS-oned 1_cyl} is null controllable (by \Cref{null control 1d_cyl} and \Cref{stable matrix}) and its adjoint system satisfy the observability inequality
	\begin{equation}\label{obs1_cyl}
		\norm{\sigma^j(0)}^2_{L^2(\ox)}\leq (C_{T,j}^{\Omega_x})^2\int_{0}^{T}|\sigma_x^j(t,0)|^2 dt.
	\end{equation}
	Note that $\Pi_{E_j}\sigma(0)=\sigma(0).$ By the expression of $\sigma$ we can write the following
	\begin{align*}
		\norm{\sigma(0)}^2_{L^2(\Omega)}=\sum_{j=1}^{J}\norm{\sigma^j(0)}^2_{L^2(\ox)}.
	\end{align*}
	Thanks to \eqref{obs1_cyl}, we further have
	\begin{align}\label{ob_cyl}
		\norm{\sigma(0)}^2_{L^2(\Omega)}\leq (C_{T,J}^{\Omega_x})^2 \int_{0}^{T} \sum_{j=1}^{J}|\sigma_x^j(t,0)|^2 dt.
	\end{align}
	Let us put $a_j=\sigma^j_x(t,0)$ in the Lebeau-Robbiano spectral inequality \eqref{lr_cyl},  that is,
	\begin{equation}\label{ob1_cyl}
		\sum_{j=1}^{J}|\sigma^j_x(t,0)|^2\leq C e^{C\sqrt{\mu_J^{\Omega_y}}}\int_{\omega}\left|\sum_{j=1}^{J}\sigma^j_x(t,0) \Psi^{\oy}_j(y)\right|^2 dy.
	\end{equation}
	Combining \eqref{ob_cyl} and \eqref{ob1_cyl}, we have
	\begin{align}\label{ob2_cyl}
		\norm{\sigma(0)}^2_{L^2(\Omega)}\leq C (C_{T,J}^{\Omega_x})^2 e^{C\sqrt{\mu_J^{\Omega_y}}}\int_{0}^{T}\int_{\omega}\left|\sum_{j=1}^{J}\sigma^j_x(t,0) \Psi^{\oy}_j(y)\right|^2 dy,
	\end{align}
	which gives the desired result.
\end{proof}
Next, by classical
duality arguments, we can prove the following.
\begin{proposition}\label{dual}
	Let $T>0.$ there exists $C_T>0$ such that the following two properties are equivalent
	\begin{itemize}
		\item For every $u_0\in E_J,$ there exists a control $q\in L^2(0,T;L^2(\Gamma))$ such that
		\begin{equation*}
			\begin{cases}
				\Pi_{E_J}u(T)=0\\
				\norm{q}_{L^2(0,T;L^2(\Gamma))}\leq  C_T\norm{u_0}_{L^2(\Omega)},
			\end{cases}
		\end{equation*} 
		where $u$ is the solution of \eqref{lin_NKS_cyl}.
		\item For all $\sigma_T\in E_J$, the solution $\sigma$ of the adjoint system	\eqref{lin_NKS_adj} 
		satisfies the following
		\begin{equation*}
			\norm{\Pi_{E_J}\sigma(0)}_{L^2(\Omega)}^2\leq  (C_T)^2 \int_{0}^{T}\int_{\omega}|\pa_x \sigma(t,0,y)|^2 dy.
		\end{equation*}
	\end{itemize}
\end{proposition}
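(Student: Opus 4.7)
The plan is to apply the standard Hilbert Uniqueness Method (HUM) duality, adapted to the finite-dimensional projected setting. The starting point is the transposition identity \eqref{iden_transposition} (with $f\equiv 0$ and $\tau=T$), which relates the primal solution of \eqref{lin_NKS_cyl} and the adjoint solution of \eqref{lin_NKS_adj} through a boundary integral of the form $\int_0^T\!\int_\omega q(t,y)\,\partial_x\sigma(t,0,y)\,dy\,dt$ (up to the sign coming from the outward normal on $\{0\}\times\Omega_y$). A key structural observation, already implicit in the proof of \Cref{par obs_cyl}, is that the subspace $E_J$ is invariant under the adjoint flow: if $\sigma_T=\sum_{j=1}^J \sigma_T^j(x)\Psi_j^{\Omega_y}(y)\in E_J$, then the $(x,y)$-separation of the operator $\Delta^2+\nu\Delta$ yields $\sigma(t,x,y)=\sum_{j=1}^J \sigma^j(t,x)\Psi_j^{\Omega_y}(y)$ with each $\sigma^j$ solving \eqref{adj2}. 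In particular, $\Pi_{E_J}\sigma(0)=\sigma(0)$ whenever $\sigma_T\in E_J$.

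For the implication observability $\Rightarrow$ controllability, I would introduce on the finite-dimensional space $E_J$ the HUM functional
\begin{equation*}
\mathcal{J}(\sigma_T)=\frac{1}{2}\int_0^T\!\int_\omega |\partial_x\sigma(t,0,y)|^2\,dy\,dt+\langle u_0,\sigma(0)\rangle_{L^2(\Omega)}.
\end{equation*}
The observability inequality, combined with the invariance of $E_J$, gives that $\mathcal J$ is continuous, strictly convex, and coercive on $E_J$, hence admits a unique minimizer $\hat\sigma_T$. Writing its Euler--Lagrange equation and setting $q(t,y):=-\partial_x\hat\sigma(t,0,y)\mathbf{1}_\omega(y)$ produces, through the transposition identity, a control satisfying $\langle u(T),\sigma_T\rangle=0$ for every $\sigma_T\in E_J$, i.e.\ $\Pi_{E_J}u(T)=0$. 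The bound $\|q\|_{L^2(0,T;L^2(\Gamma))}\leq C_T\|u_0\|_{L^2(\Omega)}$ then follows from comparing $\mathcal J(\hat\sigma_T)\leq \mathcal J(0)=0$ with the observability estimate.

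For the converse, given $\sigma_T\in E_J$, apply the controllability hypothesis to the initial datum $u_0:=\Pi_{E_J}\sigma(0)=\sigma(0)\in E_J$ to obtain a control $q$ with $\|q\|_{L^2(0,T;L^2(\Gamma))}\leq C_T\|\Pi_{E_J}\sigma(0)\|_{L^2(\Omega)}$ and $\Pi_{E_J}u(T)=0$. Plugging into the transposition identity and using $\sigma_T\in E_J$ (so that $\langle u(T),\sigma_T\rangle=\langle\Pi_{E_J}u(T),\sigma_T\rangle=0$) gives
\begin{equation*}
\|\Pi_{E_J}\sigma(0)\|_{L^2(\Omega)}^2 \;\leq\; \|q\|_{L^2(0,T;L^2(\Gamma))}\,\|\partial_x\sigma(\cdot,0,\cdot)\|_{L^2(0,T;L^2(\omega))},
\end{equation*}
and dividing by $\|\Pi_{E_J}\sigma(0)\|_{L^2(\Omega)}$ produces the desired observability with the same constant $C_T$.

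The proof is essentially bookkeeping once the transposition identity and the invariance of $E_J$ are in hand. The only mildly delicate point is to ensure that the transposition identity \eqref{iden_transposition} can be applied for a general $L^2$ boundary control with adjoint data in $E_J$, which is guaranteed by the well-posedness framework developed in \Cref{sec:framework} together with the regularity of $\sigma$ stated in \Cref{lem:reg_backward}; no further technicalities are expected.
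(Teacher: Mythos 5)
Your converse direction is complete and correct: by the invariance of $E_J$ under the adjoint flow you may take $u_0=\sigma(0)\in E_J$, use $\langle u(T),\sigma_T\rangle=\langle \Pi_{E_J}u(T),\sigma_T\rangle=0$ in the transposition identity, and Cauchy--Schwarz gives the observability inequality with the same constant. Since the paper offers no proof beyond invoking ``classical duality arguments'', your overall plan is the natural one.

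However, the forward direction as written has a genuine gap. You describe $E_J$ as finite-dimensional, but it is not: $E_J$ only truncates the $y$-frequencies, while the $x$-dependence ranges over all of $L^2(\Omega_x)$, so $E_J\cong (L^2(\Omega_x))^J$. Consequently the claim that $\mathcal J$ is coercive on $E_J$ (and hence has a minimizer) fails: the observability inequality bounds only $\|\sigma(0)\|_{L^2(\Omega)}$, not $\|\sigma_T\|_{L^2(\Omega)}$, and data $\sigma_T$ concentrated on high $x$-frequencies (e.g.\ $\sigma_T=c_k\sin(k\pi x/a)\Psi_1^{\Omega_y}$ with $c_k$ growing slowly) have $\|\sigma_T\|\to\infty$ while both $\sigma(0)$ and the boundary observation tend to zero, so $\mathcal J$ stays bounded along unbounded sequences and minimizing sequences need not be bounded in $E_J$. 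The standard repairs are: (i) minimize the penalized functionals $\mathcal J_\epsilon(\sigma_T)=\mathcal J(\sigma_T)+\epsilon\|\sigma_T\|_{L^2(\Omega)}$ and pass to the limit $\epsilon\to 0$, using the observability inequality to get a bound on the controls uniform in $\epsilon$; (ii) minimize on the completion of $E_J$ with respect to the observation seminorm; or (iii) invoke the abstract controllability--observability duality lemma (Dolecki--Russell type, e.g.\ \cite[Lemma 2.48]{Cor07}), which is presumably what the authors mean by ``classical duality arguments'' and bypasses the minimization entirely. A further minor point: deducing the control bound from $\mathcal J(\hat\sigma_T)\le\mathcal J(0)=0$ loses a factor $2$ (you get $\|q\|\le 2C_T\|u_0\|$); testing the Euler--Lagrange equation with $\hat\sigma_T$ itself yields $\|q\|\le C_T\|u_0\|$ exactly, matching the statement.
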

The above results, \Cref{par obs_cyl} and \Cref{dual}, imply the following.
%The above theorem implies the following
\begin{corollary}
	For every $J\geq 1,$ and $u_0\in E_J,$ there exists a control $q_{u_0}\in L^2(0,T; L^2(\Gamma))$ with
	\begin{equation}\label{con cost_cyl}
		\norm{q_{u_0}}_{L^2(0,T;L^2(\Gamma))}\leq C (C_{T,J}^{\Omega_x}) e^{C\sqrt{\mu_J^{\Omega_y}}}\norm{u_0}_{L^2(\Omega)},
	\end{equation}
	such that the solution of the equation \eqref{lin_NKS_adj} satisfies $\Pi_{E_J}u(T)=0.$
\end{corollary}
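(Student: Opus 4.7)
The plan is to combine the two immediately preceding results in a direct manner. Starting from a fixed $J \geq 1$ and $u_0 \in E_J$, I would invoke \Cref{par obs_cyl}, which asserts that for every terminal datum $\sigma_T \in E_J$ the corresponding solution $\sigma$ of the adjoint system \eqref{lin_NKS_adj} obeys
\begin{equation*}
\|\Pi_{E_J}\sigma(0)\|_{L^2(\Omega)}^2 \leq C\,(C_{T,J}^{\Omega_x})^2\, e^{C\sqrt{\mu_J^{\Omega_y}}} \int_0^T \int_\omega |\partial_x \sigma(t,0,y)|^2 \, dy\, dt.
\end{equation*}
The right-hand side is exactly of the form $(C_T)^2 \int_0^T\int_\omega |\partial_x \sigma(t,0,y)|^2\, dy\, dt$ appearing in the second bullet of \Cref{dual}, provided one defines
\begin{equation*}
C_T := \sqrt{C}\, C_{T,J}^{\Omega_x}\, e^{(C/2)\sqrt{\mu_J^{\Omega_y}}}.
\end{equation*}

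Next, I would apply the equivalence supplied by \Cref{dual}. Since the second property (observability on $E_J$) holds with this constant $C_T$, the first property (controllability of the projection onto $E_J$) must also hold with the same constant. This immediately produces a control $q_{u_0} \in L^2(0,T;L^2(\Gamma))$ such that the solution $u$ of \eqref{lin_NKS_cyl} with initial datum $u_0$ satisfies $\Pi_{E_J}u(T) = 0$, together with the bound $\|q_{u_0}\|_{L^2(0,T;L^2(\Gamma))} \leq C_T \|u_0\|_{L^2(\Omega)}$.

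To conclude, I would absorb the multiplicative prefactor $\sqrt{C}$ and the exponential factor $1/2$ into a single generic constant $C>0$, which yields precisely the estimate \eqref{con cost_cyl}. Since the argument is essentially a duality translation of two propositions already proved above, no real obstacle arises; the only point requiring care is the bookkeeping of constants, in particular making sure that the $j$-dependence hidden in $C_{T,J}^{\Omega_x}$ (inherited from \Cref{null control 1d_cyl}) and the $\sqrt{\mu_J^{\Omega_y}}$ contribution from the Lebeau–Robbiano spectral inequality \eqref{lr} appear multiplicatively in the final cost, as needed for the subsequent Lebeau–Robbiano iteration scheme.
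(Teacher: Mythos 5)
Your argument is correct and coincides with the paper's (implicit) proof: the paper simply states that \Cref{par obs_cyl} and \Cref{dual} together imply the corollary, which is exactly the duality translation you carry out, with the constant $C_T = \sqrt{C}\,C_{T,J}^{\Omega_x} e^{(C/2)\sqrt{\mu_J^{\Omega_y}}}$ absorbed into a generic $C$. No gap; your bookkeeping of the $J$-dependence matches what is needed later in the Lebeau--Robbiano iteration.
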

\subsection{Dissipation along the $\Omega_y$ direction}
Another essential component of the Lebeau–Robbiano strategy is exploiting the system's natural dissipation in the absence of control. In our setting, it is crucial to establish exponential decay in the $\Omega_y$ direction.
\begin{proposition}\label{dis_cyl}
	Let us denote $K_0=\min\{k\in \N: \mu_k^{\oy}>\nu\}.$
	Let us consider the system \eqref{lin_NKS_cyl} and assume that in some time interval $(t_0,t_1)$ we put $q=0$ and also assume that for all $J\geq K_0,$ $\Pi_{E_J}u(t_0)=0.$ Then we have the following estimate
	\begin{equation}\label{dis1_cyl}
		\norm{u(t)}_{L^2(\Omega)}\leq Ce^{\lambda^{\oy}_{J+1}(t-t_0)}\norm{u(t_0)}_{L^2(\Omega)} \quad \forall t\in (t_0,t_1).
	\end{equation}
\end{proposition}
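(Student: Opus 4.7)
The plan is to diagonalise the uncontrolled equation in the joint eigenbasis $\{\Psi^{\ox}_k\,\Psi^{\oy}_j\}_{k,j\ge 1}$ of the operator $\mathcal A = -\Delta^2 - \nu\Delta$ under Navier boundary conditions, and then simply read off the decay rate of every surviving mode. The hypothesis $\Pi_{E_J}u(t_0)=0$ will remove all modes with transverse index $j\le J$, and the conclusion will follow from the monotonicity of $f(x)=-x^2+\nu x$ on $(\nu/2,\infty)$.

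First, using \Cref{normu} I would write $u(t,x,y)=\sum_{j\ge 1}u^{j}(t,x)\Psi^{\oy}_j(y)$ and further expand $u^{j}(t,\cdot)=\sum_{k\ge 1}c_{k,j}(t)\Psi^{\ox}_k$. Since $q=0$ on $(t_0,t_1)$, a direct substitution decouples the equation mode by mode. Using the identity $p^2-(\nu-2q)p+q^2-\nu q=(p+q)^2-\nu(p+q)$ with $p=k^2\pi^2/a^2$ and $q=\mu^{\oy}_j$, one sees that $\Psi^{\ox}_k\,\Psi^{\oy}_j$ is an eigenfunction of $\mathcal A$ with eigenvalue $-\tilde\mu_{k,j}^2+\nu\tilde\mu_{k,j}$, where $\tilde\mu_{k,j}:=k^2\pi^2/a^2+\mu^{\oy}_j$. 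Consequently the coefficients satisfy the decoupled ODEs
\begin{equation*}
\dot c_{k,j}(t)=\bigl(-\tilde\mu_{k,j}^2+\nu\tilde\mu_{k,j}\bigr)c_{k,j}(t),\qquad t\in(t_0,t_1),
\end{equation*}
so that $c_{k,j}(t)=c_{k,j}(t_0)\,e^{(-\tilde\mu_{k,j}^2+\nu\tilde\mu_{k,j})(t-t_0)}$.

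Second, the hypothesis $\Pi_{E_J}u(t_0)=0$ is equivalent to $c_{k,j}(t_0)=0$ for every $k\ge 1$ and every $1\le j\le J$. Hence, by Parseval,
\begin{equation*}
\|u(t)\|_{L^2(\Omega)}^2\;=\;\sum_{j\ge J+1}\sum_{k\ge 1}|c_{k,j}(t_0)|^2\,e^{2(-\tilde\mu_{k,j}^2+\nu\tilde\mu_{k,j})(t-t_0)},
\end{equation*}
and the whole proof reduces to the uniform bound $-\tilde\mu_{k,j}^2+\nu\tilde\mu_{k,j}\le\lambda^{\oy}_{J+1}$ for all $k\ge 1$ and $j\ge J+1$. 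This is exactly where the assumption $J\ge K_0$ enters: it yields $\mu^{\oy}_{J+1}>\nu$, so $\tilde\mu_{k,j}\ge\mu^{\oy}_j\ge\mu^{\oy}_{J+1}>\nu/2$. Since $f(x)=-x^2+\nu x$ is strictly decreasing on $(\nu/2,\infty)$, one obtains $f(\tilde\mu_{k,j})\le f(\mu^{\oy}_{J+1})=\lambda^{\oy}_{J+1}$. Plugging back into the Parseval identity yields \eqref{dis1_cyl} with constant $C=1$.

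The only genuine technical point, rather than a true obstacle, is to justify the termwise differentiation and summation above when $u(t_0)$ is merely in $L^2(\Omega)$; this is standard and can be handled by approximating $u(t_0)$ in $L^2(\Omega)$ by elements of $\mathcal D(\mathcal A)$, carrying out the argument at the regular level, and then passing to the limit using the a priori estimate from \Cref{prop:wp_noncontr}. Beyond this, I do not anticipate any serious difficulty, as the whole argument is driven by one single observation: once we are above the threshold $K_0$, the spectrum of $\mathcal A$ is negative and monotone in $j$, and $\Pi_{E_J}$ precisely chops off the modes for which this monotonicity would fail.
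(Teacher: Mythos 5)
Your argument is correct, and it reaches the same estimate (in fact with the sharper constant $C=1$) by a route that differs from the paper's in how the one-dimensional pieces are handled. The paper only decomposes along $\Omega_y$: it writes $u=\sum_j u^j(t,x)\Psi^{\oy}_j(y)$, observes that each $u^j$ solves the perturbed 1-D equation \eqref{ks2_cyl}, and then runs an energy estimate (multiply by $u^j$, integrate by parts, discard the nonpositive terms $-\int|\partial_x^2u^j|^2$ and $(\nu-2\mu^{\oy}_j)\int|\partial_xu^j|^2$, the latter nonpositive precisely because $j\geq K_0$) to get $\frac{d}{dt}\|u^j\|^2\leq \lambda^{\oy}_j\|u^j\|^2$ and hence decay at rate $\lambda^{\oy}_{J+1}$. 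You instead diagonalise completely in the joint basis $\Psi^{\ox}_k\Psi^{\oy}_j$, which is legitimate here because $\Omega_x$ is an interval and the Navier conditions make the $x$-eigenfunctions explicit sines; your identity $(p+q)^2-\nu(p+q)$ with $p=k^2\pi^2/a^2$, $q=\mu^{\oy}_j$ correctly exhibits $\Psi^{\ox}_k\Psi^{\oy}_j$ as an eigenfunction of $\mathcal A$, and the bound $f(\tilde\mu_{k,j})\leq f(\mu^{\oy}_{J+1})=\lambda^{\oy}_{J+1}$ from monotonicity of $f(x)=-x^2+\nu x$ on $(\nu/2,\infty)$ (valid since $J\geq K_0$ forces $\tilde\mu_{k,j}\geq\mu^{\oy}_{J+1}>\nu$) replaces the paper's integration by parts. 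What each approach buys: yours gives the exact modal decay rates and the clean constant $C=1$, while the paper's energy argument does not require the explicit $x$-spectrum and so is a bit more robust to changes of $\Omega_x$ or of the boundary conditions in $x$; both hinge on the same structural fact that above the threshold $K_0$ the coefficient $\nu-2\mu^{\oy}_j$ is negative. Your closing remark about justifying termwise differentiation by approximating $u(t_0)$ in $\mathcal D(\mathcal A)$ (or simply by invoking the spectral theorem for the self-adjoint operator $\mathcal A$ with compact resolvent from \Cref{lem:opA}) is an adequate treatment of the only technical point.
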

\begin{remark}
	Note that $\forall J\geq K_0, \, \lambda^{\oy}_{J+1}<0.$
\end{remark}

\begin{proof}[Proof of \Cref{dis_cyl}]
	Let us express the solution of the KS system\eqref{lin_NKS_cyl} in $(t_0,t_1)$ as
	\begin{equation*}
		u(t,x,y)=\sum\limits_{j=1}^{\infty}u^j(t,x)\Psi^{\oy}_j(y),
	\end{equation*}
	where $u^j$ satisfy the following equation in $\ox$
	\begin{equation}\label{ks2_cyl}
		\begin{cases}
			\partial_t u^j+\partial_{x}^4 u^j+\left(\nu-2\mu^{\oy}_j\right)\partial_{x}^2 u^j+\left((\mu^{\oy}_j)^2-\nu\mu^{\oy}_j\right)u^j=0 & \quad  t\in (t_0,t_1), \;\; x\in \Omega_x,\\
			u^j(t,0)=u^j(t,a)=0, \quad  \partial_{x}^2 u^j(t,0)=\partial_{x}^2 u^j(t,a)=0& \quad t\in (t_0,t_1).
			%u^j(0,x)=\sigma^j_T(x), & x\in \Omega_x.
		\end{cases}
	\end{equation}
	It is given that $\Pi_{E_J}u(t_0)=0.$ This gives that 	\begin{equation}\label{exp for u_cyl}
		u(t,x,y)=\sum\limits_{j=J+1}^{\infty}u^j(t,x)\Psi^{\oy}_j(y).
	\end{equation}
	Let us multiply both sides of the equation \eqref{ks2_cyl} with $u^j$ and then integrate in $\ox.$ Performing integration by parts, we have $\forall j\geq J+1(\geq K_0)$
	\begin{align*}
		\frac{d}{dt}\int_{0}^{a}|u^j(t,x)|^2 dx&\leq -\int_{0}^{a}|\pa_x^2u^j(t,x)|^2+\left(\nu-2\mu^{\oy}_j\right)\int_{0}^{a}|\pa_x u^j(t,x)|^2+\lambda^{\oy}_{j}\int_{0}^{a}|u^j(t,x)|^2\\
		&\leq \lambda^{\oy}_{j}\int_{0}^{a}|u^j(t,x)|^2.
	\end{align*}
	Using the above inequality for the system $\eqref{ks2_cyl},$ (thanks to the assumption $J\geq K_0$) we can prove that $\forall j\geq J+1$
	\begin{equation}\label{uj_cyl}
		\norm{u^j(t)}_{L^2(\ox)}\leq e^{\lambda^{\oy}_{J+1}(t-t_0)}\norm{u^j(t_0)}_{L^2(\Omega_x)}, \quad \forall t\in (t_0,t_1)
	\end{equation}
	Thanks to the expression \eqref{exp for u_cyl}, the estimate \eqref{uj_cyl} and noting the fact that $\norm{u(t)}^2_{L^2(\Omega)}=\sum_{j=J+1}^{\infty}\norm{u^{j}(t)}^2_{L^2(\Omega_x)}$ we obtain the desired result.
\end{proof} 
	\subsection{The Lebeau-Robbiano approach}\label{LR_md}
	In this section we will prove the boundary null controllability of the linearized KS equation by using the classical approach introduced in \cite{LR}. We divide the time interval $(0,T)$ in a infinite sequence of smaller interval with appropriate length which will be chosen later along with suitable cut off frequencies to steer the solution to $0$ in time $T.$
	
	Let us choose $u_0\in L^2(\Omega).$ We decompose the time interval $[0,T)$ as follows:
	\begin{align*}
		[0,T)=\cup_{k=0}^{\infty}[a_k, a_{k+1}]
	\end{align*}
	with $a_0=0, a_{k+1}=a_k+2T_k, \, T_k=\frac{\alpha}{\beta} 2^{-k\rho}$, where $\rho\in (0,\frac{1}{N-1}), \alpha=\frac{\beta T}{2}(1-2^{-\rho}), \beta\in \N$ so that we have $2\sum\limits_{k=0}^{\infty}T_k=T.$ We choose the frequencies as $\gamma_k=\beta 2^k$, where $\beta$ is a large real number such that $\gamma_k>K_0, \forall k\geq 0$, where $K_0$ is defined in \Cref{dis_cyl}.
	
	Next, for all $k\geq 0$, we construct a control $q$ and the solution $u$ of the system \eqref{lin_NKS_cyl} by induction as follows
	\begin{equation*}
		q(t)=\begin{cases}
			q\left({\Pi_{E_{\gamma_k}}u(a_k)}\right)(t)& \text{ if } t \in (a_k, a_k+T_k),\\
			0 & \text{ if } t\in (a_k+T_k, a_{k+1}).
		\end{cases}
	\end{equation*}
	Our main goal is to show that the control $q\in L^2(0,T; L^2(\Gamma))$ steers the solution $u$ to rest in time $T.$
	\subsubsection{\bf Estimate on the interval $[a_k, a_k+T_k]$}
	Thanks to the continuity estimate \eqref{eq:apriori_full} with $f=0$, we have as $T_k\leq T$
	\begin{equation}\label{uak_cyl}
		\norm{u(a_k+T_k)}_{L^2(\Omega)}\leq C\left(\norm{u(a_k)}_{L^2(\Omega)}+\norm{q}_{L^2(a_k,a_k+T_k;L^2(\Gamma))}\right).
	\end{equation}
	Using the estimate \eqref{con cost_cyl} of the control  we have
	\begin{align*}
		\norm{q}_{L^2(a_k,a_k+T_k;L^2(\Gamma))}\leq C (C_{T_k,\gamma_k}^{\Omega_x}) e^{C\sqrt{\mu_{\gamma_k}^{\Omega_y}}}\norm{\Pi_{E_{\gamma_k}}u(a_k)}_{L^2(\Omega)}.
	\end{align*}
	As $\norm{\Pi_{E_{\gamma_k}}}_{\mathcal{L}_{L^2(\Omega)}}\leq 1$, and using the estimate \eqref{cost_cyl} of the control cost $C^{\ox}_{T_k,\gamma_k}$ defined in \Cref{par obs_cyl}, the above inequality can be written in the form
	\begin{align*}
		\norm{q}_{L^2(a_k,a_k+T_k;L^2(\Gamma))}\leq Ce^{C\left(\frac{\gamma_k^{\frac{1}{N-1}}}{T_k}+\sqrt{\mu_{\gamma_k}^{\Omega_y}}\right)} \norm{u(a_k)}_{L^2(\Omega)}.
	\end{align*}
	By Weyl's law we have $\sqrt{\mu_{\gamma_k}^{\Omega_y}} \sim_{j\to +\infty} C\beta^{\frac{1}{N-1}}2^{\frac{k}{N-1}}$
	We have \begin{align*}\gamma_k^{\frac{1}{N-1}}=\beta^{\frac{1}{N-1}} 2^{\frac{k}{N-1}}\leq C \beta 2^{\frac{k}{N-1}}, \quad  \frac{1}{T_k}=\frac{\beta}{\alpha}2^{k\rho}\leq C\beta 2^{\frac{k}{N-1}}\quad \text{and} \quad\sqrt{\mu_{\gamma_k}^{\Omega_y}}\leq C \beta 2^{\frac{k}{N-1}},
	\end{align*}
	which yields
	\begin{align}\label{control est lr_cyl}
		\norm{q}_{L^2(a_k,a_k+T_k;L^2(\Gamma))}\leq Ce^{C\beta 2^{\frac{2k}{N-1}}} \norm{u(a_k)}_{L^2(\Omega)}.
	\end{align} 
	Thanks to \eqref{uak_cyl}, we obtain
	\begin{equation}\label{uak1}
		\norm{u(a_k+T_k)}_{L^2(\Omega)}\leq C\left(1+e^{C\beta 2^{\frac{2k}{N-1}}}\right)\norm{u(a_k)}_{L^2(\Omega)}\leq  C e^{C\beta 2^{\frac{2k}{N-1}}}\norm{u(a_k)}_{L^2(\Omega)}.
	\end{equation}
	\subsubsection{\bf Estimate on the interval $[a_k+T_k, a_{k+1}]$} Since $\Pi_{E_{\gamma_k}}u(a_k+T_k)=0,$ using the dissipation result \eqref{dis1_cyl} we have
	\begin{equation*}
		\norm{u(a_{k+1})}_{L^2(\Omega)}\leq C e^{\lambda^{\oy}_{\gamma_k+1}T_k}\norm{u(a_k+T_k)}_{L^2(\Omega)}.
	\end{equation*}
	\subsubsection{\bf Final Estimate}Amalgamating the last two estimates, we write
	\begin{equation*}
		\norm{u(a_{k+1})}_{L^2(\Omega)}\leq C e^{\lambda^{\oy}_{\gamma_k+1}T_k} e^{C\beta 2^{\frac{2k}{N-1}}}\norm{u(a_k)}_{L^2(\Omega)}.
	\end{equation*}
	By induction we have
	\begin{equation*}
		\norm{u(a_{k+1})}_{L^2(\Omega)}\leq C e^{\sum_{p=0}^{k}\left(\lambda^{\oy}_{\gamma_p+1}T_p+C\beta 2^{\frac{2p}{N-1}}\right)} \norm{u_0}_{L^2(\Omega)}.
	\end{equation*}
	%There exists $p_0\in \N$ such tha
	We have \begin{align*}-\lambda^{\oy}_{\gamma_p+1}=\left((\mu^{\oy}_{\gamma_p+1})^2-\nu\mu^{\oy}_{\gamma_p+1}\right)\geq C_1(\mu^{\oy}_{\gamma_p+1})^2\geq C_2 \beta^42^{\frac{4p}{N-1}}>C_2\beta^2 2^{\frac{4p}{N-1}}. %\forall p>p_0.
	\end{align*}
	Since  \begin{align*}
		\lambda^{\oy}_{\gamma_p+1}T_p=\frac{\alpha}{\beta}2^{-p\rho}\lambda^{\oy}_{\gamma_p+1}\leq -C_3\beta 2^{p\left(\frac{4}{N-1}-\rho\right)}.
	\end{align*}
	we obtain %for all $k\geq p_0$ 
	\begin{equation*}
		\norm{u(a_{k+1})}_{L^2(\Omega)}\leq C e^{\sum_{p=0}^{k}\left(-C_3 \beta 2^{p\left(\frac{4}{N-1}-\rho\right)}+C\beta 2^{\frac{2p}{N-1}}\right)} \norm{u_0}_{L^2(\Omega)}.
	\end{equation*}
	There exists a $l_0\in \N$ such that $\left(-C_3 \beta 2^{p\left(\frac{4}{N-1}-\rho\right)}+C\beta 2^{\frac{2p}{N-1}}\right)\leq -C_4 \beta 2^{p\left(\frac{4}{N-1}-\rho\right)},  \forall p\geq l_0.$
	Therefore for all $k>l_0$ we have
	\begin{equation*}\sum_{p=0}^{k}\left(-C_3 \beta 2^{p\left(\frac{4}{N-1}-\rho\right)}+C\beta 2^{\frac{2p}{N-1}}\right)\leq C'\beta-C_4\beta\sum_{p=l_0}^{k}2^{p\left(\frac{4}{N-1}-\rho\right)}\leq C'\beta-C''\beta 2^{k\left(\frac{4}{N-1}-\rho\right)}.
	\end{equation*}
	Finally we have \begin{equation}\label{u at time ak_cyl}
		\norm{u(a_{k+1})}_{L^2(\Omega)}\leq C e^{-C\beta  2^{k\left(\frac{4}{N-1}-\rho\right)}} \norm{u_0}_{L^2(\Omega)}.
	\end{equation}
	\subsubsection{\bf Control function} Estimates \eqref{control est lr_cyl} and \eqref{u at time ak_cyl} shows that  $q\in L^2(0,T; L^2(\Gamma))$.
	Indeed
	\begin{align*}
		\norm{q}_{L^2(0,T; L^2(\Gamma))}&=\sum_{k=0}^{\infty}\norm{q}_{L^2(a_k,a_k+T_k;L^2(\Gamma))}\\
		&\leq Ce^{C\beta}\norm{u_0}_{L^2(\Omega)}+C\sum_{k=0}^{\infty}e^{C\left(\beta-{C_1\beta  2^{k\left(\frac{4}{N-1}-\rho\right)}}+\beta2^{\frac{2(k+1)}{N-1}}\right)}\norm{u_0}_{L^2(\Omega)}\\
		&\leq Ce^{C\beta}\norm{u_0}_{L^2(\Omega)}+Ce^{C\beta}\left(\sum_{k=p_0}^{\infty}e^{-C''' 2^{k\left(\frac{4}{N-1}-\rho\right)}}\right)\norm{u_0}_{L^2(\Omega)}\\
		&\leq Ce^{C\beta}\norm{u_0}_{L^2(\Omega)}<\infty.
	\end{align*}
	We also have the controllability
	\begin{equation*}
		\norm{u(T)}_{L^2(\Omega)}=\lim_{k\mapsto \infty}\norm{u(a_{k+1})}_{L^2(\Omega)}=0.
	\end{equation*}

	%\begin{remark}
		To find the control cost, let us choose first choose $T<1$ and without loss of generality we can take $\frac{1}{T}=N_0,$ where $N_0\in \N.$ Then choose $\beta=\frac{\rho_0}{T},$ where $\rho_0$ is some large natural number independent of $T.$ Therefore, by the previous analysis we can say that the control for the linear problem \eqref{lin_NKS_cyl} satisfies the control cost $Ce^{C/T}.$ The case $T \ge 1$ can be reduced to the previous one. Indeed, any control defined on $(0, \frac{1}{2})$ can be extended by zero to a control on $(0,T)$, and the corresponding estimate then follows from the fact that the control cost decreases with respect to time.

\subsection{Proof of \Cref{Ln_KSE}}The proof follows from combining the results shown in the previous sections. Thanks to \Cref{LR_md}, we have proved the null controllability of \eqref{lin_NKS_cyl} assuming $\nu \notin \mathcal{N}$ with the desired control cost estimate.
The necessary part is implied by the result in one-dimension
(see \Cref{null control 1d_cyl}). Indeed, by using Fourier decomposition in the $\Omega_y$-direction, one can prove that the
controllability of the multi-dimensional system \eqref{lin_NKS_cyl} implies the controllability of the 1-D
system \eqref{KS-oned_cyl}. Thus, as $\nu\notin \mathcal N$ is necessary for the controllability of 1-D system \eqref{KS-oned_cyl}, it is also necessary for
the null controllability of \eqref{lin_NKS_cyl}. Hence, the proof of \Cref{Ln_KSE} is now complete. \qed

\section{Internal controllability problem}\label{sec:internal} 

In this section, we prove the controllability of the KS equation with interior control, i.e., \Cref{thm_int}.
We begin by employing the moment method to establish a pointwise null controllability result for the one-dimensional Kuramoto-Sivashinsky equation
\begin{equation}\label{int_oned1}
	\begin{cases}
		\partial_t v+\partial_{x}^4 v+\left(\nu-2\mu^{\oy}_j\right)\partial_{x}^2 v=\delta_{x_0}h(t) &  \quad t\in (0,T),  \;\; x\in \Omega_x,\\
		v(t,0)=v(t,a)=0, \quad \partial_{x}^2 v(t,0)=\partial_{x}^2 v(t,a)=0 & \quad t\in (0,T), \\
		v(0,x)=v_0(x) & \quad x\in \Omega_x.
	\end{cases}
\end{equation}
For similar control problem concerning to the heat equation, let us refer to the works \cite{D73} and \cite{L17}. We first state the corresponding well-posedness result for the above system.
%If $h\in L^2(0,T)$, it is immediate that the system \eqref{int_oned} has a unique solution $v$ and it satisfies $v\in C([0,T]; L^2(\Omega_x))\cap L^2(0,T;\mathcal H(\Omega)).$ 
\begin{lemma}\label{lm1}
	For any initial state $v_0\in L^2(\Omega_x)$ and any control function $h\in L^2(0,T)$, equation \eqref{int_oned1} possesses a unique solution $v$ in the space $L^2(0,T; H^2(\Omega_x)\cap H^1_0(\Omega_x))\cap\mc{C}([0,T];L^2(\Omega_x)).$ Moreover, we have the following estimate
	\begin{equation*}
		\norm{v}_{L^2(0,T;H^2(\Omega_x)\cap H^1_0(\Omega_x))}+\norm{v}_{\mc{C}^0([0,T];L^2(\Omega_x))}\leq C\left(\norm{v_0}_{L^2(\Omega_x)}+\norm{h}_{L^2(0,T)}\right),
	\end{equation*}
	for some constant $C>0$. 
\end{lemma}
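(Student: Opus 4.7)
The main technical point is that the source term $\delta_{x_0}h(t)$ is a distribution rather than an $L^2$ function, so the semigroup framework used in \Cref{sec:framework} does not apply verbatim. The plan is to proceed by spectral decomposition: the operator $A$ diagonalizes along $\{\Psi_k^{\Omega_x}\}_{k\geq 1}$, and applying a Dirac mass to an eigenfunction simply produces the pointwise evaluation $\Psi_k^{\Omega_x}(x_0)$. Since these evaluations are uniformly bounded and the eigenvalues $-\lambda_k^{\Omega_x}$ grow like $k^4$, the resulting series will converge in the target norms.

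Expanding $v_0=\sum_k v_{0,k}\Psi_k^{\Omega_x}$ and seeking $v(t,x)=\sum_k v_k(t)\Psi_k^{\Omega_x}(x)$, formal testing against $\Psi_k^{\Omega_x}$ reduces \eqref{int_oned1} to the decoupled scalar Cauchy problems
\begin{equation*}
v_k'(t)-\lambda_k^{\Omega_x} v_k(t)=\Psi_k^{\Omega_x}(x_0)\,h(t), \qquad v_k(0)=v_{0,k},
\end{equation*}
whose Duhamel solution is
\begin{equation*}
v_k(t)=e^{\lambda_k^{\Omega_x} t}v_{0,k}+\Psi_k^{\Omega_x}(x_0)\int_{0}^{t}e^{\lambda_k^{\Omega_x}(t-s)}h(s)\,ds.
\end{equation*}
This defines a candidate $v$, and a standard truncation/density argument will show that $v$ solves \eqref{int_oned1} in the transposition sense.

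For the estimates I would use the Parseval identities $\norm{v(t)}_{L^2(\Omega_x)}^2=\sum_k |v_k(t)|^2$ and $\norm{v(t)}_{H^2\cap H_0^1(\Omega_x)}^2\sim \sum_k k^4|v_k(t)|^2$. After shifting the operator by a constant (as in \Cref{stable matrix}) so that all eigenvalues may be taken strictly negative, at the cost of a harmless factor $e^{c_0 T}$, Cauchy--Schwarz gives
\begin{equation*}
\Big|\int_0^t e^{\lambda_k^{\Omega_x}(t-s)}h(s)\,ds\Big|^2\leq \frac{\norm{h}_{L^2(0,T)}^2}{2|\lambda_k^{\Omega_x}|}.
\end{equation*}
Combined with the uniform bound $|\Psi_k^{\Omega_x}(x_0)|\leq \sqrt{2/a}$ and the summability of $\sum_k |\lambda_k^{\Omega_x}|^{-1}$, this yields the $C([0,T];L^2)$ bound. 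For the $L^2(0,T;H^2)$ bound, Young's convolution inequality gives $\int_0^T |\cdots|^2 dt \leq \norm{h}_{L^2}^2/|\lambda_k^{\Omega_x}|^2$; multiplying by $k^4$ leaves a tail $\sum_k k^4|\lambda_k^{\Omega_x}|^{-2}\sim \sum_k k^{-4}<\infty$. The contribution of the initial datum is handled analogously, using $\int_0^T|e^{\lambda_k^{\Omega_x}t}v_{0,k}|^2 dt\leq |v_{0,k}|^2/(2|\lambda_k^{\Omega_x}|)$, and summing with weight $k^4\sim |\lambda_k^{\Omega_x}|$ gives $C\norm{v_0}_{L^2}^2$.

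Uniqueness is immediate from the decomposition: if $v_0=0$ and $h=0$, then $v_k\equiv 0$ for every $k$. Continuity in time of $t\mapsto v(t)\in L^2(\Omega_x)$ follows from the uniform convergence of the partial sums on $[0,T]$ guaranteed by the tail estimates above. I do not expect a real obstacle here: the only mildly delicate point is bookkeeping with the finitely many possibly nonnegative eigenvalues, which is absorbed into the multiplicative constant via the shift used in \Cref{stable matrix}.
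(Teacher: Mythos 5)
Your proof is correct, but it takes a different (and more hands-on) route than the paper, which in fact states \Cref{lm1} without proof, relying implicitly on the abstract framework of \Cref{sec:framework}. Note that your opening premise is slightly off: the semigroup/transposition machinery of \Cref{sec:framework} \emph{does} apply essentially verbatim, because in one space dimension $H^2(\Omega_x)\cap H_0^1(\Omega_x)\hookrightarrow C(\overline{\Omega_x})$, so $\delta_{x_0}\in \mathcal H'(\Omega_x)$ and $\|\delta_{x_0}h\|_{L^2(0,T;\mathcal H'(\Omega_x))}\leq C\|h\|_{L^2(0,T)}$; the 1-D analogue of \Cref{prop:wp_noncontr} with $f=\delta_{x_0}h$ then yields the lemma immediately. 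Your spectral argument instead diagonalizes $A$, solves the decoupled ODEs by Duhamel, and closes the estimates via Cauchy--Schwarz and Young's convolution inequality together with $|\lambda_k^{\Omega_x}|\sim k^4$, $\sum_k|\lambda_k^{\Omega_x}|^{-1}<\infty$ and the weight $k^4\sim|\lambda_k^{\Omega_x}|$ for the $L^2(0,T;H^2)$ bound; the shift absorbing the finitely many nonnegative eigenvalues (as in \Cref{stable matrix}) and the uniform-in-$t$ tail control for continuity are handled correctly. What your route buys is an explicit representation of the solution (which in fact feeds directly into the moment computations of \Cref{int_nullcontrol}); what the paper's route buys is brevity and uniformity with the general $N$-dimensional well-posedness theory. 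Two cosmetic points: the normalization of the eigenfunctions in the paper is $\sqrt2\sin(k\pi x/a)$, so the uniform bound on $|\Psi_k^{\Omega_x}(x_0)|$ is a constant depending on $a$ (irrelevant for the estimate), and the $H^2\cap H_0^1$ norm equivalence should be written with the weight $1+k^4$, which is comparable to $k^4$ since $k\geq1$.
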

%\subsection{Approximate controllability}
We start, as usual, our controllability study with the following approximate controllability result.
\begin{proposition}\label{ns1}
	The necessary and sufficient condition for the approximate controllability of \eqref{int_oned1} is 
	$x_0/a\in (0,1)\setminus \mathbb{Q}$ and $\nu\notin \mathcal{N}.$
\end{proposition}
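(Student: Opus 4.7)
The plan is to reduce approximate controllability of \eqref{int_oned1} to a unique continuation property for the adjoint system \eqref{adj_cyl}, and then analyze it explicitly via the eigenfunction expansion. By standard duality arguments (Hahn--Banach), approximate controllability is equivalent to the following implication: if $\phi_T \in L^2(\Omega_x)$ is such that the associated solution $\phi$ of \eqref{adj_cyl} satisfies $\phi(t,x_0)=0$ for all $t\in(0,T)$, then $\phi_T=0$. Writing $\phi_T = \sum_{k\geq 1} a_k \Psi_k^{\Omega_x}$ in the orthonormal basis $\{\Psi_k^{\Omega_x}\}$, the solution reads
\begin{equation*}
	\phi(t,x) \;=\; \sum_{k=1}^\infty a_k\, e^{\lambda_k^{\Omega_x}(T-t)}\,\Psi_k^{\Omega_x}(x),
\end{equation*}
so the observation at $x_0$ becomes
\begin{equation*}
	\phi(t,x_0) \;=\; \sqrt{2}\sum_{k=1}^\infty a_k\,\sin\!\Big(\tfrac{k\pi x_0}{a}\Big)\, e^{\lambda_k^{\Omega_x}(T-t)}.
\end{equation*}

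For the necessity part, I would argue by contradiction in two cases. If $x_0/a=p/q\in\mathbb{Q}$ with $\gcd(p,q)=1$, then choosing $\phi_T=\Psi_q^{\Omega_x}$ yields $\sin(q\pi x_0/a)=\sin(p\pi)=0$, so $\phi(t,x_0)\equiv 0$ while $\phi_T\neq 0$; hence approximate controllability fails. If instead $\nu\in\mathcal N$, then there exist $k\neq l$ with $\lambda_k^{\Omega_x}=\lambda_l^{\Omega_x}$ (this is exactly the obstruction identified in the proof of \Cref{aprx}); one can then build a nontrivial $\phi_T\in\mathrm{span}\{\Psi_k^{\Omega_x},\Psi_l^{\Omega_x}\}$ with $\phi(t,x_0)\equiv 0$, for instance by taking $\phi_T=\sin(l\pi x_0/a)\,\Psi_k^{\Omega_x}-\sin(k\pi x_0/a)\,\Psi_l^{\Omega_x}$ (and adjusting the construction if one of the sines vanishes, which only reinforces the failure).

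For the sufficiency, assume $\nu\notin\mathcal N$ and $x_0/a\in(0,1)\setminus\mathbb{Q}$, and suppose $\phi(t,x_0)=0$ on $(0,T)$. Since $t\mapsto\phi(t,x_0)$ is real-analytic on $(0,T]$, it extends as the identically zero series for all $t<T$, in particular for arbitrarily large values of $T-t$. Mimicking the argument in the proof of \Cref{aprx}, I would single out $k_0$ realizing $\max_k \lambda_k^{\Omega_x}$, which is attained at a unique index because $\nu\notin\mathcal N$ prevents any collision $\lambda_k^{\Omega_x}=\lambda_l^{\Omega_x}$ for $k\neq l$. Multiplying the identity by $e^{-\lambda_{k_0}^{\Omega_x}(T-t)}$ and letting $T-t\to+\infty$ forces $a_{k_0}\sin(k_0\pi x_0/a)=0$; the irrationality of $x_0/a$ gives $\sin(k_0\pi x_0/a)\neq 0$, hence $a_{k_0}=0$. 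Iterating over $k$ in order of decreasing eigenvalues, I peel off all the coefficients and conclude $\phi_T=0$.

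The main obstacle I anticipate is the rigorous justification of the iterative extraction step: one must verify that the series $\sum a_k e^{\lambda_k^{\Omega_x}\tau}\sin(k\pi x_0/a)$ converges in a sense that permits multiplication by $e^{-\lambda_{k_0}^{\Omega_x}\tau}$ and passage to the limit as $\tau\to+\infty$, term by term. Thanks to the fourth-order dissipation, the eigenvalues $\lambda_k^{\Omega_x}=-k^4\pi^4/a^4+(\nu-2\mu_j^{\Omega_y})k^2\pi^2/a^2$ decay like $-k^4$, so all but finitely many satisfy $\lambda_k^{\Omega_x}-\lambda_{k_0}^{\Omega_x}\leq -c k^4$ and the tail contributes a term which vanishes in the limit; the finitely many remaining indices can be handled by an induction that reorders the eigenvalues in decreasing order (which is a finite reordering since $\lambda_k^{\Omega_x}\to-\infty$). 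This reduces everything to the distinctness of the eigenvalues, which is exactly the content of $\nu\notin\mathcal N$, and the non-degeneracy of each observation coefficient, which is exactly the content of $x_0/a\notin\mathbb{Q}$.
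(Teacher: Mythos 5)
Your proposal is correct and follows essentially the same route as the paper: reduce to a unique continuation property, expand in the sine eigenbasis, kill coefficients by multiplying by the dominant exponential and letting the (extended) time variable go to infinity, with $\nu\notin\mathcal N$ guaranteeing simple eigenvalues and $x_0/a\notin\mathbb Q$ guaranteeing $\sin(k\pi x_0/a)\neq 0$, and with the same two explicit counterexamples for necessity. The only cosmetic difference is that you phrase unique continuation for the adjoint system \eqref{adj_cyl}, whereas the paper states it for the forward uncontrolled system with vanishing observation at $x_0$; these coincide because the spatial operator is self-adjoint.
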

\begin{proof}	Let us recall that approximate controllability of the system \eqref{int_oned1} is equivalent to the fact: the solution $v$ of the equation \eqref{int_oned1} with $v(\cdot,x_0)=0$
	for any $v_0\in L^2(\ox)$ is zero.
	
	\smallskip
	\noindent
	\textit{Necessary condition:} Let $x_0/a \in (0,1)\cap \mathbb{Q}.$ Thus $x_0/a=p/q$ for some natural numbers $p,q.$ If we take $v_0=\sin\left(\frac{k\pi x}{a} \right),$ then the expression of the solution of \eqref{int_oned1} is $v(t,x)=e^{\lambda^{\ox}_k t}\sin\left(\frac{k\pi x}{a} \right)$. Thus it is clear that $v(t,x_0)=0$ but $v$ is nonzero. Therefore necessity of $x_0/a \in (0,1)\cap \mathbb{Q}$ follows.
	
	Furthermore let $\nu \in \mathcal{N}.$ Also we take $x_0/a\in (0,1)\setminus \mathbb{Q}$. Then there exists some $k_0\neq l_0$ such that $\nu=2\mu^{\oy}_j+\pi^2\left(\frac{k_0^2+l_0^2}{a^2}\right).$ Therefore, we have
	\begin{align*}
		\lambda^{\ox}_{k_0}=-\frac{k_0^4\pi^4}{a^4}+\left(\nu-2\mu^{\oy}_j\right)\frac{k_0^2\pi^2}{a^2}=\frac{k_0^2l_0^2\pi^4}{a^4}=-\frac{l_0^4\pi^4}{a^4}+\left(\nu-2\mu^{\oy}_j\right) \frac{l_0^2\pi^2}{a^2}=\lambda^{\ox}_{l_0}.
	\end{align*}
	Next, observe that \begin{equation*}v(t,x)=e^{\lambda^{\ox}_{k_0}t}\sin\left(\frac{k_0\pi x}{a}\right)- \frac{\sin\left(\frac{k_0\pi x_0}{a}\right)}{\sin\left(\frac{l_0\pi x_0}{a}\right)} e^{\lambda^{\ox}_{k_0}t} \sin\left(\frac{l_0\pi x}{a}\right)
	\end{equation*}
	satisfies equation \eqref{int_oned1} without being identically zero. This violates the approximate controllability. Hence $\nu \notin \mathcal{N}$ is necessary.
	
		\smallskip
	\noindent
	\textit{Sufficient condition:}	Let $x_0/a\in (0,1)\setminus \mathbb{Q}$ and $\nu \notin \mathbb{N}.$ Take $v_0=\sum_{k=1}^{\infty}c_k \sin\left(\frac{k\pi x}{a}\right) .$ Then the solution $v$ is of the form $v(t,x)=\sum_{k=1}^{\infty}c_k e^{\lambda^{\ox}_k t} \sin\left(\frac{k\pi x}{a} \right).$ Assume $v(t,x_0)=0.$ Therefore we have $\sum_{k=1}^{\infty}c_k e^{\lambda^{\ox}_k t} \sin\left(\frac{k\pi x_0}{a} \right)=0.$ Following same argument as in the proof of \Cref{aprx} we conclude the proof.
\end{proof}
%\begin{remark}
%	For pointwise control case whether the condition $\nu\notin \mathbb{N}$ is not known.
%\end{remark}
\subsection{Null controllability}
Before going to state our controllability result, let us first recall the following minimal time: %for each $j\in \N$
\begin{align}\label{tj}T_0(x_0):=\limsup\limits_{k \to +\infty} \frac{-\log\left( |
		\sin\left(\frac{k\pi x_0}{a} \right)| \right)}{\frac{k^4\pi^4}{a^4}}.\end{align} Our goal is to proof the following:
\begin{theorem}\label{int_nullcontrol}
	Let us assume $T>0$ be given $\nu \notin \mathcal{N}$ and $x_0/a\in (0,1)\setminus \mathbb{Q}$. Recall the minimal time \eqref{T_0}.
	%\begin{itemize} 
	%\item 
	Then for every $v_0\in L^2(\Omega_x)$, there exists a control $h\in L^2(0,T)$ such that the system \eqref{int_oned1} satisfies $v(T)=0$ when $T>T_0(x_0)$, 
	%the control satisfies
%		\begin{equation}\label{cost1}
%			\norm{v}_{L^2(0,T)}\leq Ce^{\frac{C(T^j_0(x_0))}{T-T^j_0(x_0)}}\norm{v_0}_{L^2(\Omega_x)},
%		\end{equation} 
%		for constant $C$ which is independent of $T.$
	and	the system is not controllable when $T<T_0(x_0).$
		%	\item
		%\end{itemize}
	\end{theorem}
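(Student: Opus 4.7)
\medskip

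\noindent\textbf{Proof proposal for \Cref{int_nullcontrol}.}

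The plan is to reduce the problem to a moment problem on the eigenbasis $\{\Psi_k^{\Omega_x}\}_{k\in\mathbb N}$ of the operator $A$, as in the proof of \Cref{null control 1d_cyl}, the key new feature being that the Dirac control at $x_0$ brings the factor $\Psi_k^{\Omega_x}(x_0)=\sqrt 2\sin(k\pi x_0/a)$ into the moment equations. Testing the equation \eqref{int_oned1} against the adjoint solution $\phi(t,x)=e^{\lambda_k^{\Omega_x}(T-t)}\Psi_k^{\Omega_x}(x)$, imposing $v(T)=0$, and performing the change of variable $t\mapsto T-t$, one obtains the moment problem
\begin{equation*}
-e^{\lambda_k^{\Omega_x}T}\ip{v_0}{\Psi_k^{\Omega_x}}_{L^2(\Omega_x)}=\sqrt 2\sin\!\left(\tfrac{k\pi x_0}{a}\right)\int_0^T \tilde h(t)\,e^{\lambda_k^{\Omega_x}t}\,dt,\qquad k\in\mathbb N,
\end{equation*}
where $\tilde h(t):=h(T-t)$. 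Note that the condition $x_0/a\notin\mathbb Q$ is exactly what guarantees $\sin(k\pi x_0/a)\neq 0$ for every $k$, and $\nu\notin\mathcal N$ plays the same role as in the boundary case (simple eigenvalues, gap condition of \Cref{verification}).

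For the positive part $T>T_0(x_0)$, I would invoke the biorthogonal family $\{q_{k,T}\}$ produced by \Cref{biorthogonal} and \Cref{verification} (with $\theta=1/4$) and set
\begin{equation*}
\tilde h(t)=-\sum_{k=1}^{\infty}\frac{e^{\lambda_k^{\Omega_x}T}\ip{v_0}{\Psi_k^{\Omega_x}}_{L^2(\Omega_x)}}{\sqrt 2\,\sin(k\pi x_0/a)}\,q_{k,T}(t).
\end{equation*}
Fix $\varepsilon>0$ with $T>T_0(x_0)+2\varepsilon$. By definition of $T_0(x_0)$ in \eqref{tj}, there exists $k_0$ such that $|\sin(k\pi x_0/a)|^{-1}\le \exp\!\bigl((T_0(x_0)+\varepsilon)\tfrac{k^4\pi^4}{a^4}\bigr)$ for all $k\ge k_0$, while $e^{\lambda_k^{\Omega_x}T}\le e^{-T k^4\pi^4/a^4+O(k^2)}$ for large $k$, and $\|q_{k,T}\|_{L^2(0,T)}\le Ke^{Kk+KT^{-1/3}}$ from \eqref{conest1}. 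Combining these three estimates term-by-term, the general term of the series is bounded by $e^{-(T-T_0(x_0)-\varepsilon)k^4\pi^4/a^4+O(k)}|\ip{v_0}{\Psi_k^{\Omega_x}}|$, and Cauchy--Schwarz (together with Parseval for $v_0$) yields convergence of $\tilde h$ in $L^2(0,T)$. The finitely many low-frequency terms (including those where $\lambda_k^{\Omega_x}>0$) are handled by a shift $\lambda_k^{\Omega_x}\mapsto\lambda_k^{\Omega_x}-c_0$ exactly as in \Cref{verification} and \Cref{stable matrix}, producing a harmless $e^{c_0 T}$ factor.

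For the negative part $T<T_0(x_0)$, I would argue by duality, showing that the pointwise observability inequality
\begin{equation*}
\|\phi(0)\|_{L^2(\Omega_x)}^2 \le C\int_0^T |\phi(t,x_0)|^2\,dt
\end{equation*}
cannot hold. Testing it against terminal data $\phi_T=\Psi_k^{\Omega_x}$, the adjoint solution is $\phi(t,x)=e^{\lambda_k^{\Omega_x}(T-t)}\Psi_k^{\Omega_x}(x)$, so the inequality reduces (up to universal constants) to
\begin{equation*}
e^{2\lambda_k^{\Omega_x}T}\;\le\; C\,\frac{|\sin(k\pi x_0/a)|^2}{-\lambda_k^{\Omega_x}},
\end{equation*}
that is, after taking logarithms, $T\ge \frac{-\log|\sin(k\pi x_0/a)|}{k^4\pi^4/a^4}+O(k^{-4}\log k)$ for every large $k$. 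By the $\limsup$ definition of $T_0(x_0)$, if $T<T_0(x_0)$ one can extract a subsequence $k_n\to\infty$ along which this inequality is violated with a gap bounded below by a positive constant, which contradicts observability and hence rules out null controllability.

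The main obstacle is the positive-time construction: the factor $|\sin(k\pi x_0/a)|^{-1}$ can grow super-polynomially along subsequences, and one must verify that the quartic decay $e^{\lambda_k^{\Omega_x}T}$ genuinely dominates this growth for every admissible $k$ once $T>T_0(x_0)+\varepsilon$, not merely along the subsequence realizing the $\limsup$. This is exactly why the argument is packaged through the $\limsup$ in \eqref{tj}, which by definition controls $|\sin(k\pi x_0/a)|^{-1}$ uniformly in $k$ (for $k\ge k_0$), thereby making the series representation of $\tilde h$ absolutely convergent in $L^2(0,T)$.
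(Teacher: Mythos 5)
Your proposal is correct and follows essentially the same route as the paper: reduction to a moment problem carrying the factor $\sin(k\pi x_0/a)$ (nonzero by irrationality of $x_0/a$), a biorthogonal series for the control whose convergence for $T>T_0(x_0)$ is controlled by the $\limsup$ definition \eqref{tj}, and, for $T<T_0(x_0)$, a duality argument testing the pointwise observability inequality on eigenfunctions along a subsequence realizing the $\limsup$. The only (harmless) deviation is that you invoke the bound of \Cref{biorthogonal} and \Cref{verification} with $\theta=1/4$ (roughly $e^{Kk}$), while the paper uses the family of \Cref{biorthogonal2} with bound $K(\epsilon,T)e^{\epsilon(-\lambda_k^{\Omega_x})}$; both are absorbed by the quartic decay once $T>T_0(x_0)$, and your $O(k)$ bookkeeping should really be $O(k^2)$ because of the second-order term in $\lambda_k^{\Omega_x}$, which changes nothing.
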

	Recall the adjoint system \eqref{adj_cyl}. The following lemma gives an equivalent criterion for null controllability.
	\begin{lemma}\label{moment_pr1}
		Let $T>0$ and initial state $v_0\in L^2(\Omega_x)$ be given. Then the KS equation \eqref{int_oned1} is null controllable at time $T$ by using a control $h\in L^2(0,T)$ if and only if for all $\phi_T\in L^2(\Omega_x)$ the following identity holds
		\begin{equation}\label{equiv_id}
			\int_{0}^{T}\phi(t,x_0)h(t)dt=-\int_{0}^{1}v_0(x)\phi(0,x)dx,
		\end{equation}
	where $\phi$ is the solution of the adjoint system \eqref{adj_cyl}.
	\end{lemma}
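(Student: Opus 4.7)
The plan is to prove this equivalent characterization by a standard duality/transposition argument, computing the $L^2$ pairing of the solution $v$ of \eqref{int_oned1} against the solution $\phi$ of the backward adjoint system \eqref{adj_cyl}. First I would perform the formal computation assuming both data $\phi_T$ and control $h$ are smooth enough (say $\phi_T\in \mathcal D(A)$ and $h\in C^\infty_c(0,T)$), so that the corresponding solutions $v$ and $\phi$ are classical, and so that the pairing $\langle \delta_{x_0}h(t),\phi(t,\cdot)\rangle=h(t)\phi(t,x_0)$ makes pointwise sense. Multiplying the first equation of \eqref{int_oned1} by $\phi$, integrating over $(0,T)\times\Omega_x$, and integrating by parts in $t$ once and in $x$ twice for the $\partial_x^4$ and $\partial_x^2$ terms, all boundary contributions in $x$ vanish because both $v$ and $\phi$ satisfy the Navier-type conditions $v=\Delta v=\phi=\Delta \phi=0$ at $x=0,a$ (this is exactly the reason the adjoint problem was chosen with these homogeneous boundary conditions). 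The contribution of the right-hand side $\delta_{x_0}h(t)$ produces the term $\int_0^T h(t)\phi(t,x_0)\,dt$, while integration by parts in time yields the terminal pairing $\langle v(T),\phi_T\rangle_{L^2(\Omega_x)}-\langle v_0,\phi(0,\cdot)\rangle_{L^2(\Omega_x)}$.

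Combining these gives the identity
\begin{equation*}
\langle v(T),\phi_T\rangle_{L^2(\Omega_x)}-\langle v_0,\phi(0,\cdot)\rangle_{L^2(\Omega_x)}=\int_0^T h(t)\phi(t,x_0)\,dt.
\end{equation*}
The equivalence then follows immediately: if $v(T)=0$, the identity reduces to \eqref{equiv_id}; conversely, if \eqref{equiv_id} holds for every $\phi_T\in L^2(\Omega_x)$, choosing $\phi_T$ to run over a dense family in $L^2(\Omega_x)$ forces $v(T)=0$.

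The next step is to extend the identity by density from smooth $(\phi_T,h)$ to $\phi_T\in L^2(\Omega_x)$ and $h\in L^2(0,T)$. For this I would rely on \Cref{lm1}, which gives $v\in C([0,T];L^2(\Omega_x))\cap L^2(0,T;H^2\cap H^1_0)$, together with \Cref{lem:reg_backward1}, which provides the adjoint regularity $\phi\in C([0,T];L^2(\Omega_x))\cap L^2(0,T;\mathcal H(\Omega_x))$. Each side of the identity is then bilinear and continuous in $(\phi_T,h)\in L^2(\Omega_x)\times L^2(0,T)$: the terminal and initial pairings by the $C([0,T];L^2)$ bounds, and the integral $\int_0^T h(t)\phi(t,x_0)\,dt$ by the one-dimensional Sobolev embedding $H^2(\Omega_x)\hookrightarrow C(\overline{\Omega_x})$ applied to $\phi(t,\cdot)$, giving $|\phi(t,x_0)|\le C\|\phi(t,\cdot)\|_{H^2(\Omega_x)}$, hence $\phi(\cdot,x_0)\in L^2(0,T)$.

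The main (though minor) obstacle is precisely this last point: making rigorous sense of the distributional duality $\langle \delta_{x_0},\phi(t,\cdot)\rangle$ in a way consistent with the transposition definition of $v$. I would handle it by observing that, thanks to the $H^2$-in-space regularity of $\phi$ and the Sobolev embedding into $C^0$ in one spatial dimension, the trace $t\mapsto \phi(t,x_0)$ is a well-defined element of $L^2(0,T)$; this is exactly what the transposition identity for \eqref{int_oned1} with a point source requires, and it upgrades the formal computation above to one valid for arbitrary $\phi_T\in L^2(\Omega_x)$ and $h\in L^2(0,T)$, completing the proof.
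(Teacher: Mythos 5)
Your proposal is correct and follows essentially the same route the paper uses: the paper states this lemma without a separate proof, but its proof of the analogous boundary-control criterion (\Cref{lemma moment}) is exactly the same duality computation — integrate the equation against the adjoint solution for smooth data, observe all spatial boundary terms cancel under the Navier conditions, and conclude by density. Your additional remark that $\phi(\cdot,x_0)\in L^2(0,T)$ via the 1-D embedding $H^2(\Omega_x)\hookrightarrow C(\overline{\Omega_x})$ and \Cref{lem:reg_backward1} is precisely the justification needed for the point-evaluation term, and your sign convention matches \eqref{equiv_id}.
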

Our next task is to reduce the above identity into moment problem as we did in \Cref{null control 1d_cyl}. We will use the following biorthogonal result here:
\begin{proposition}\label{biorthogonal2}
	Let $\{\Lambda_k\}_{k\geq1}$ be a collection of positive real numbers satisfies the following conditions:
	\begin{itemize}
		\item $\Lambda$ satisfies the following gap condition: there exists $\rho>0$ such that:
		\begin{equation*}
			|\Lambda_k-\Lambda_l|\geq \rho|k-l|, \quad \forall k,l\geq 1.
		\end{equation*}
		\item $\sum_{k=1}^{\infty}\frac{1}{|\Lambda_k|}<C<\infty$
	\end{itemize}
	Then for any $T>0$ there exists a family $\{q_{k,T}\}_{k\geq 1}$ in $L^2(0,T)$ satisfying
	\begin{equation*}
		\int_{0}^{T}e^{-\Lambda_k t} q_{m,T}(t) dt =\delta_{k,m}, \quad \forall \lambda,\mu\in \Lambda
	\end{equation*} 
	with the following estimate: for all $\epsilon>0$ there exists $K(\epsilon,T)>0$ such that
	\begin{equation*}
		\norm{q_{k,T}}_{L^2(0,T)}\leq K(\epsilon,T)e^{\epsilon  \Lambda_k}, \quad  \forall k\geq 1.
	\end{equation*}
\end{proposition}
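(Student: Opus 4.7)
The plan is to employ the classical complex-analytic moment method originating from Fattorini--Russell \cite{Fattorini-Russell-1}: for each index $k$, construct an entire function $F_k$ of small exponential type satisfying $F_k(\Lambda_m)=\delta_{k,m}$ and $F_k|_{\mathbb R}\in L^2(\mathbb R)$, and then apply the Paley--Wiener theorem to produce $q_{k,T}\in L^2(0,T)$ whose Laplace transform is $F_k$. The biorthogonality relation $\int_{0}^{T} e^{-\Lambda_m t}q_{k,T}(t)\,dt=\delta_{k,m}$ will then follow by evaluating the Laplace transform at $\zeta=\Lambda_m$.

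The summability condition $\sum_{k\ge 1}1/\Lambda_k<\infty$ places $\{\Lambda_k\}$ in the genus-zero class of Hadamard's theorem, so the canonical Weierstrass product
\begin{equation*}
\Pi(z)=\prod_{k\ge 1}\left(1-\frac{z}{\Lambda_k}\right)
\end{equation*}
converges absolutely to an entire function. The gap condition $|\Lambda_k-\Lambda_l|\ge \rho|k-l|$ is then used to derive a quantitative lower bound on $|\Pi'(\Lambda_k)|$ by isolating the $k$-th factor and comparing the remaining product $\prod_{l\ne k}(1-\Lambda_k/\Lambda_l)$ with a convergent sum driven by the uniform gap; this gives a lower bound with at most polynomial loss in $\Lambda_k$.

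The heart of the argument is the construction, for each $\epsilon>0$, of an auxiliary entire multiplier $M_{\epsilon}$ of exponential type at most $\epsilon$ such that $M_{\epsilon}|_{\mathbb R}\in L^2(\mathbb R)$ and $|M_{\epsilon}(\Lambda_k)|\ge c_{\epsilon}>0$ uniformly in $k$. The hypothesis $\sum 1/\Lambda_k<\infty$ is essential here because it forces the counting density of $\{\Lambda_k\}$ to vanish, which permits $M_{\epsilon}$ to be built (via suitable sine-products whose zeros avoid the $\Lambda_k$, in the spirit of Schwartz and Paley--Wiener) with arbitrarily small exponential type. Combining the two ingredients, set
\begin{equation*}
F_k(z)=\frac{\Pi(z)\,M_{\epsilon}(z)}{\Pi'(\Lambda_k)\,M_{\epsilon}(\Lambda_k)\,(z-\Lambda_k)},
\end{equation*}
which is entire of exponential type $\le \epsilon$, vanishes at every $\Lambda_l$ with $l\ne k$, and equals $1$ at $\Lambda_k$.

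Finally, the Paley--Wiener theorem provides $q_{k,T}\in L^2(\mathbb R)$ supported in $[0,\epsilon]\subset [0,T]$ (after translating if necessary, and otherwise extending by zero on $(\epsilon,T)$) whose Laplace transform equals $F_k$. Plancherel's identity gives $\|q_{k,T}\|_{L^2(0,T)}\simeq \|F_k\|_{L^2(\mathbb R)}$, and tracking the lower bounds on $|\Pi'(\Lambda_k)|$ and $|M_{\epsilon}(\Lambda_k)|$ through the explicit form of $F_k$ yields the prescribed estimate $\|q_{k,T}\|_{L^2(0,T)}\le K(\epsilon,T)\,e^{\epsilon\Lambda_k}$, the factor $e^{\epsilon\Lambda_k}$ absorbing the growth of $\Pi$ at the node and the cost of the small-type multiplier. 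The principal obstacle in executing this plan is the construction of $M_{\epsilon}$ with \emph{arbitrarily small} exponential type: it relies on delicate quantitative results in the theory of entire functions of exponential type (of Levinson--Beurling--Malliavin flavour) that are applicable precisely because of the summability assumption, and it requires careful bookkeeping of the constants to isolate the correct $\epsilon$-dependence in the final norm bound.
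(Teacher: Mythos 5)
Your outline follows the classical Fattorini--Russell complex-analytic route (canonical product $\Pi$, small-type multiplier $M_\epsilon$, Paley--Wiener), which is indeed the strategy behind results of this kind; note, however, that the paper does not reprove the proposition at all --- it simply invokes \cite[Theorem 1.2]{AKBBT11}, so the entire analytic content is outsourced to that reference. Measured as a proof, your proposal has a genuine gap exactly at its central step: the existence of the multiplier $M_\epsilon$. You need an entire function of exponential type at most $\epsilon$ (in fact at most $\min(\epsilon,T)$, so that the Paley--Wiener inverse transform is supported in $[0,T]$), bounded below uniformly at the nodes $\Lambda_k$, and decaying on the real axis fast enough to dominate the growth of $\Pi$ there; merely requiring $M_\epsilon|_{\mathbb R}\in L^2(\mathbb R)$ is not sufficient, since $\Pi(-x)=\prod_{k}\bigl(1+x/\Lambda_k\bigr)\ge 1$ grows subexponentially on the negative axis and the product $\Pi M_\epsilon$ must still lie in $L^2(\mathbb R)$. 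Producing such a multiplier with \emph{arbitrarily small} type under only the hypotheses $\sum_k 1/\Lambda_k<\infty$ and the gap condition is precisely the delicate quantitative construction that constitutes the cited theorem; appealing to ``results of Levinson--Beurling--Malliavin flavour'' without stating and verifying a precise applicable statement means the hard part of the proposition is assumed rather than proved.

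Two further points need repair even within your scheme. First, the claimed lower bound $|\Pi'(\Lambda_k)|\gtrsim \Lambda_k^{-C}$ (``at most polynomial loss'') is not justified by the stated hypotheses; with only the gap $|\Lambda_k-\Lambda_l|\ge\rho|k-l|$ and summability one should expect, and in fact only needs, a bound of the form $|\Pi'(\Lambda_k)|^{-1}\le K(\epsilon)e^{\epsilon\Lambda_k}$ for every $\epsilon>0$, and this has to be derived carefully from the counting-function consequences of $\sum_k 1/\Lambda_k<\infty$. Second, the support bookkeeping ``supported in $[0,\epsilon]\subset[0,T]$'' fails when $\epsilon>T$; one must work with type $\le \min(\epsilon,T)$ and let the $T$-dependence enter the constant $K(\epsilon,T)$, which is where the blow-up of the bound for small $T$ comes from. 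To make the argument complete you should either carry out the multiplier construction and these estimates quantitatively, or do as the paper does and quote \cite[Theorem 1.2]{AKBBT11} directly.
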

\begin{proof}
	The proof can be found in \cite[Theorem 1.2]{AKBBT11}.
\end{proof}
\begin{lemma}\label{verification1}
	Consider $\nu \notin \mathcal N$.
	Then the collection of the eigenvalues is ${\Lambda_k}=\{-\lambda_k^{\ox}, k\in \N\}$ given by \eqref{eigenv} verifies the conditions of \Cref{biorthogonal2}.
\end{lemma}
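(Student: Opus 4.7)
The plan is to check the two hypotheses of \Cref{biorthogonal2} for $\Lambda_k := -\lambda_k^{\Omega_x} = \tfrac{k^4\pi^4}{a^4} + (2\mu_j^{\Omega_y}-\nu)\tfrac{k^2\pi^2}{a^2}$, following the same bookkeeping already carried out in \Cref{verification}. First I would settle positivity exactly as there: for $j\geq n_0$ the coefficient $(2\mu_j^{\Omega_y}-\nu)$ is nonnegative and $\Lambda_k>0$ automatically; for $1\leq j<n_0$ only finitely many indices $k$ can produce $\Lambda_k\leq 0$, and I replace $\Lambda_k$ by $\Lambda_k+c_0$ for some $c_0>0$, which only introduces a harmless multiplicative factor $e^{c_0T}$ in the cost estimate of the associated biorthogonal family.

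Next, for the gap condition, the basic identity is the factorization
\begin{equation*}
\Lambda_k-\Lambda_l=\frac{\pi^2(k-l)(k+l)}{a^2}\left[\frac{\pi^2(k^2+l^2)}{a^2}+(2\mu_j^{\Omega_y}-\nu)\right],\qquad k>l\geq1.
\end{equation*}
The explicit factor $(k-l)$ is already what we need, so the question is whether
$\displaystyle (k+l)\Big|\tfrac{\pi^2(k^2+l^2)}{a^2}+(2\mu_j^{\Omega_y}-\nu)\Big|\geq \rho$
uniformly. For $k+l$ large the bracket grows like $\pi^2(k^2+l^2)/a^2$, giving a lower bound that blows up. For the finitely many remaining pairs with $k+l\leq K$ and $k\neq l$, the hypothesis $\nu\notin\mathcal{N}$ is precisely the statement that the bracket never vanishes (this is where the set $\mathcal{N}$ defined in \eqref{critical value_cyl1} enters), so I take $\rho$ to be the minimum over this finite exceptional set and the uniform lower bound from the tail. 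This yields $|\Lambda_k-\Lambda_l|\geq \rho|k-l|$ as required.

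Finally, the Blaschke-type condition $\sum_k 1/\Lambda_k<\infty$ is immediate: since $\Lambda_k\sim k^4\pi^4/a^4$ as $k\to\infty$ (and in particular $\Lambda_k\geq C k^4$ for $k$ large enough, with a finite number of smaller terms controlled after the shift by $c_0$), the tail is dominated by a convergent $p$-series with $p=4$.

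The only subtle point is the uniformity of $\rho$ with respect to $j$, which I do not need: \Cref{biorthogonal2} is invoked with $j$ fixed, so it suffices that $\rho$ depends on $j$. The delicate step is therefore the finite-set argument in the gap estimate, and its success hinges entirely on the precise critical-set exclusion $\nu\notin\mathcal{N}$, exactly as in the boundary-control analysis.
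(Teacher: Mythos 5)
Your proof is correct and follows essentially the route the paper intends: the paper's own proof of this lemma is a one-line reference back to \Cref{verification}, whereas you explicitly verify the two hypotheses of \Cref{biorthogonal2}, which are not literally the ones checked there (a gap of the form $\rho|k-l|$ and the summability $\sum_k 1/\Lambda_k<\infty$, rather than a constant gap plus a counting-function bound). Your factorization $\Lambda_k-\Lambda_l=\frac{\pi^2(k-l)(k+l)}{a^2}\bigl[\frac{\pi^2(k^2+l^2)}{a^2}+(2\mu_j^{\oy}-\nu)\bigr]$, the splitting into a tail where the bracket grows like $k^2+l^2$ and a finite exceptional set where $\nu\notin\mathcal N$ guarantees non-vanishing, and the $k^4$ growth giving the Blaschke-type condition are exactly the details the paper leaves implicit, so your write-up is in fact more complete than the printed proof, including the same treatment of positivity via the shift by $c_0$ for $1\le j<n_0$. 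One small remark: although for the lemma itself a $j$-dependent $\rho$ suffices (as you note, \Cref{biorthogonal2} is applied at fixed $j$), the later use of the resulting control cost in the proof of \Cref{thm_int} requires constants uniform in $j$; your argument delivers this as well, since for $j\ge n_0$ the bracket is bounded below by $\pi^2(k^2+l^2)/a^2$ independently of $j$, and only finitely many indices $j<n_0$ remain.
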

\begin{proof}
	Same as the proof of \Cref{verification}.
\end{proof}
Now we are in position to prove \Cref{int_nullcontrol}.
	\subsection{Proof of \Cref{int_nullcontrol}}
	Let us fix $j$ arbitrarily. We first prove that system \eqref{int_oned1} is null controllable for all $T>T_0(x_0).$ 	Thanks to \eqref{moment_pr1} and proceeding with similarly as \Cref{null control 1d_cyl}, we can say that the system \eqref{int_oned1} is null controllable if and only if the following identity holds for all $k\in \N.$
	\begin{equation}\label{moment_prb}
		\int_{0}^{T}e^{\lambda^{\ox}_k t}\tilde h(t)dt=-\frac{\sqrt{2}e^{\lambda^{\ox}_k T}}{\sin(\frac{k\pi x_0}{a} )}\int_{0}^{a}v_0(x)\sin\left(\frac{k\pi x}{a} \right)dx,
	\end{equation}
	where $\tilde h(t)=h(T-t).$ Next, we consider the control in the following form
	\begin{equation}\label{exp con1}
		\tilde h(t)=\frac{1}{\sqrt{2}}\sum_{k=1}^{\infty}	\frac{e^{\lambda^{\ox}_k T}}{\sin\left(\frac{k\pi x_0}{a} \right)}\ip{v_0}{\sin\left(\frac{k\pi x}{a} \right)}_{L^2(\Omega_x)}q_{k,T}(t),
	\end{equation}
	where $q_{k,T}$ satisfies the following for all $\epsilon>0$
	\begin{align}
		\label{conest2}&	\norm{q_{k,T}}_{L^2(0,T)}\leq K(\epsilon,T)e^{\epsilon(-\lambda^{\ox}_k)}, \quad \forall k \in \N, j \geq n_0 ,\\
	\label{conest3}	&\norm{q_{k,T}}_{L^2(0,T)}\leq K_1(\epsilon,T) e^{\epsilon (-\lambda^{\ox}_k+c_0) }, \quad \forall k \in \N,\, \forall j < n_0.
	\end{align}
%	is same as \eqref{biorth} with the estimate \eqref{conest1}.
	%with the following estimate
	%\begin{align}
	%	&	\norm{q_{k,T}}_{L^2(0,T)}\leq Ke^{K(-\lambda^{\ox}_k)^{1/4}+KT^{-\frac{1/4}{1-1/4}}}, \forall k \in \N. %\geq n_0 ,
	%	%&\norm{q_{k,T}}_{L^2(0,T)}\leq K_1 e^{K_1T^{-\frac{1/4}{1-1/4}}}, \forall k < n_0
	%\end{align}
	To obtain the lower bound of the observation term, we use the definition of $T_0(x_0)$ above (see \eqref{tj}). Note that for every $\epsilon>0$, we can write
	\begin{equation*}
		\frac{1}{\left|{\sin\left(\frac{k\pi x_0}{a}\right)}\right|}\leq Ce^{\frac{k^4\pi^4}{a^4}(T_0(x_0)+\epsilon)},\ \ k\geq 1.
	\end{equation*}
%	Here, we have used the fact that, when $j\geq n_0,$ (see \eqref{n not_cyl})  $-\lambda^{\ox}_k>0$ for all $ k\geq 1.$ Otherwise ( when $1<j<n_0$), there always exist $k_0$ such that $-\lambda^{\ox}_k>0$ for all $ k>k_0.$ The bound for $1\leq k\leq k_0,$ can be adjusted through constant the $C>0$.
	Combining \eqref{exp con1}, \eqref{conest2}, \eqref{conest3} and the above estimate, we have
	\begin{align}\label{int_con_est}
	\notag	\|\tilde h\|_{L^2(0,T)}&\leq C
		  \left( \sum_{k=1}^{\infty}e^{-\epsilon \lambda^{\ox}_k T} e^{-\frac{k^4\pi^4}{a^4}(T-T_0(x_0)-\epsilon) } e^{-\left(2\mu^{\oy}_j-\nu\right)\frac{k^2\pi^2}{a^2}T}\right)\norm{v_0}_{L^2(\Omega_x)}\\
		  &\leq C
		  \left( \sum_{k=1}^{\infty} e^{-\frac{k^4\pi^4}{a^4}(T-T_0(x_0)-2\epsilon) } e^{(-1+\epsilon)\left(2\mu^{\oy}_j-\nu\right)\frac{k^2\pi^2}{a^2}T}\right)\norm{v_0}_{L^2(\Omega_x)}.
		%\\&\sum_{k\geq1}e^{-\frac{k^2\pi^2}{a^2}(T-T^j_0(x_0)-\epsilon)+c_0k}(-1+\epsilon)
	\end{align}
	We choose $0<\epsilon<\min\{1, \frac{T-T_0(x_0)}{4}\}>0$ as $T>T_0(x_0).$
	Thus if $j\geq n_0,$ we have $(2\mu^{\oy}_j-\nu)>0.$ Then we have from above
		\begin{align*}
			\|\tilde h\|_{L^2(0,T)}&\leq C
				 \sum_{k=1}^{\infty} e^{-\frac{k^4\pi^4}{a^4}\frac{T-T_0(x_0)}{2} } \norm{v_0}_{L^2(\Omega_x)}\leq C \norm{v_0}_{L^2(\Omega_x)},
			%\\&\sum_{k\geq1}e^{-\frac{k^2\pi^2}{a^2}(T-T^j_0(x_0)-\epsilon)+c_0k}(-1+\epsilon)
		\end{align*}
	for some constant $C>0$, independent of $j.$
	Now if $1\leq j<n_0,$ we have $(2\mu^{\oy}_j-\nu)<0.$ Thus similar approach as before leads to
	\begin{align}
		\|\tilde h\|_{L^2(0,T)}&\leq C
		\sum_{k=1}^{\infty} e^{-\frac{k^4\pi^4}{a^4}C_1+C_2 \frac{k^2\pi^2}{a^2} } \norm{v_0}_{L^2(\Omega_x)}\leq C \norm{v_0}_{L^2(\Omega_x)},
		%\\&\sum_{k\geq1}e^{-\frac{k^2\pi^2}{a^2}(T-T^j_0(x_0)-\epsilon)+c_0k}(-1+\epsilon)
	\end{align}
for some constant $C>0$ independent of $j.$

For the negative result, first fix $j\in \N$ arbitrarily. If possible let us assume that the system \eqref{int_oned1} is null-controllable when $T<T_0(x_0)$. Thus, the corresponding adjoint system \eqref{adj_cyl} satisfies the following observability inequality
	\begin{equation*}
		\norm{\phi(0,\cdot)}^2_{L^2(\Omega_x)}\leq C\int_{0}^{T}|\phi(t,x_0)|^2 dt,
	\end{equation*}for some $C>0.$ Consider the terminal data $\phi_T=\sin\left(\frac{k\pi x}{a}\right).$ Using expression \eqref{eq:sigma m_cyl} of the solution of adjoint system \eqref{adj_cyl}, the above observability inequality reduces to
	\begin{align}\label{int_est1}
		e^{2\lambda^{\ox}_k T}\leq C \int_{0}^{T} e^{2\lambda^{\ox}_k(T-t)}\sin^2\left(\frac{k\pi x_0}{a}\right)\leq C \sin^2\left(\frac{k\pi x_0}{a}\right),
	\end{align}for some $C>0.$ 
	Next, from the definition \eqref{tj} of $T_0(x_0)$, we have an increasing unbounded subsequence $k_n$ such that
	\begin{align*}
		T_0(x_0) = \lim\limits_{n \to +\infty} \frac{-\log\left( |
			\sin\left(\frac{k_n\pi x_0}{a} \right)| \right)}{\frac{k_n^4\pi^4}{a^4}}.
	\end{align*} 
	If we assume $	T_0(x_0)<\infty$, for every $\epsilon>0$, there exists natural number $n_{\epsilon}$, such that the following holds
	\begin{align*}
		T_0(x_0)-\epsilon \leq  \frac{-\log\left( |
			\sin\left(\frac{k_n\pi x_0}{a} \right)| \right)}{\frac{k_n^4\pi^4}{a^4}} \quad \forall n\geq n_{\epsilon}.
	\end{align*} 
	Therefore using \eqref{int_est1} and the above estimate, we have $\forall n\geq n_{\epsilon}$
	\begin{align}\label{int_est2}
		\left|
		\sin\left(\frac{k_n\pi x_0}{a} \right)\right|e^{\left(T_0(x_0)-\epsilon \right)\frac{k_n^4\pi^4}{a^4}}\leq 1\leq C \left|\sin\left(\frac{k_n\pi x_0}{a} \right)\right|e^{-\lambda^{\ox}_{k_n} T}.
	\end{align}
 Choosing $\epsilon=\frac{T_0(x_0)-T}{2}>0,$ there is $\tilde n\in \N$ such that for all $n\geq \tilde n,$ $C e^{-\left(\frac{T_0(x_0)-T}{2} \right)\frac{k_n^4\pi^4}{a^4}}e^{\left(2\mu^{\oy}_j-\nu\right)\frac{k^2\pi^2}{a^2} T}\geq 1.$ Further simplifying we have there exist positive constants $C_1, C_2$ such that $C e^{-C_1 k_n^4}e^{C_2\mu^{\oy}_j k_n^2 }\geq 1.$
	Next, for each $j\in \N,$ choose large enough $n_j$  such that $\forall n\geq n_j $ we have $-C_1 k_n^4+C_2\mu^{\oy}_j k_n^2 \leq -C_4 k_n^2.$ Therefore when $n>\max\{n_j, \tilde n\},$ we have $C e^{-C_4 k_n^2}\geq 1$, which yields a contradiction. The proof of \Cref{int_nullcontrol} is finished. \qed

	\begin{theorem}\label{int_thm_on}
		Let us assume $T>0$ be given $\nu \notin \mathcal{N}$ and $x_0/a\in (0,1)\setminus \mathbb{Q}$. Let $x_0/a$ be an algebraic real number of order $d>1$. Then for every $v_0\in L^2(\Omega_x)$, there exists a control $h\in L^2(0,T)$ such that the system \eqref{int_oned1} satisfies $v(T)=0$ for any $T>0$ and the control satisfies
		\begin{equation}\label{c1}
			\norm{h}_{L^2(0,T)}\leq Ce^{\frac{C j^{\frac{1}{(N-1)}}}{T}}\norm{v_0}_{L^2(\Omega_x)}.
		\end{equation} 
	\end{theorem}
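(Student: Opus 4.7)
The plan is to adapt the moment method proof of \Cref{int_nullcontrol} by replacing the critical time analysis with a Liouville-type polynomial lower bound on $|\sin(k\pi x_0/a)|$ available when $x_0/a$ is algebraic of order $d>1$. This will both remove the minimal time obstruction and yield the sharper control cost \eqref{c1}, provided we employ the biorthogonal family from \Cref{biorthogonal} rather than the weaker one from \Cref{biorthogonal2}.

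The key arithmetic input is Liouville's inequality: for every real algebraic number $\alpha$ of degree $d\geq 2$, there exists $c_L=c_L(\alpha)>0$ such that $|\alpha - p/k|\geq c_L/k^{d}$ for every $p\in\mathbb Z$ and $k\in \N$. Applying this to $\alpha=x_0/a$, choosing $p$ to be the nearest integer to $kx_0/a$, and using $|\sin(\pi s)|\geq 2\,\dist(s,\mathbb Z)$, we deduce
\begin{equation*}
\left| \sin\left( \frac{k\pi x_0}{a} \right) \right| \geq \frac{C}{k^{d-1}}, \quad \forall k\in\N.
\end{equation*}
Thus $1/|\sin(k\pi x_0/a)|$ grows only polynomially in $k$, which is the crucial feature that eliminates any minimal time.

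By \Cref{moment_pr1} and the reduction performed in the proof of \Cref{int_nullcontrol}, null controllability of \eqref{int_oned1} is equivalent to solving the moment problem \eqref{moment_prb}. I would construct the control by
\begin{equation*}
\tilde h(t) = \frac{1}{\sqrt{2}} \sum_{k=1}^\infty \frac{e^{\lambda_k^{\Omega_x} T}}{\sin(k\pi x_0/a)} \left\langle v_0, \sin\left(\tfrac{k\pi x}{a}\right) \right\rangle_{L^2(\Omega_x)} q_{k,T}(t),
\end{equation*}
where $\{q_{k,T}\}_{k\in\N}$ is the biorthogonal family provided by \Cref{biorthogonal} and \Cref{verification} (already used in the proof of \Cref{null control 1d_cyl}), whose cost satisfies $\|q_{k,T}\|_{L^2(0,T)}\leq Ke^{K(-\lambda_k^{\Omega_x})^{1/4}+KT^{-1/3}}$. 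Applying Cauchy--Schwarz in $k$ and the polynomial bound on $1/|\sin(k\pi x_0/a)|$,
\begin{equation*}
\|\tilde h\|_{L^2(0,T)}^2 \leq C \|v_0\|_{L^2(\Omega_x)}^2 \sum_{k\geq 1} k^{2(d-1)} e^{2K(-\lambda_k^{\Omega_x})^{1/4}} e^{2\lambda_k^{\Omega_x} T}.
\end{equation*}
Since $\lambda_k^{\Omega_x}\sim -k^4\pi^4/a^4$ at infinity, the quartic decay absorbs both the polynomial factor $k^{2(d-1)}$ and the $e^{Ck}$ biorthogonal growth; the $j$-dependent cross term $e^{-(2\mu_j^{\Omega_y}-\nu)k^2\pi^2 T/a^2}$ is handled exactly as at the end of the proof of \Cref{null control 1d_cyl} via Young's inequality, producing the factor $e^{Cj^{1/(N-1)}/T}$ and a summable remainder, yielding \eqref{c1}.

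The substantive new point is the Liouville lower bound; once in hand, the proof amounts to checking that inserting the polynomial factor $k^{d-1}$ into the convergent series from \Cref{null control 1d_cyl} does not spoil the estimate, which is immediate from the quartic decay of $\lambda_k^{\Omega_x}$. The one subtlety to watch is to use the $\theta=1/4$ biorthogonal family of \Cref{biorthogonal} rather than the $\epsilon$-dependent one of \Cref{biorthogonal2}, since only the former produces the desired explicit $e^{Cj^{1/(N-1)}/T}$ dependence of the control cost.
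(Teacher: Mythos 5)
Your proposal is correct and follows essentially the same route as the paper: the moment problem \eqref{moment_prb} is solved with the explicit-cost biorthogonal family of \Cref{biorthogonal} (as in \Cref{null control 1d_cyl}), and the factor $1/|\sin(k\pi x_0/a)|$ is controlled through the Liouville bound, with Young's inequality producing the $e^{Cj^{1/(N-1)}/T}$ factor. The only cosmetic difference is that you derive the polynomial lower bound $|\sin(k\pi x_0/a)|\geq C/k^{d-1}$ directly from Liouville's inequality and absorb it into the convergent series, whereas the paper invokes the equivalent summation estimate $\sum_k e^{-\alpha k^2 T}/|\sin(k\pi x_0/a)|\leq Ce^{C/T}$ from \cite[Proposition 3.1]{S15}.
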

	\begin{proof}
		The proof follows from the fact that, when $x_0/a$ be an algebraic real number of order $d>1$, using Liouville's Theorem from Diophantine approximation one can prove that $T_0(x_0)=0.$ Furthermore the control cost estimate follows from the proof of \Cref{null control 1d_cyl} with a slight modification as done in \cite{S15}. Indeed,
		\begin{align*}
			\norm{\tilde h}_{L^2(0,T)}&\leq Ce^{\frac{C}{T^{1/3}}}  \left(e^{CT}  +\sum_{k>p_0}^{\infty}e^{C k}
			 e^{C_1 j^{\frac{1}{2(N-1)}}\sqrt{k}} e^{-C k^4 T}\right)\norm{u_0}_{L^2(\Omega_x)}\\
			&\leq C e^{\frac{C}{T^{1/3}}}\left(e^{CT}+e^{\frac{C j^{\frac{1}{(N-1)}}}{T}} e^{\frac{C^2}{T}}  \sum_{k=k_0}^{\infty}\frac{1}{\left|{\sin\left(\frac{k\pi x_0}{a}\right)}\right|} e^{C(k+k^2- k^4) T }\right)\norm{v_0}_{L^2(\Omega_x)}\\
			&\leq C
			e^{\frac{C}{T^{1/3}}} e^{\frac{C j^{\frac{1}{(N-1)}}}{T}}  \left(e^{CT}+ e^{\frac{C^2}{T}} \sum_{k=m_0}^{\infty}\frac{e^{-C_3 k^2 T }}{\left|{\sin\left(\frac{k\pi x_0}{a}\right)}\right|} \right)\norm{v_0}_{L^2(\Omega_x)}.
			%\\&\sum_{k\geq1}e^{-\frac{k^2\pi^2}{a^2}(T-T^j_0(x_0)-\epsilon)+c_0k}
		\end{align*}
		
		\begin{lemma}\cite[Proposition 3.1]{S15}
			Let $x_0/a$ be an algebraic real number of order $d>1$, then for all $\alpha>0$, there exists a constant $C(\alpha,d)>0$ such that
			\begin{align*}
				\sum_{k=1}^{\infty}\frac{e^{-\alpha k^2 T }}{\left|{\sin\left(\frac{k\pi x_0}{a}\right)}\right|}\leq Ce^{\frac{C}{T}}.
			\end{align*}
		\end{lemma}
		Using the above result along with same argument we have
		\begin{align*}
			\norm{\tilde h}_{L^2(0,T)}&\leq C
			e^{\frac{C}{T^{1/3}}}e^{\frac{C j^{\frac{1}{(N-1)}}}{T}}\left(e^{CT}+ e^{\frac{C}{T}}  \right)\norm{v_0}_{L^2(\Omega_x)}.
		\end{align*}
		From here we can conclude the required control cost as in the proof of \Cref{null control 1d_cyl}.
		Indeed, first assume $T<1$ without loss of generality.
			And as usual taking zero extension of the control when $T>1$ we can  prove the result with the similar control cost. 
		\end{proof}
	\begin{remark}\label{nec_int}
		\Cref{ns1}, together with an argument similar to that in \Cref{necessary}, shows that the conditions appearing in \Cref{int_nullcontrol}, namely, $\nu \notin \mathcal{N}$ and $x_0/a \in (0,1) \setminus \mathbb{Q}$, are also necessary for null controllability at any time $T > T_0(x_0)$.
	\end{remark}
\subsection{Proof of \Cref{thm_int}} We split the proof into two parts.
\begin{itemize}
	\item[1.]Take $\omega=\Omega_y$ and let $T>T_0(x_0)$. We will prove the following observability inequality 
	\begin{equation*}
		\norm{\sigma_T}^2_{L^2(\Omega)}\leq C\int_{0}^{T}\norm{\sigma(t,x_0,\cdot)}^2_{L^2(\Omega_y)} dt
	\end{equation*}
	for the adjoint system \eqref{lin_NKS_adj}.
	 Let us first assume that $\sigma_T=\sum\limits_{j=1}^{\infty}\sigma^{j}_T(x)\Psi^{\oy}_j(y)$ for some $\sigma^j_T\in L^2(\ox).$ 
	Thus if we write 
		$\sigma(t,x,y)=\sum\limits_{j=1}^{\infty}\sigma^j(t,x)\Psi^{\oy}_j(y),$ $\sigma^j$ satisfies the corresponding one-dimensional adjoint system \eqref{adj2}. %see \Cref{par obs_cyl}.
		Thanks to \Cref{int_nullcontrol}, we infer that the system \eqref{int_oned1} is null controllable in time $T>T_0(x_0)$.
		%with a uniform control cost \eqref{cost1}. 
		Using this fact along with a similar argument as \Cref{stable matrix}, one can derive that
		 the following perturbed system of \eqref{int_oned1}
		\begin{equation}\label{KS-oned 1_cyl_int}
			\begin{cases}
				\partial_t \tl v+\partial_{x}^4 \tl v+\left(\nu-2\mu^{\oy}_j\right)\partial_{x}^2 \tl v+\left((\mu^{\oy}_j)^2-\nu\mu^{\oy}_j\right)\tilde v=\delta_{x_0}h(t) & \quad t\in (0,T), \;\; x\in \Omega_x,\\
				\tl	v(t,0)= \tl v(t,1)=0, \quad  \partial_{x}^2 \tl v(t,0)=\partial_{x}^2 \tl v(t,1)=0& \quad t\in (0,T), \\
				\tl v(0,x)=v_0(x) & \quad x\in \Omega_x.
			\end{cases}
		\end{equation}is also null controllable with a control cost of the form
	\begin{equation}\label{c2}
		\norm{h}_{L^2(0,T)}\leq C_T \norm{v_0}_{L^2(\Omega_x)},
	\end{equation}
where $C_T$ is a positive constant independent of $j.$ 
	%see \Cref{stable matrix} for details.
	 Therefore the corresponding adjoint system \eqref{adj2} satisfies the observability inequality
	\begin{equation}\label{obs1_cyl1}
		\norm{\sigma^j(0)}^2_{L^2(\ox)}\leq (C_{T})^2\int_{0}^{T}|\sigma^j(t,x_0)|^2 dt.
	\end{equation}
%where $C_T= Ce^{CT}e^{\frac{C T_0(x_0)}{T-T_0(x_0)}}.$

Let us now compute the $L^2(\Omega)$ norm of $\sigma(0)$ where $\sigma$ is defined above. More precisely, we have
\begin{align*}
	\norm{\sigma(0)}^2_{L^2(\Omega)}=\sum_{j=1}^{\infty}\norm{\sigma^j(0)}^2_{L^2(\ox)}\leq C_T^2\sum_{j=1}^{\infty}\int_{0}^{T}|\sigma^j(t,x_0)|^2 dt\leq C_T^2\int_{0}^{T}\|\sigma(t,x_0)\|_{L^2(\Omega_y)}^2 dt.
\end{align*}
This shows that equation \eqref{int} is null controllable in time $T>T_0(x_0).$

Next, note that if the $N$-dimensional system \eqref{int} is null controllable at time $T$, then the one-dimensional system \eqref{int_oned1} is also null controllable at time $T$. This can be proved using a Fourier decomposition in the direction of $\Omega_y$. Thus as \eqref{int_oned1} is not null-controllable for $T<T_0(x_0),$ the same holds for \eqref{int}. 
This Fourier decomposition argument and \Cref{nec_int} imply that conditions ($\nu \notin \mathcal{N}$ and $x_0/a\in (0,1)\setminus \mathbb{Q}$) are also necessary to have null controllability for \eqref{int} in $T>T_0(x_0)$. 

\item[2.] Take, $\omega\subsetneq \Omega_y.$ The proof is similar to  
the one of \Cref{Ln_KSE}. For brevity, we skip the details. 

\end{itemize}

This ends the proof of Proof of \Cref{thm_int}. \qed

\section{Local controllability for the nonlinear KS equation}\label{sec:nonlinear}
We present briefly the proof of \Cref{th_nl_KS} since it follows the same steps as in \cite[Theorem 1.4]{TT17} for $N=2 \text{ or }3$. The idea is to employ the source term method \cite{Tucsnak-nonlinear} followed by the Banach fixed point theorem to ensure the aforementioned local null controllability result. Let us first assume the constants  $p>0$, $q>1$ in such a way that 
\begin{align}\label{choice-p_q}
	1<q<\sqrt{2}, \ \ \text{and} \ \ p> \frac{q^2}{2-q^2}.
\end{align}
Let $C$ be the positive constant obtained in the control cost estimate (see \Cref{Ln_KSE}). Let us define the functions 

\begin{equation}\label{def_weight_func}
	\rho_0(t)=\left\{\begin{array}{ll} e^{-\frac{pC}{(q-1)(T-t)}} & t \in [0, T), \\ 0 & t=T, \end{array}\right.   \quad 
	\rho_{\F}(t)= \left\{\begin{array}{ll} e^{-\frac{(1+p)q^2 C}{(q-1)(T-t)}} & t \in [0, T), \\ 0 & t=T. \end{array}\right.
\end{equation} 
Note that the functions $\rho_0$ and $\rho_{\F}$ are continuous and non-increasing in $[0,T]$. 
We define the following weighted spaces
\begin{subequations}
	\begin{align}
		\label{space_F}
		&\F:= \left\{f\in L^2(0,T; \mathcal{H}'(\Omega)) \ \Big| \ \frac{f}{\rho_\F} \in L^2(0,T; \mathcal{H}'(\Omega))  \right\}, \\
		\label{space_Y} 
		&	\Y := \left\{ u\in C([0, T]; L^2(\Omega)) \Big| \frac{u}{\rho_0} \in C([0,T]; L^2(\Omega)) \cap L^2(0,T; \mathcal H(\Omega))\right\}, \\
		\label{space_V}
		&	\V:= \left\{ q\in L^2(0,T;L^2(\Gamma))  \ \Big| \ \frac{h}{\rho_0}\in L^2(0,T;L^2(\Gamma))   \right\}.
	\end{align}
\end{subequations}
Let us also define the following
norms for the above weighted spaces
\begin{align*}
	\|f\|_{\F} := \| \rho^{-1}_\F f\|_{ L^2(0, T; L^2(0,1))}, \quad 
	\|h\|_{\V} := \|\rho^{-1}_0 h\|_{L^2(0, T; L^2({\Gamma}))}.
\end{align*}
We have the following result in the same spirit as \cite[Theorem 3.2]{TT17}.
\begin{proposition}\label{Proposition-weighted}
	Let $T>0$.  For any given $f\in \F$ and for any $u_0 \in L^2(\Omega)$, there exists a control $h\in \V$ such that \eqref{lin_NKS_gen} admits a unique solution 
	$u\in \Y$ satisfying $u(T)=0.$
		Further, the solution and the control satisfy
	\begin{multline}\label{estimate-weighted}
		\left\|\frac{u}{\rho_0} \right\|_{C([0,T]; L^2(\Omega))}+\left\|\frac{u}{\rho_0} \right\|_{L^2(0,T;\mathcal H(\Omega))} +\left\|\frac{\pa_t u}{\rho_0} \right\|_{L^2(0,T; \mathcal H'(\Omega))}  + \left\|\frac{q}{\rho_0} \right\|_{L^2((0,T;L^2(\Gamma))}\\
		\leq  Ce^{C(T+\frac{1}{T})}  \left(\|u_0\|_{ L^2(\Omega)} + \left\|{\frac{f}{\rho_\F}}\right\|_{L^2(0,T;\mathcal H'(\Omega))} \right),
	\end{multline}
	where the constant $C>0$ does not depend on $u_0$, $f$, $q$, $T$. 
\end{proposition}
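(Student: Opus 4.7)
\medskip
\noindent\textbf{Proof proposal.} The plan is to adapt the classical source term method of Liu--Takahashi--Tucsnak (as in \cite{Tucsnak-nonlinear}, and exactly as in \cite[Theorem 3.2]{TT17} for the rectangular 2-D case), feeding in the exponent-sharp control cost $\|q\|_{L^2(0,T;L^2(\Gamma))}\le Ce^{C/T}\|u_0\|_{L^2(\Omega)}$ provided by \Cref{Ln_KSE}. By linearity, I would split $(u,q)=(u^{(1)},q^{(1)})+(u^{(2)},q^{(2)})$, where $u^{(1)}$ solves the homogeneous equation with initial datum $u_0$ and is driven to $0$ at time $T$ by $q^{(1)}$ given by \Cref{Ln_KSE} (whose cost already matches the weight $\rho_0$ at $t=0$), while $u^{(2)}$ has zero initial datum, source $f$, and must be driven to $0$ at time $T$. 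All the new work is in constructing $(u^{(2)},q^{(2)})$ together with the weighted bound.

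The core of the argument is an iterative construction on a shrinking partition $0=a_0<a_1<a_2<\cdots\nearrow T$ with $T_k:=a_{k+1}-a_k=T(1-1/q)q^{-k}$, so that $T-a_k=Tq^{-k}$ and $\sum_k T_k=T$. The source is split as $f=\sum_{k\ge 0}f\mathbf 1_{[a_k,a_{k+1}]}$, and for each $k$ I build a block $v_k$ supported on $[a_k,a_{k+2}]$ in two stages:
\begin{itemize}
\item On $[a_k,a_{k+1}]$, $v_k$ solves the uncontrolled equation \eqref{lin_NKS_cyl_homo} with source $f\mathbf 1_{[a_k,a_{k+1}]}$ and $v_k(a_k)=0$; by \Cref{prop:wp_noncontr},
\[
\|z_k\|_{L^2(\Omega)}:=\|v_k(a_{k+1})\|_{L^2(\Omega)}\le Ce^{CT_k}\|f\|_{L^2(a_k,a_{k+1};\mathcal H'(\Omega))}.
\]
\item On $[a_{k+1},a_{k+2}]$, $v_k$ is the solution of the homogeneous equation with boundary control $\widetilde q_k\in L^2(a_{k+1},a_{k+2};L^2(\Gamma))$ that steers $z_k$ to $0$ at time $a_{k+2}$; by \Cref{Ln_KSE} applied on an interval of length $T_{k+1}$,
\[
\|\widetilde q_k\|_{L^2(a_{k+1},a_{k+2};L^2(\Gamma))}\le Ce^{C/T_{k+1}}\|z_k\|_{L^2(\Omega)}.
\]
\end{itemize}
I then set $u^{(2)}:=\sum_k v_k$ and $q^{(2)}:=\sum_k \widetilde q_k\mathbf 1_{[a_{k+1},a_{k+2}]}$. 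By construction $u^{(2)}$ is the solution by transposition of \eqref{lin_NKS_gen} with source $f$, control $q^{(2)}$, and zero initial datum, and $u^{(2)}(T)=0$ follows from $v_k(a_{k+2})=0$ provided the series converges in $\mathcal Y$.

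The hard part is the weighted bookkeeping. Using $T-a_k=Tq^{-k}$, $T_{k+1}=T(1-1/q)q^{-(k+1)}$, and the explicit forms \eqref{def_weight_func}, one finds
\[
\frac{1}{\rho_0(t)}\cdot e^{C/T_{k+1}}\cdot\rho_{\mathcal S}(a_k)\;\le\; C\,\exp\!\Bigl(\tfrac{C}{T}\bigl[\tfrac{q^{k+1}}{1-1/q}-\tfrac{(1+p)q^{k+2}}{q-1}+\tfrac{p\,q^{k+1}}{q-1}\bigr]\Bigr),\qquad t\in[a_{k+1},a_{k+2}],
\]
and the inequality $p>q^2/(2-q^2)$ is exactly what makes the bracketed quantity strictly negative and linear in $q^k$, giving geometric decay in $k$. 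Combining this with the a priori bound for $z_k$ and the weighted inequality $\|f\|_{L^2(a_k,a_{k+1};\mathcal H'(\Omega))}\le \rho_{\mathcal S}(a_{k+1})\|\rho_{\mathcal S}^{-1}f\|_{L^2(0,T;\mathcal H'(\Omega))}$, each $\|\rho_0^{-1}\widetilde q_k\mathbf 1_{[a_{k+1},a_{k+2}]}\|_{L^2}$ and each $\|\rho_0^{-1}v_k\|_{L^\infty(L^2)\cap L^2(\mathcal H)}$ are bounded by $Ce^{C(T+1/T)}q^{-\alpha k}\|\rho_{\mathcal S}^{-1}f\|_{L^2}$ for some $\alpha>0$. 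Summing over $k$, invoking \Cref{prop:wp_noncontr} once more to recover the $\pa_t$ norm in $\mathcal H'(\Omega)$, and adding the contribution of $(u^{(1)},q^{(1)})$ controlled via \Cref{Ln_KSE}, yields \eqref{estimate-weighted}. The main obstacle, beyond the routine assembly, is precisely this exponent arithmetic: the constants $p,q$ from \eqref{choice-p_q} are chosen exactly so that the sharp exponent $C$ in the control cost of \Cref{Ln_KSE} matches the exponent in $\rho_0$ while $\rho_{\mathcal S}$ decays fast enough to absorb both the propagation and the control cost on each successive subinterval.
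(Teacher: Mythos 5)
Your overall route (the Liu--Takahashi--Tucsnak source-term method fed with the cost $Ce^{C/T}$ of \Cref{Ln_KSE}) is exactly the one the paper invokes, since the paper gives no detailed proof and simply refers to \cite{Tucsnak-nonlinear} and \cite{TT17}. However, two steps in your write-up do not work as stated. First, the initial-datum part: applying \Cref{Ln_KSE} once on the whole interval $[0,T]$ gives $u^{(1)}(T)=0$ with cost $Ce^{C/T}\|u_0\|_{L^2(\Omega)}$, but it provides no decay of $u^{(1)}(t)$ as $t\to T^-$ at a rate compatible with $\rho_0(t)=e^{-\frac{pC}{(q-1)(T-t)}}\to 0$, so $\|u^{(1)}/\rho_0\|_{C([0,T];L^2(\Omega))}$ may well be infinite; ``matching the weight at $t=0$'' is not the relevant issue. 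The standard remedy, which is what the source-term method actually does, is to steer $u_0$ to zero on the first subinterval $[a_0,a_1]$ only (cost $Ce^{C/T_0}$, absorbed by $e^{C/T}$) and extend that control by zero, so that $u^{(1)}\equiv 0$ near $T$; equivalently, fold $u_0$ into the iteration as the zeroth block.

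Second, the weighted bookkeeping. On the control interval $[a_{k+1},a_{k+2}]$ the supremum of $1/\rho_0$ is attained at the right endpoint $a_{k+2}$ (you evaluated it at $a_{k+1}$), and on $[a_k,a_{k+1}]$ the supremum of $\rho_{\F}$ is attained at $a_k$ (in the prose you bounded by $\rho_{\F}(a_{k+1})$, which is the wrong direction since $\rho_{\F}$ is non-increasing). With the correct endpoints the total exponent is $\frac{pCq^{k+2}}{(q-1)T}+\frac{Cq^{k+2}}{(q-1)T}-\frac{(1+p)Cq^{k+2}}{(q-1)T}=0$: the weights \eqref{def_weight_func} are chosen critically, and there is no geometric decay in $k$. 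Accordingly, your claim that $p>q^2/(2-q^2)$ is what makes the bracket negative is inaccurate twice over: your bracket is negative for every $p>0$, $q>1$, and the correctly computed one vanishes; condition \eqref{choice-p_q} is really needed only in the nonlinear step, to guarantee that $\rho_0^2/\rho_{\F}$ is bounded so that $F(u)=-\tfrac12|\nabla u|^2$ maps $\mathcal{Y}$ into $\F$. The convergence of your series must therefore come from the structure of the construction rather than from a geometric factor: each block is estimated by the local norm $\|f/\rho_{\F}\|_{L^2(a_k,a_{k+1};\mathcal H'(\Omega))}$, these quantities are square-summable in $k$, the controls $\widetilde q_k$ have disjoint supports and the blocks $v_k$ overlap at most pairwise, so the $L^2$-in-time and $C([0,T];L^2(\Omega))$ norms of $u^{(2)}/\rho_0$ and $q^{(2)}/\rho_0$ are obtained by summing squares and taking suprema. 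With these two repairs your argument does yield \eqref{estimate-weighted}.
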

\subsection{Proof of \Cref{th_nl_KS}}
	Let us consider the map
	$
	\mathcal{F} : f \in \mathcal{S} \ \mapsto\ -F(u) \in \mathcal{S},
	$
	where \((u, q)\) is the solution of \eqref{lin_NKS_gen}, and where \(F\) is defined by
	\[
	F(u) = -\frac{1}{2} |\nabla u|^2.
	\]
	We show that \(\mathcal{F}\) is well-defined and that there exists \(R > 0\) such that
	$
	\mathcal{F}(B(0, R)) \subset B(0, R),
	$
	where $B(0, R)$ is the closed ball of $\mathcal S$ of radius $R$.
	First, using the Sobolev embedding (for dimensions 2 or 3) \(\mathcal H(\Omega) \subset L^\infty(\Omega)\) and an interpolation argument, we deduce that
	\[
	\|F(u)\|_{\mathcal H'(\Omega)} \leq C \|u\|_{H^1(\Omega)}^2 \leq C \|u\|_{\mathcal H(\Omega)} \|u\|_{L^2(\Omega)}.
	\]
	Therefore, using \eqref{estimate-weighted} 
	\[
	\| F(u)\|_{\mathcal S} \leq C \left( \|u_0\|_{L^2(\Omega)}^2 + \|f\|_{\mathcal S}^2 \right).
	\]
	Using this estimate, we can apply the Banach fixed point theorem to conclude $\mathcal F$ has a fixed point and thus we complete the proof \Cref{th_nl_KS}. \qed

\subsection*{Acknowledgments}
The authors acknowledge Luz de Teresa for fruitful discussions.

\bibliographystyle{abbrv}
\bibliography{biblio}	

\begin{thebibliography}{10}

\bibitem{AKBBT11}
F.~Ammar-Khodja, A.~Benabdallah, M.~Gonz\'{a}lez-Burgos, and L.~de~Teresa.
\newblock The {K}alman condition for the boundary controllability of coupled
  parabolic systems. {B}ounds on biorthogonal families to complex matrix
  exponentials.
\newblock {\em J. Math. Pures Appl. (9)}, 96(6):555--590, 2011.

\bibitem{khodja2024new}
F.~Ammar-Khodja, A.~Benabdallah, M.~Gonz{\'a}lez-Burgos, M.~Morancey, and
  L.~de~Teresa.
\newblock New results on biorthogonal families in cylindrical domains and
  controllability consequences.
\newblock \url{https://univ-smb.hal.science/EC-MARSEILLE/hal-04605634v1}, 2024.

\bibitem{AB2014}
A.~Benabdallah, F.~Boyer, M.~Gonz\'{a}lez-Burgos, and G.~Olive.
\newblock Sharp estimates of the one-dimensional boundary control cost for
  parabolic systems and application to the {$N$}-dimensional boundary null
  controllability in cylindrical domains.
\newblock {\em SIAM J. Control Optim.}, 52(5):2970--3001, 2014.

\bibitem{BKR14}
S.~Benachour, I.~Kukavica, W.~Rusin, and M.~Ziane.
\newblock Anisotropic estimates for the two-dimensional
  {K}uramoto-{S}ivashinsky equation.
\newblock {\em J. Dynam. Differential Equations}, 26(3):461--476, 2014.

\bibitem{BS07}
A.~Biswas and D.~Swanson.
\newblock Existence and generalized {G}evrey regularity of solutions to the
  {K}uramoto-{S}ivashinsky equation in {$\Bbb R^n$}.
\newblock {\em J. Differential Equations}, 240(1):145--163, 2007.

\bibitem{Boy23}
F.~Boyer.
\newblock Controllability of linear parabolic equations and systems.
\newblock {L}ecture notes,
  \url{https://hal.archives-ouvertes.fr/hal-02470625v4}, 2023.

\bibitem{BO24}
F.~Boyer and G.~Olive.
\newblock Boundary null controllability of some multi-dimensional linear
  parabolic systems by the moment method.
\newblock {\em Ann. Inst. Fourier (Grenoble)}, 74(5):1943--2012, 2024.

\bibitem{C07}
E.~Cerpa.
\newblock Exact controllability of a nonlinear {K}orteweg-de {V}ries equation
  on a critical spatial domain.
\newblock {\em SIAM J. Control Optim.}, 46(3):877--899, 2007.

\bibitem{EC10}
E.~Cerpa.
\newblock Null controllability and stabilization of the linear
  {K}uramoto-{S}ivashinsky equation.
\newblock {\em Commun. Pure Appl. Anal.}, 9(1):91--102, 2010.

\bibitem{CC09}
E.~Cerpa and E.~Cr\'{e}peau.
\newblock Boundary controllability for the nonlinear {K}orteweg-de {V}ries
  equation on any critical domain.
\newblock {\em Ann. Inst. H. Poincar\'{e} C Anal. Non Lin\'{e}aire},
  26(2):457--475, 2009.

\bibitem{EC17}
E.~Cerpa, P.~Guzm\'{a}n, and A.~Mercado.
\newblock On the control of the linear {K}uramoto-{S}ivashinsky equation.
\newblock {\em ESAIM Control Optim. Calc. Var.}, 23(1):165--194, 2017.

\bibitem{EC11}
E.~Cerpa and A.~Mercado.
\newblock Local exact controllability to the trajectories of the 1-{D}
  {K}uramoto-{S}ivashinsky equation.
\newblock {\em J. Differential Equations}, 250(4):2024--2044, 2011.

\bibitem{CEE93}
P.~Collet, J.-P. Eckmann, H.~Epstein, and J.~Stubbe.
\newblock Analyticity for the {K}uramoto-{S}ivashinsky equation.
\newblock {\em Phys. D}, 67(4):321--326, 1993.

\bibitem{Cor07}
J.-M. Coron.
\newblock {\em Control and nonlinearity}, volume 136 of {\em Mathematical
  Surveys and Monographs}.
\newblock American Mathematical Society, Providence, RI, 2007.

\bibitem{CL15}
J.-M. Coron and Q.~L\"{u}.
\newblock Fredholm transform and local rapid stabilization for a
  {K}uramoto-{S}ivashinsky equation.
\newblock {\em J. Differential Equations}, 259(8):3683--3729, 2015.

\bibitem{CDF21}
M.~Coti~Zelati, M.~Dolce, Y.~Feng, and A.~L. Mazzucato.
\newblock Global existence for the two-dimensional {K}uramoto-{S}ivashinsky
  equation with a shear flow.
\newblock {\em J. Evol. Equ.}, 21(4):5079--5099, 2021.

\bibitem{D73}
S.~Dolecki.
\newblock Observability for the one-dimensional heat equation.
\newblock {\em Studia Math.}, 48:291--305, 1973.

\bibitem{ELW24}
M.~Enlow, A.~Larios, and J.~Wu.
\newblock Algebraic calming for the 2{D} {K}uramoto-{S}ivashinsky equations.
\newblock {\em Nonlinearity}, 37(11):Paper No. 115019, 34, 2024.

\bibitem{Fattorini-Russell-1}
H.~O. Fattorini and D.~L. Russell.
\newblock Uniform bounds on biorthogonal functions for real exponentials with
  an application to the control theory of parabolic equations.
\newblock {\em Quart. Appl. Math.}, 32:45--69, 1974/75.

\bibitem{FM22}
Y.~Feng and A.~L. Mazzucato.
\newblock Global existence for the two-dimensional {K}uramoto-{S}ivashinsky
  equation with advection.
\newblock {\em Comm. Partial Differential Equations}, 47(2):279--306, 2022.

\bibitem{FI96}
A.~V. Fursikov and O.~Y. Imanuvilov.
\newblock {\em Controllability of evolution equations}, volume~34 of {\em
  Lecture Notes Series}.
\newblock Seoul National University, Research Institute of Mathematics, Global
  Analysis Research Center, Seoul, 1996.

\bibitem{GO05}
L.~Giacomelli and F.~Otto.
\newblock New bounds for the {K}uramoto-{S}ivashinsky equation.
\newblock {\em Comm. Pure Appl. Math.}, 58(3):297--318, 2005.

\bibitem{Goo94}
J.~Goodman.
\newblock Stability of the {K}uramoto-{S}ivashinsky and related systems.
\newblock {\em Comm. Pure Appl. Math.}, 47(3):293--306, 1994.

\bibitem{Gru00}
Z.~Gruji\'{c}.
\newblock Spatial analyticity on the global attractor for the
  {K}uramoto-{S}ivashinsky equation.
\newblock {\em J. Dynam. Differential Equations}, 12(1):217--228, 2000.

\bibitem{GK19}
S.~Guerrero and K.~Kassab.
\newblock Carleman estimate and null controllability of a fourth order
  parabolic equation in dimension {$N\geq 2$}.
\newblock {\em J. Math. Pures Appl. (9)}, 121:135--161, 2019.

\bibitem{HSMdT25}
V.~Hern\'{a}ndez-Santamar\'{\i}a, S.~Majumdar, and L.~de~Teresa.
\newblock Boundary null controllability of a class of 2-{D} degenerate
  parabolic {PDE}s.
\newblock {\em Discrete Contin. Dyn. Syst.}, 45(12):5107--5153, 2025.

\bibitem{HN85}
J.~M. Hyman and B.~Nicolaenko.
\newblock The {K}uramoto-{S}ivashinsky equation: a bridge between {PDE}s and
  dynamical systems.
\newblock volume~18, pages 113--126. 1986.
\newblock Solitons and coherent structures (Santa Barbara, Calif., 1985).

\bibitem{IS16}
X.~Ioakim and Y.-S. Smyrlis.
\newblock Analyticity for {K}uramoto-{S}ivashinsky-type equations in two
  spatial dimensions.
\newblock {\em Math. Methods Appl. Sci.}, 39(8):2159--2178, 2016.

\bibitem{KTZ18}
A.~Kostianko, E.~Titi, and S.~Zelik.
\newblock Large dispersion, averaging and attractors: three 1{D} paradigms.
\newblock {\em Nonlinearity}, 31(12):R317--R350, 2018.

\bibitem{KM23}
I.~Kukavica and D.~Massatt.
\newblock On the global existence for the {K}uramoto-{S}ivashinsky equation.
\newblock {\em J. Dynam. Differential Equations}, 35(1):69--85, 2023.

\bibitem{KT75}
Y.~Kuramoto and T.~Tsuzuki.
\newblock On the formation of dissipative structures in reaction-diffusion
  systems: Reductive perturbation approach.
\newblock {\em Progress of Theoretical Physics}, 54(3):687--699, 1975.

\bibitem{KT76}
Y.~Kuramoto and T.~Tsuzuki.
\newblock Persistent propagation of concentration waves in dissipative media
  far from thermal equilibrium.
\newblock {\em Progress of theoretical physics}, 55(2):356--369, 1976.

\bibitem{LK20}
A.~Larios and K.~Yamazaki.
\newblock On the well-posedness of an anisotropically-reduced two-dimensional
  {K}uramoto-{S}ivashinsky equation.
\newblock {\em Phys. D}, 411:132560, 14, 2020.

\bibitem{LRR20}
J.~Le~Rousseau and L.~Robbiano.
\newblock Spectral inequality and resolvent estimate for the bi-{L}aplace
  operator.
\newblock {\em J. Eur. Math. Soc. (JEMS)}, 22(4):1003--1094, 2020.

\bibitem{LR}
G.~Lebeau and L.~Robbiano.
\newblock Contróle exact de léquation de la chaleur.
\newblock {\em Communications in Partial Differential Equations},
  20(1-2):335--356, 1995.

\bibitem{LM72}
J.-L. Lions and E.~Magenes.
\newblock {\em Non-homogeneous boundary value problems and applications. {V}ol.
  {II}}.
\newblock Die Grundlehren der mathematischen Wissenschaften, Band 182.
  Springer-Verlag, New York-Heidelberg, 1972.
\newblock Translated from the French by P. Kenneth.

\bibitem{L17}
P.~Lissy.
\newblock The cost of the control in the case of a minimal time of control: the
  example of the one-dimensional heat equation.
\newblock {\em J. Math. Anal. Appl.}, 451(1):497--507, 2017.

\bibitem{Tucsnak-nonlinear}
Y.~Liu, T.~Takahashi, and M.~Tucsnak.
\newblock Single input controllability of a simplified fluid-structure
  interaction model.
\newblock {\em ESAIM Control Optim. Calc. Var.}, 19(1):20--42, 2013.

\bibitem{NST85}
B.~Nicolaenko, B.~Scheurer, and R.~Temam.
\newblock Some global dynamical properties of the {K}uramoto-{S}ivashinsky
  equations: nonlinear stability and attractors.
\newblock {\em Phys. D}, 16(2):155--183, 1985.

\bibitem{Ott09}
F.~Otto.
\newblock Optimal bounds on the {K}uramoto-{S}ivashinsky equation.
\newblock {\em J. Funct. Anal.}, 257(7):2188--2245, 2009.

\bibitem{RR04}
M.~Renardy and R.~C. Rogers.
\newblock {\em An introduction to partial differential equations}, volume~13 of
  {\em Texts in Applied Mathematics}.
\newblock Springer-Verlag, New York, second edition, 2004.

\bibitem{R73}
D.~L. Russell.
\newblock A unified boundary controllability theory for hyperbolic and
  parabolic partial differential equations.
\newblock {\em Studies in Appl. Math.}, 52:189--211, 1973.

\bibitem{S15}
E.~H. Samb.
\newblock Internal null-controllability of the {$N$}-dimensional heat equation
  in cylindrical domains.
\newblock {\em C. R. Math. Acad. Sci. Paris}, 353(10):925--930, 2015.

\bibitem{ST92}
G.~R. Sell and M.~Taboada.
\newblock Local dissipativity and attractors for the {K}uramoto-{S}ivashinsky
  equation in thin {$2{\rm D}$} domains.
\newblock {\em Nonlinear Anal.}, 18(7):671--687, 1992.

\bibitem{Siv77}
G.~I. Sivashinsky.
\newblock Nonlinear analysis of hydrodynamic instability in laminar flames.
  {I}. {D}erivation of basic equations.
\newblock {\em Acta Astronaut.}, 4(11-12, 11):1177--1206, 1977.

\bibitem{Tad86}
E.~Tadmor.
\newblock The well-posedness of the {K}uramoto-{S}ivashinsky equation.
\newblock {\em SIAM J. Math. Anal.}, 17(4):884--893, 1986.

\bibitem{TT17}
T.~Takahashi.
\newblock Boundary local null-controllability of the {K}uramoto-{S}ivashinsky
  equation.
\newblock {\em Math. Control Signals Systems}, 29(1):Art. 2, 21, 2017.

\end{thebibliography}

%\bigskip \bigskip \bigskip 

%\smallskip
					
\begin{flushleft}

\textbf{Víctor Hernández-Santamaría and Subrata Majumdar}\\
Instituto de Matemáticas\\
Universidad Nacional Autónoma de México \\
Circuito Exterior, Ciudad Universitaria\\
04510 Coyoacán, Ciudad de México, Mexico\\
\texttt{victor.santamaria@im.unam.mx \quad  subrata.majumdar@im.unam.mx}\\
\end{flushleft}

\end{document}